\documentclass[reqno, 12pt]{amsart}
\usepackage{a4wide, amsfonts, amssymb, amsmath, amsthm, mathrsfs, enumerate, enumitem, setspace, url}
\usepackage[utf8]{inputenc}

\usepackage{graphicx}
\usepackage[dvipsnames]{xcolor}
\usepackage{tikz}
\usetikzlibrary{positioning}
\usetikzlibrary{intersections}
\usetikzlibrary{calc}
\usetikzlibrary{decorations.markings}
\usetikzlibrary {shapes.geometric}
\usepackage{tikz-cd} %diagrams
	\usetikzlibrary{cd}
	\usetikzlibrary{graphs}
\usepackage[all]{xy}
\usepackage{mathtools}

\usepackage{color}
\usepackage{algpseudocode}
\usepackage{hyperref} %hyperlinks
	\definecolor{darkred}{rgb}{0.5,0,0}
	\definecolor{darkgreen}{rgb}{0,0.5,0}
	\definecolor{darkblue}{rgb}{0,0,0.5}
	\hypersetup{linkcolor=darkblue,filecolor=darkgreen,urlcolor=darkred,citecolor=darkblue}

\usepackage{comment}
\usepackage{longtable}
\usepackage[T2A,T1]{fontenc}
\DeclareSymbolFont{cyrillic}{T2A}{cmr}{m}{n}
\DeclareMathSymbol{\Sha}{\mathalpha}{cyrillic}{216}

\usepackage[toc,page]{appendix}

\allowdisplaybreaks
\theoremstyle{plain}
\newtheorem{theorem}{Theorem}[section]
\newtheorem*{theorem*}{Theorem}
\newtheorem{proposition}[theorem]{Proposition}
\newtheorem{lemma}[theorem]{Lemma}
\newtheorem{corollary}[theorem]{Corollary}

\theoremstyle{remark}
\newtheorem{remark}[theorem]{Remark}
\newtheorem{example}[theorem]{Example}

\newtheorem*{acknowledgements}{Acknowledgements}

\theoremstyle{definition}

\newtheorem{definition}[theorem]{Definition}

\numberwithin{equation}{section}
    \DeclareFontFamily{U}{wncy}{}
    \DeclareFontShape{U}{wncy}{m}{n}{<->wncyr10}{}
    \DeclareSymbolFont{mcy}{U}{wncy}{m}{n}
    \DeclareMathSymbol{\Sh}{\mathord}{mcy}{"58} 
\DeclareMathOperator{\Image}{Im}
\DeclareMathOperator{\Real}{Re}
\newcommand{\NN}{\mathbb{N}}
\newcommand{\Z}{\mathbb{Z}}

\newcommand{\ZZ}{\mathbb{Z}}
\newcommand{\QQ}{\mathbb{Q}}

\newcommand{\RR}{\mathbb{R}}
\newcommand{\CC}{\mathbb{C}}

\newcommand{\AS}{\ensuremath{\mathbf{A}}}% adeles

\renewcommand{\ge}{\geqslant}
\renewcommand{\le}{\leqslant}
\newcommand{\field}[1]{\mathbb{#1}}
\newcommand{\gal}{\mathrm{Gal}}

\newcommand{\F}{\field{F}}

\newcommand{\kk}{\mathit{k}_v^*}

\newcommand{\OO}{\mathcal{O}_v^*}
\newcommand{\OOO}{\mathcal{O}_v}
\newcommand{\OOone}{\mathcal{O}_{v,1}^*}
\newcommand{\OOtwo}{\mathcal{O}_{v,2}^*}

\newcommand{\OOjs}{\mathcal{O}_{v,j-1}^*}
\newcommand{\OOj}{\mathcal{O}_{v,j}^*}
\newcommand{\OOjj}{\mathcal{O}_{v,j+1}^*}

\newcommand{\Frob}{\operatorname{Frob}}

\usepackage{faktor}
\usepackage[all]{xy}

\DeclareMathOperator{\Gal}{Gal}

\DeclareMathOperator{\Hom}{Hom}

\renewcommand{\epsilon}{\varepsilon}

\begin{document}
\onehalfspacing
\title{Equidistribution for abelian extensions of global fields}

\author[Jiazhi He]{ 
Jiazhi He}
	\address{Jiazhi He, Department of Mathematics, University of Bath, Bath, BA2 7AL, UK.}
	\email{jh3671@bath.ac.uk}

%\date{\today}
%\newcommand{\creflastconjunction}{, and\nobreakspace}
%\thanks{2020 {\em Mathematics Subject Classification} 
%	 ??.
%}
\begin{abstract}
We establish asymptotic formulas for abelian extensions of global function fields ordered by conductor and subject to prescribed local conditions.  Our proof combines harmonic analysis with a theory of frobenian functions over global function fields developed in this paper. We interpret our result via equidistribution on algebraic stacks. 
\end{abstract}
\maketitle
\setcounter{tocdepth}{1}
\tableofcontents
\section{Introduction}
In \cite{Malle2002} and \cite{Malle2004}, Malle proposed a conjecture on the asymptotics for the counting function for extensions over number fields with fixed Galois group and bounded norm of the discriminant. Later, Bhargava, in \cite[Sec.~8.2]{10.1093/imrn/rnm052}, proposed a heuristic for refining this counting problem,  where he counts global fields imposing local specifications and calculates the quotient of this count to the total count. According to the terminology from \cite[\S 9]{loughran2025mallesconjecturebrauergroups}, where the authors developed a geometric framework for counting field extensions, such questions are referred to as \textit{equidistribution}. Wood studied equidistribution over abelian number fields in  \cite{MR2581243}. 

In this paper, we study equidistribution for abelian extensions over global function fields with characteristic $p$. For a non-trivial finite abelian group $G$, we choose a presentation  $G = G_w\times G_t$, where $G_w = \prod_{m=1}^e C_{p^m}^{r_m}$ is the $p$-primary part of $G$. We say that $G$ is \textit{tame} if the wild part $G_w$ is trivial and that $G$ is \textit{wild} if the tame part $G_t$ is trivial.  Let $0\leq f \leq e$ denote the minimal integer such that $p^fG_w$ is cyclic. 

Let $F$ be a field with separable closure $\Bar{F}$. We define a \textit{sub-$G$-extension} of $F$ as a continuous homomorphism $\Gamma_F:=\gal (\Bar{F}/F) \to G.$ A  \emph{$G$-extension} is a surjective  sub-$G$-extension, and we write $G\text{-}\operatorname{ext}(F)$ for the set of $G$-extensions of $F$. After fixing compatible embeddings, a sub-$G$-extension $\varphi$ of $k$ will induce a sub-$G$-extension $\varphi_v$ of $k_v$, where $k_v$ is a completion of $k$ at the place $v$. Let $W \subseteq \prod_v\Hom(\Gamma_{k_v},G)$.  By abuse of notation we write $\varphi \in W$ if $ \Image (\varphi) \in W$ under the map $ \Hom(\Gamma_{k},G) \to \prod_v\Hom(\Gamma_{k_v},G)$.   Let $\Phi(\varphi)$ denote the norm of the conductor of the $G$-extension $\varphi \in \operatorname{Hom}(\Gamma_k, G)$. We are interested in the counting function
\begin{equation}
    N(k,G,W,q^M) := {\#\left\{\varphi \in  G-\operatorname{ext}(k) : 
    \varphi \in W,  
     \Phi(\varphi) = q^{M}
    \right\}}. 
\end{equation}
The function field setting exhibits several new phenomena absent in number fields.
In particular, the presence of constant field extensions introduces additional poles in the associated Dirichlet series, and the wild part of the Galois group interacts subtly with the analytic behavior of the generating functions. Therefore, we will state our results using an averaged version of $ N(k,G,W,q^M)$:
\begin{equation}
    N_{av}(k,G,W,q^M) := \frac{1}{|B(k,G)|}\sum_{j = 0}^{|B(k,G)|-1} \frac{N(k,G,W,q^{M+j})}{q^{a(G) j}},
\end{equation}
where $a(G) =p^{-e}(1+ \sum_{m=1}^er_m(p^e- p^{e-m}))$, and
\begin{equation}\label{Eq:formulaforB}
    |B(k,G)| = 
    \begin{cases}
        \gcd\Bigl(\{j : 2\leq j \leq p^e\}\cup\bigl\{[k(\mu_\ell):k] : \ell \text{ prime},\ \ell \mid |G_t|\bigr\}\Bigr), &\text{ if $G_w$ is cyclic,}\\
        p^f, &\text{ if $G_w$ is not cyclic},
    \end{cases}
\end{equation}
where $k(\mu_\ell)/k$ denotes the constant extension obtained by adjoining the $\ell$-th roots of unity to $k$. In the first case, the set $\{j : 2\leq j \leq p^e\}$ is empty when $G_w$ is trivial, the second set is empty when $G_t$ is trivial, and the union is non-empty as $G$ is non-trivial.

In particular, $|B(k,G)|=1$ when $G_w$ is cyclic, non-trivial and $p^e\geq 3$; and $|B(k,G)|=\gcd\{[k(\mu_\ell):k] : \ell\mid|G_t|\}$ when $G$ is tame.

We use $N(k,G,q^M)$ and $ N_{av}(k,G,q^M) $ respectively for the case $W = \prod_v\Hom(\Gamma_{k_v},G)$.
Note that $N_{av}(k, G, W, q^M)$  depends on the number $|B(k, G)|$ which may appear ad hoc; this is a new feature over global function fields. See Remark~\ref{Remark: periodofpole} for details.

Now we are in a position to state our first version of the equidistribution theorem.
\begin{theorem}\label{easyED}
Let $k$ be a global function field, $G$ be a finite abelian group and let $S$ be a finite set of places of $k$. For $ \psi_v \in \Hom (\Gamma_{k_v},G)$ with $v \in S$, let $W = \prod_{v \in S}\{\psi_v\} \times \prod_{v \notin S}\Hom(\Gamma_{k_v},G)$. Let $a(G):=p^{-e}(1+ \sum_{m=1}^er_m(p^e- p^{e-m}))$. Then
\begin{equation*}
    \lim_{M \to \infty}\frac{N_{av}(k,G,W,q^M)}{N_{av}(k,G,q^M)} =\prod_{v \in S} \frac{\frac{1}{\Phi_v(\psi_v)^{a(G)}}}{\left( \sum_{\chi_v \in \operatorname{Hom}(k_v^*,G)} \frac{1}{\Phi(\chi_v)^{a(G)}}\right)}.
\end{equation*}
\end{theorem}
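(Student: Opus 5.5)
We will derive Theorem~\ref{easyED} from an asymptotic formula for $N_{av}(k,G,W,q^M)$ for any local specification $W$ of the stated shape. The method is harmonic analysis on the idelic side. By class field theory, sub-$G$-extensions of $k$ correspond to continuous homomorphisms $\AS^*/k^* \to G$. The local ones correspond to $\Hom(k_v^*,G)$, and the conductor is multiplicative over places. So we form the generating series
\[
F_W(t)=\sum_{\varphi\in\Hom(\AS^*/k^*,G),\ \varphi\in W}t^{\deg \Phi(\varphi)}.
\]
Here $t=q^{-s}$. We use Poisson summation on the finite group $\Hom(\AS^*/\mathcal{O}^{*,S'}k^*,G)$ for a suitable large finite set $S'$, or equivalently we detect $k^*$-triviality via characters of $\AS^*/k^*$ with values in $\widehat G$. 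This writes $F_W$ as a finite sum over characters $\chi$ (with $\chi$ ranging over the dual of $\AS^*/k^*\otimes$ things) of Euler products
\[
\prod_{v\in S}\Bigl(\sum_{\chi_v}\mathbf 1_{\chi_v=\psi_v}\,\chi(\chi_v)\,t^{\deg\Phi_v(\chi_v)}\Bigr)\cdot\prod_{v\notin S}\Bigl(\sum_{\chi_v}\chi(\chi_v)t^{\deg \Phi_v(\chi_v)}\Bigr).
\]

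Next we analyse each Euler product via the frobenian function theory developed in the paper. At unramified places the local factor is $1+(\text{frobenian function of }v)\,t^{a\deg v}+\dots$. The key point is that the leading poles on the circle $|t|=q^{-a(G)}$ are of the form $(1-(q^{a(G)}t)^{|B(k,G)|})^{-b}$, with $b$ maximal. These come from the trivial character together with those characters that are twisted by constant field extensions; the constant field twists are exactly what produce the period $|B(k,G)|$. The averaging in $N_{av}$ kills the oscillating contributions from the $|B(k,G)|$-th roots of unity. We then get
\[
N_{av}(k,G,W,q^M)=c_W\,q^{a(G)M}M^{b-1}(1+o(1)).
\]
The constant $c_W$ is a sum over the characters contributing maximal poles. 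For each such character, the factor at $v\in S$ is replaced by the restricted local sum. Passing from sub-$G$-extensions to surjective ones (Möbius inversion over subgroups) changes only lower-order terms, since proper subgroups have smaller $a$ or smaller $b$.

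The crux, and the main obstacle, is to show that only the trivial character contributes to the leading constant, or that the contributing nontrivial characters are trivial on the local groups at $S$. In function fields this is delicate because the constant-field characters are unramified everywhere. They are therefore nontrivial on $\psi_v$ possibly only through the value at a uniformiser, which would break the naive product formula. I would handle this using that $N_{av}$ averages over $|B(k,G)|$ consecutive degrees. Thus only poles at $t=q^{-a(G)}$ itself survive, and the characters producing poles at other points of the circle are discarded. For those poles at $q^{-a(G)}$ which come from characters nontrivial at $S$, one shows that their leading coefficient vanishes (a Wood-type ``fair counting'' argument, relying on conductor rather than discriminant). Given this, we have $c_W=c\prod_{v\in S}\Phi_v(\psi_v)^{-a(G)}/\sum_{\chi_v}\Phi(\chi_v)^{-a(G)}$. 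This holds because replacing the full local factor at $t=q^{-a(G)}$ by the single term $\psi_v$ exactly scales the residue. Dividing by the case $S=\emptyset$ gives the theorem.
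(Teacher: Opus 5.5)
Your outline follows the paper's route. In the paper, Theorem~\ref{easyED} is read off from the explicit constant in Theorem~\ref{maintheorem}. That constant comes from Poisson summation over the finitely many $x\in\mathcal{O}_S^*\otimes G^{\wedge}$ (Proposition~\ref{PossionS:the prop}), M\"obius inversion \eqref{removesubgroups} and averaging. In it, $W$ enters only through the factors $\Phi_v(\psi_v)^{-a(G)}/|G|$ of $D(k,G,W;a(G))$.

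\textbf{The main gap.} The step you yourself call the crux is not proved. You offer only ``one shows that their leading coefficient vanishes (a Wood-type fair counting argument)'', and that is where the content of the theorem lies.

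For $x\neq\mathbf 1$, the local factor at $v\in S$ is $\frac{1}{|G|}\langle\psi_v,x_v\rangle\Phi_v(\psi_v)^{-s}$, where $x_v$ is the image of $x$ in $k_v^*\otimes G^{\wedge}$. For a constant $x\in\F_q^*\otimes G^{\wedge}$ this is $\psi_v$ evaluated on a constant unit, not at a uniformiser. It is non-trivial for suitable tamely ramified $\psi_v$. So if some $x\neq\mathbf 1$ contributed to the coefficient of $(s-a(G))^{-b(k,G)}$, the product formula for $c_W$ would fail.

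The paper's mechanism is not a vanishing of coefficients. It is a strict drop in pole order at $s=a(G)$ (Proposition~\ref{which_cha_contri}\textit{(1)}).
\begin{itemize}
\item For the tame component, the Euler product is $\zeta_k(s)^{-1}\prod_v(1+\rho_x(v)q_v^{-s})$ up to harmless factors, with $\rho_x$ the frobenian function \eqref{thehomfrob}. Proposition~\ref{Prop:Frob_func_into_zeta} gives the product pole order $m(\rho_x)$ at $s=1$. Moreover $m(\rho_x)<m(\rho_{\mathbf 1})$ unless $x_t$ is locally trivial at almost all places. By Chebotarev and Grunwald--Wang (which has no exceptional case over global function fields), that forces $x_t=\mathbf 1$.
\item For the wild component, Lemma~\ref{lem: generalized-ABdul-Pth-valuation} pins down the level of $x_v$ in the filtration by the $\mathcal{O}_{v,h}^*\otimes G^{\wedge}$ for almost all $v$. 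The local transform is then the truncated one of Lemma~\ref{LocalFT}. For $x_w\neq\mathbf 1$ that level is below $p^e$, so at least the top layer $j=p^e$ is lost, and by Lemma~\ref{lagemanncalcualtion} that layer carries a pole on $\Real(s)=a(G)$.
\end{itemize}
Wood's argument \cite{MR2581243} is for number fields, where every $\Hom(k_v^*,G)$ is finite. It has to be redone with these inputs, and your proposal does not do this.

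\textbf{The averaging step.} This is also asserted rather than proved. The claim that only poles at $t=q^{-a(G)}$ survive needs every pole of order $b(k,G)$ on $|t|=q^{-a(G)}$, from every $x$, to sit at a $|B(k,G)|$-th root of unity times $q^{-a(G)}$.

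In the paper, the period already comes from the $x=\mathbf 1$ term. It arises through the constant-field factors $\zeta_{k(\mu_\alpha)}(js-\cdots)$, whose poles are spaced by $2\pi i/(j[k(\mu_\alpha):k]\log q)$ (Lemma~\ref{structure of Lambda}, Corollary~\ref{infoaboutpoles}). The other $x$ are discarded via Proposition~\ref{which_cha_contri}\textit{(1)}. Away from $s=a(G)$, however, the factor $\zeta_k(s)^{-1}$ no longer lowers the order for $x\neq\mathbf 1$, so this must actually be computed.

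Here is a concrete case. Take $G=C_2$ with $p$ odd, and $x=c\otimes\chi$ with $c\in\F_q^*\setminus\F_q^{*2}$. Then $\widehat f_{W,G}(x;s)$ is $\mathrm{L}(\chi_c,s)$, for the constant quadratic character $\chi_c$, times a function holomorphic and non-zero at $s=1+\pi i/\log q$. It therefore has a simple pole there, of the same order $b(k,C_2)=1$ as the trivial term. Consistently, Riemann--Hurwitz forces every quadratic extension to have conductor of even degree. But \eqref{Eq:formulaforB} gives $|B(k,C_2)|=1$, so averaging removes nothing in this case, and both counts in Theorem~\ref{easyED} vanish for all odd $M$.

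A complete argument along your lines therefore has to determine the maximal-order poles over all $x$. It cannot simply assume the shape $(1-(q^{a(G)}t)^{|B(k,G)|})^{-b}$.
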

\begin{remark}
Given the positivity of the above quotient, we have that, for a collection of local $G$-extensions $W = \prod_{v \in S}\{\psi_v\} \times \prod_{v \notin S}\Hom(\Gamma_{k_v},G)$, there exists a global $G$-extension $\varphi$, such that $\varphi_v = \psi_v$ for all $v \in S$. This result can also be found in \cite[Thm.~9.2.8]{CohoNF}.
    
    This stands in sharp contrast to the number field case, where the Grunwald--Wang theorem asserts that there is no $\mathbb{Z}/8\mathbb{Z}$-extension of $\mathbb{Q}$ realizing the degree-8 unramified extension of $\mathbb{Q}_2$; see page 1 of \cite{Wang1948} for details. 
    If $G$ is purely wild, we have a new proof of this fact by counting. If $G$ is tame, our result requires a fact equivalent to Grunwald--Wang, namely, if $x \in k^*$ is a $n$-th power almost everywhere locally, then $x$ is a $n$-th power itself.
\end{remark}
\begin{theorem}\label{maintheorem} Let $k$ be a global function field with characteristic  $p$ and let $G$ be a finite abelian group. Write $G = G_w\times G_t$, where $G_w = \prod_{m=1}^e C_{p^m}^{r_m}$ is the $p$-primary part of $G$ and $e$ is the exponent of $G_w$. If $G_w$ is not cyclic, let $0\leq f \leq e$ denote the minimal integer such that $p^fG_w$ is cyclic.  For $ \psi_v \in \Hom (\Gamma_{k_v},G)$ with $v \in S$, let $W = \prod_{v \in S}\{\psi_v\} \times \prod_{v \notin S}\Hom(\Gamma_{k_v},G)$. For any positive integer $ \alpha \geq 1$ , let $k(\mu_\alpha)/k$ be the constant extension by adjoining $\alpha$-th roots of unity 
    to $k$, define $d(k,\alpha) = \frac{r(\alpha)\phi(\alpha)}{[k(\mu_\alpha):k]}$, where $r(\alpha)$ is the number of cyclic subgroups with order $\alpha$ in $G_t$.
    Let
     \begin{equation}\label{bG}
        b(k,G) = \begin{cases}
        p^e\left( \sum_{\alpha\,\mid \, |G_t|}d(k,\alpha)\right) - 1 , &\text{if $G_w$ is cyclic,}
            \\
        \left( \sum_{\alpha\,\mid \, |G_t|}d(k,\alpha)\right)p^{e-f}, &\text{otherwise.}\end{cases}
    \end{equation}
Then we have  
$$
N_{av}(k,G,W,q^M)\sim q^{a(G)M}c(k,G,W) M^{b(k,G)-1} , \quad \text { as } M \rightarrow \infty \text { and } M \in\Z, \text{ where}$$
\begin{align*}
    c(k,G,W) &= \frac{\log (q)^{b(k,G)}}{(b(k,G)-1)!}\frac{ |G|}{|\F_q^* \otimes G|}\left(\prod_{\substack{1\leq j \leq p^e \\ p^f\mid j }}\frac{1}{j}\right)^{\left( \sum_{\alpha\,\mid \, |G_t|}d(k,\alpha)\right)}
    \\ &\times\left(\operatorname{Res}_{s = 1}\mathrm{L}(M_G,s)\right)\prod_{v \in S}\frac{\lambda_v^{-1}}{|G|\Phi_v(\psi_v)^a}\prod_{v \notin S}\left( \sum_{\chi_v \in \operatorname{Hom}(k_v^*,G)} \frac{1}{\Phi(\chi_v)^a}\right)\lambda_v^{-1}.
\end{align*}
Here, the infinite product converges absolutely and   
$\mathrm{L}(M_G, s)$ denotes the Artin $L$-function with corresponding local Euler factors $\mathrm{L}_v(M_G, s)$ where

\begin{equation}\label{GaloisModule}
    M_G= \begin{cases}
\left(\CC^{d(k,\alpha)}\right)^{p^e-1}\oplus\CC^{d(k,\alpha)-1}\oplus \left(\oplus_{\substack{ \alpha\,\mid \, |G_t|\\\alpha \neq 1}}\operatorname{Ind}_{k}^{k(\mu_\alpha)}(\CC) 
 ^{d(k,\alpha)}\right)^{p^{e}}, &\text{  $G_w$ cyclic,}
 \\\left(\oplus_{\alpha\,\mid \, |G_t|}\operatorname{Ind}_{k}^{k(\mu_\alpha)}(\CC) 
 ^{d(k,\alpha)}\right)^{p^{e-f}} , &\text{otherwise,}
\end{cases}
\end{equation}
and
$
\lambda_v=\mathrm{L}_v(M_G, 1).
$
\end{theorem}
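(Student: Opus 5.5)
We follow the harmonic-analysis approach of Wood and of Frei--Loughran--Newton, adapted to function fields. By class field theory, sub-$G$-extensions of $k$ correspond to continuous homomorphisms $\mathbf{A}^*/k^* \to G$, and local ones to $\operatorname{Hom}(k_v^*,G)$. The conductor is a product of local conductors $\Phi_v$, so we form the height zeta function
\[
F_W(s)=\sum_{\varphi\in \operatorname{Hom}(\mathbf{A}^*/k^*,G),\ \varphi\in W}\Phi(\varphi)^{-s},
\]
a power series in $t=q^{-s}$. We apply Poisson summation on the compact group $\operatorname{Hom}(\mathbf{A}^*,G)$ with respect to the discrete subgroup dual to $k^*\otimes \widehat G$; the finiteness of the relevant class group is what produces the factor $|G|/|\F_q^*\otimes G|$. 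This writes
\[
F_W(s)=\frac{|G|}{|\F_q^*\otimes G|}\sum_{x}\widehat{f}_W(x;s),
\]
where $x$ ranges over $k^*\otimes\widehat G$ (elements of $k^*/k^{*n}$ paired with characters of $G$), and $\widehat f_W(x;s)=\prod_v\widehat f_{W,v}(x_v;s)$ is an Euler product of local Fourier transforms
\[
\widehat f_{W,v}(x_v;s)=\sum_{\chi_v\in\operatorname{Hom}(k_v^*,G)}\frac{\langle\chi_v,x_v\rangle}{\Phi_v(\chi_v)^s},
\]
restricted to $\chi_v=\psi_v$ when $v\in S$. Surjectivity onto $G$ is then handled by Möbius inversion over subgroups $H\le G$. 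The subgroups $H<G$ have either smaller $a(H)$ or smaller $b$, so they contribute lower order terms.

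The second step analyses each Euler factor at a place $v\notin S$ with $q_v$ large. For the trivial character $x=1$, the coefficient of $q_v^{-s}$ in $\widehat f_v$ is the number of tamely ramified $\chi_v$ with conductor exponent one. This number is $|\operatorname{Hom}(\mathcal{O}_v^*,G_t)|$ restricted to order-$\alpha$ elements, and it depends on $q_v \bmod \alpha$, i.e.\ on the splitting of $v$ in $k(\mu_\alpha)$. It is a frobenian function, and by the theory of frobenian functions developed earlier, its Euler product behaves like $\prod_\alpha L(\operatorname{Ind}_k^{k(\mu_\alpha)}\CC,s)^{d(k,\alpha)}$. The wild part is handled by computing the wild conductor exponents: characters of $\mathcal O_{v,1}^*$ into $G_w$ filtered by the higher unit groups $\mathcal{O}_{v,j}^*$. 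The weighted count $\sum\Phi^{-s}$ over wild characters has its dominant term at exponent $a(G)$. The number of wild "directions" achieving the maximal slope gives the power $p^e$, respectively $p^{e-f}$ when $G_w$ is not cyclic, together with the constant $\prod_{p^f\mid j}1/j$. Multiplying out, $\widehat f_1(s)=L(M_G,a(G)s)\cdot H(s)$ with $H$ absolutely convergent near $\Real s=a(G)$. Its value at $a(G)$ gives the Euler product $\prod_v(\sum_{\chi_v}\Phi^{-a})\lambda_v^{-1}$, with the $S$-factors replaced by $\Phi_v(\psi_v)^{-a}/|G|$.

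The third step controls the nontrivial $x$. Here $\widehat f(x;s)$ is a twisted product whose leading factor is an $L$-function of a nontrivial Kummer/Artin–Schreier character. For the tame part this requires the local–global principle for $n$-th powers (Grunwald–Wang for function fields, with no exceptional case). Such $x$ are only finitely many modulo ramification conditions, and they give poles of strictly smaller order on the circle $|t|=q^{-a(G)}$. Constant field extensions, however, produce poles at $t=\zeta q^{-a(G)}$ with $\zeta$ a root of unity of order dividing $|B(k,G)|$. This is exactly why we use $N_{av}$. The averaging $\frac{1}{|B|}\sum_j N(q^{M+j})q^{-a j}$ corresponds to extracting coefficients of $F(t)$ along the arithmetic progression where all poles except $\zeta=1$ cancel.

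The final step is a Tauberian step for power series. $F_W$ is rational-like near $|t|=q^{-a}$, with a pole of order $b(k,G)$ at $t=q^{-a}$ after averaging and meromorphic continuation slightly beyond. Hence the coefficients are $q^{aM}M^{b-1}\cdot \text{leading}\cdot(\log q)^b/(b-1)!$, where the $(\log q)^b$ comes from converting the pole in $s$ to a pole in $t$. For cyclic $G_w$ the $x=1$ term has order $p^e\sum d-1$, since the trivial-character factor is $\CC^{d-1}$ rather than $\CC^d$, matching $b(k,G)$.

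The main obstacle is the second step at the wild places. There we must precisely enumerate $\operatorname{Hom}(k_v^*,G_w)$ by conductor, identify which conductor exponents attain the minimal ratio $a(G)$, and show that the contributions with $p^f\nmid j$ are absorbed into poles of other orders at roots of unity, so that they disappear after averaging. We would start by computing the filtration quotients $\mathcal O_{v,j}^*/\mathcal O_{v,j+1}^*$ and the Artin–Hasse/Witt-vector description of characters of conductor $j$.
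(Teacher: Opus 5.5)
Your overall architecture (class field theory, local Fourier transforms along the unit filtration, Poisson summation over the finite group $\mathcal{O}_S^*\otimes G^{\wedge}$, M\"obius inversion over subgroups, frobenian functions plus Grunwald--Wang, averaging and a Tauberian theorem) is the paper's. However, two steps would not work as you describe them.

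\textbf{The trivial-character transform.} The pole structure of $\widehat f(\mathbf 1;s)$ is misdescribed. The layer of conductor $q_v^{j}$ at $v$ contributes $|\Hom(\F_{q_v}^*,G_t)|\,(q_v^{\sigma_j}-q_v^{\tau_j})\,q_v^{-js}$, where $\sigma_j=\sum_m r_m\bigl(j-1-\lfloor\frac{j-1}{p^m}\rfloor\bigr)$ and $\tau_j=\sum_m r_m\bigl(j-1-\lceil\frac{j-1}{p^m}\rceil\bigr)$. Each layer must therefore be matched with its own shifted factor $\zeta_{k(\mu_\alpha)}(js-\sigma_j)^{d(k,\alpha)}$, together with $\zeta_{k(\mu_\alpha)}(js-\tau_j)^{-d(k,\alpha)}$. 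There is no factorisation $L(M_G,\ell(s))H(s)$ with a single linear change of variable $\ell$ and $H$ an absolutely convergent Euler product at $s=a$.

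For the contributing layers one has $js-\sigma_j=j(s-a)+1$. The residue $\frac{1}{j}\operatorname{Res}_{s=1}\zeta_{k(\mu_\alpha)}(s)$ is precisely where $\prod 1/j$ comes from. If you absorb these layers into one $L$-function with slope $c$, the quotient has local factors equal to $1$ at $s=a$, while the global ratio $\zeta(j(s-a)+1)/\zeta(c(s-a)+1)$ tends to $c/j$. So evaluating $H(a)$ as $\prod_v(\cdots)\lambda_v^{-1}$ loses exactly the constant you then insert by hand.

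Likewise, the layers with $p^f\nmid j$ are not ``absorbed into poles at roots of unity that disappear after averaging''. The missing input is Lagemann's inequality: $(1+\sigma_j)/j\le a(G)$ for $2\le j\le p^e$, with equality if and only if $p^f\mid j$. You also need a uniform strict inequality for $j>p^e$, and $y_j(G)<a(G)$ for the negative-exponent layers. With these, the non-contributing layers have no singularities near $\Real(s)=a(G)$ at all. Averaging is only needed for the order-$b$ poles at the non-real points of $B(k,G)$. These come from the wild layers (period $2\pi i/(j\log q)$; for non-cyclic $G_w$, $|B(k,G)|=p^f$) as well as from $k(\mu_\alpha)/k$.

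\textbf{Non-trivial twists, wild component.} No Artin--Schreier character is attached to $x_w\in\mathcal O_S^*\otimes G_w^{\wedge}$: such $x$ are unit classes modulo $p^m$-th powers. By orthogonality on $\Hom(\mathcal O_v^*/\mathcal O_{v,j}^*,G)$, the local transform at $x_v$ is real. Apart from the tame component, it depends only on the level $h$ of $x_v$ in the filtration $\mathcal O_{v,j}^*\otimes G^{\wedge}$: it is the trivial transform truncated after layer $h$, plus one negative term in layer $h+1$.

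What is needed is that $x_w\neq 1$ forces $h<p^e$ at almost all $v$. Write a non-trivial component as $y^{p^t}$ with $y\notin k^{*p}$ and $t<e$. The coefficient of $\pi_v$ in the $v$-adic expansion of $y$ is non-zero for almost all $v$, so $x_v$ has level exactly $p^t$. The always-contributing layer $j=p^e$ is then absent from the Euler product, and the pole order drops. The frobenian/Kummer argument with Grunwald--Wang only handles the tame component $x_t$.

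Two minor points. The factor $|G|/|\F_q^*\otimes G^{\wedge}|$ comes from the $S$-unit theorem, after enlarging $S$ so that $\mathcal O_S$ has trivial class group. Also, $\Hom(\mathbf A^*,G)$ is locally compact, not compact, for wild $G$.
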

We have explicitly calculated the leading constant in Theorem \ref{maintheorem}. There is a significant interest in the leading constant of Malle's conjecture, due to the recent paper \cite{loughran2025mallesconjecturebrauergroups} of Loughran and Santens, where they put forward the conjecture over number fields.  Since their construction does not work over function fields with groups that have non-trivial wild parts,  one of the aims of the paper is to help generate more examples, particularly for the wild case over a function field. In \S 6, we explain how to interpret the leading constant using algebraic stacks.  We also define a Tamagawa measure $\tau_\Phi$ in that section. Using this, together with the study of the topology, we prove a stronger version of Theorem \ref{easyED} over global function fields. Recall, a \textit{continuity set} is a set whose boundary has measure zero.
\begin{theorem}\label{ED}
Let $G$ be a finite abelian group and $W \subseteq \prod_{v}BG[k_v]$ be a continuity set. Then
\begin{equation*}
    \lim_{M \to \infty}\frac{N_{av}(k,G,W,q^M)}{N_{av}(k,G,q^M)} = \frac{\tau_\Phi(W)}{\tau_\Phi(\prod_vBG[k_v])}.
\end{equation*}
\end{theorem}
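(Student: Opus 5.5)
We deduce Theorem~\ref{ED} from Theorem~\ref{maintheorem} by a standard approximation argument, in the spirit of the Portmanteau theorem. The first step is to rewrite the leading constant of Theorem~\ref{maintheorem} as a measure. Let $\tau_\Phi = \prod_v \tau_v$, with the convergence factors $\lambda_v^{-1}$, the prefactor $c^*$ being everything in $c(k,G,W)$ not attached to places, and $\tau_v(\{\psi_v\}) = \lambda_v^{-1}|G|^{-1}\Phi_v(\psi_v)^{-a(G)}$ on the finite discrete set $BG[k_v]=\Hom(\Gamma_{k_v},G)$. We need to check two things from the definitions in \S6:
\begin{itemize}
\item $\tau_v(BG[k_v])$ matches the factor $\bigl(\sum_{\chi_v}\Phi(\chi_v)^{-a}\bigr)\lambda_v^{-1}$. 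This uses that $\#\Hom(\Gamma_{k_v},G)$ relates to $\#\Hom(k_v^*,G)$ via local class field theory, up to the normalisation by $|G|$ absorbed into the definition.
\item For every cylinder set $W=\prod_{v\in S}\{\psi_v\}\times\prod_{v\notin S}BG[k_v]$ one has $c(k,G,W)=c^*\,\tau_\Phi(W)$.
\end{itemize}
By finite additivity of both sides in $W$, Theorem~\ref{maintheorem} then gives
\[
N_{av}(k,G,W,q^M)\sim c^*\tau_\Phi(W)\,q^{a(G)M}M^{b(k,G)-1}
\]
for every finite union of cylinder sets. These are exactly the open-and-closed sets defined by conditions at finitely many places. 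Since the power of $M$ and $q^{a(G)M}$ do not depend on $W$, the ratio statement holds for all such $W$. The absolute convergence of the infinite product, stated in Theorem~\ref{maintheorem}, guarantees that $\tau_\Phi$ is a well-defined finite Radon measure on the compact space $\prod_v BG[k_v]$. Compactness holds because each factor is finite.

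Next we pass to a general continuity set $W$. Let $\bar W$ be its closure and $W^\circ$ its interior, so that $\tau_\Phi(\bar W\setminus W^\circ)=0$. The clopen cylinder sets form a basis of the product topology. By compactness and outer regularity, for each $\epsilon>0$ we construct finite unions of cylinders $U\supseteq \bar W$ with $\tau_\Phi(U)<\tau_\Phi(\bar W)+\epsilon$. By inner regularity, approximating $W^\circ$ by compact subsets covered by cylinders contained in $W^\circ$, we construct $V\subseteq W^\circ$ with $\tau_\Phi(V)>\tau_\Phi(W^\circ)-\epsilon$. Monotonicity of $N_{av}$ in $W$, which holds because $N_{av}$ is a positive combination of counts, gives
\[
\tau_\Phi(V)\le\liminf\frac{N_{av}(W)}{N_{av}(\prod_v BG[k_v])}\le\limsup\frac{N_{av}(W)}{N_{av}(\prod_v BG[k_v])}\le \tau_\Phi(U),
\]
with both sides normalised by $\tau_\Phi(\prod_v BG[k_v])$. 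Letting $\epsilon\to0$ and using the continuity hypothesis yields the theorem.

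The main obstacle is the first step. We must identify precisely the Tamagawa measure of \S6 with the local factors in $c(k,G,W)$, including the exact normalisation of the local measures and the factor $|G|$. We also need the topology of \S6 on $\prod_v BG[k_v]$ to coincide with the product of discrete topologies, so that cylinders are a clopen basis. If the \S6 definition uses $\Hom(k_v^*,G)$ instead of $\Hom(\Gamma_{k_v},G)$, we would first transport everything through the local Artin map. The approximation argument itself is routine once this dictionary is established.
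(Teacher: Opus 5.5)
\textbf{Gap in the upper bound.} Your approximation step assumes that $BG[k_v]=\Hom(\Gamma_{k_v},G)\cong\Hom(k_v^*,G)$ is finite, so that $\prod_v BG[k_v]$ is compact. That holds only when $G$ is tame. As soon as $p\mid |G|$ this set is countably infinite, because $\mathcal{O}_{v,1}^*$ has infinitely many independent $\ZZ/p$-quotients. Proposition~\ref{MainCount} shows $|\Hom(\mathcal{O}_v^*/\mathcal{O}_{v,j}^*,G)|\to\infty$ as $j\to\infty$, which is why the local sums defining $\tau_{\Phi,v}$ are infinite series.

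Without compactness, a closed $\bar W$ need not lie in a finite union $U$ of cylinders with $\tau_\Phi(U)<\tau_\Phi(\bar W)+\epsilon$. For a counterexample, take $W=A\times\prod_{v\neq v_1}BG[k_v]$ with $A\subseteq BG[k_{v_1}]$ infinite and co-infinite; $W$ is clopen, hence a continuity set. A finite union of cylinders pins down only finitely many values at $v_1$. So for $U$ to contain $\{a\}\times\prod_{v\neq v_1}BG[k_v]$ for an unpinned $a\in A$, the cylinders of $U$ that do not constrain $v_1$ must already cover the whole space. This forces $U=\prod_v BG[k_v]$, whose measure is strictly larger than $\tau_\Phi(W)$. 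Your $\limsup$ bound therefore fails exactly in the wild case, which is the case the paper is about.

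Two smaller corrections follow from the same point. Finite unions of cylinders are not all the clopen sets determined at finitely many places. And $\tau_\Phi$ is a finite Borel measure on a Polish space, not a Radon measure on a compact one.

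\textbf{What survives and how to fix it.} The lower half of your argument is fine: $W^\circ$ is a countable union of cylinders, so finite sub-unions exhaust $\tau_\Phi(W^\circ)$. Inclusion--exclusion together with Theorem~\ref{easyED} then handles finite unions; you do not need to redo the leading-constant bookkeeping from Theorem~\ref{maintheorem}.

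For the upper bound, do not approximate from outside; this is what the paper does. Apply the lower bound to the complement $W^c$, which is also a continuity set. Then use
\[
N_{av}(k,G,W,q^M)+N_{av}(k,G,W^c,q^M)=N_{av}(k,G,q^M)
\]
together with $(W^c)^\circ=(\bar W)^c$. This gives
\[
\limsup\frac{N_{av}(k,G,W,q^M)}{N_{av}(k,G,q^M)}\le\frac{\tau_\Phi(\bar W)}{\tau_\Phi\bigl(\prod_v BG[k_v]\bigr)},
\]
with no compactness required.
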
 
\subsection{Outline of the proof} Let $W = \prod_{v \in S}\{\psi_v\} \times \prod_{v \notin S}\Hom(\Gamma_{k_v},G)$, and let $f_W$ denotes the indicator function of $\varphi \in \Hom(\Gamma_k,G)$ for which $\varphi \in W.$ Hence, $f_{W}(\varphi) = \prod_v f_{W_v}(\varphi_v)$  where  $f_{W_v}$ is the  local indicator function of $ \varphi_v = \psi_v$ for $v \in S$ and $f_{W_v} =1$ for $v \notin S$. We define
\begin{equation}\label{DserieswithLC}
    F_{G,W}(s)=\sum_{\varphi \in G\text{-}\operatorname{ext}(k)} \frac{f_{W}(\varphi)}{\Phi(\varphi)^s},
\end{equation}
so that $F_{G,W}(s)=\sum_{M\geq 0}N(k,G,W,q^M)\,q^{-Ms}$. Note that this is only a formal sum at this point.

$F_{G,W}(s)$ is periodic, as it is a power series in $q^{-s}$. Therefore, we work in the following strip:
 \begin{equation}\label{fundamentalRegionD(shift this)}
 D= \left\{ s\in\CC: -\frac{\pi i}{\log q} \le \Image(s) <  \frac{\pi i}{\log q}\right\}.
\end{equation}
Theorem \ref{maintheorem} is based on the analytic behaviour of the generating series $F_{G, W}(s)$, we show that there exists $\eta >0$ such that  $F_{G,W}(s)$ defines a holomorphic function for $\Real(s) \geq a(G) - \eta$ except on the line $\Real(s) = a(G),$ where there are finitely many  poles in $D$ with order at most $b(k, G)$. 

To prove Theorem \ref{maintheorem}, we analyze the corresponding Dirichlet series (\ref{DserieswithLC}). We aim to study this series using the harmonic analysis method developed in \cite{MR3884640}. This is the first time that this method has been applied to global function fields rather than number fields. A new difficulty arises in this setting because every place can be wild; consequently, even the convergence of the local Fourier transforms is not obvious, as it is an infinite sum.

In order to analyse the associated Dirichlet series, we also develop a theory of Frobenian functions over global function fields. This notion goes back to Serre in the number field setting \cite[\S3.3]{serre2011lectures}. This framework has found numerous applications in counting problems over number fields, see \cite{Gundlach_2026}, \cite{Frei_2022}, \cite{loughran2022frobenianmultiplicativefunctionsrational}, \cite[\S6]{Alberts_2026}. In the function field setting, however, the presence of the constant field leads to additional phenomena; the Artin conjecture is only valid for geometric extensions, and for non-geometric extensions a  modification is required. We address this difficulty and adapt the Frobenian framework to global function fields. 
\subsection{Existing results} $N(k,G,q^M)$ is in the realm of Malle's conjecture. For the case when $G$ is a finite abelian group, Wright \cite{10.1112/plms/s3-58.1.17} obtained an asymptotic of the counting function for the discriminant over number fields and over global function fields with $G$ being tame. Since Wright's work, there have been numerous works on removing this assumption from Wright's paper for the function field case.

A considerable amount of work has already been devoted to counting extensions over $\mathbb{F}_q(T)$. In the abelian case, \cite{gundlach2025countingabelianextensionsartinschreier} studies abelian $p$-group extensions over $\mathbb{F}_q(T)$ when $G$ is  wild. Other authors have considered counting problems with respect to different height functions: for example, \cite{potthast2024asymptoticselementaryabelianextensionslocal} works with the discriminant, while \cite{gundlach2025countingtwostepnilpotentwildly} and \cite{gundlach2026liftsunramifiedtwistslocalglobal} use the last jump in the ramification filtration when $G$ is wild. For non-abelian and tame $G$, Santens \cite{santens2026leadingconstantmallesconjecture} proves an asymptotic formula for general heights providing $q$ is sufficiently large.  These results make fundamental use of the fact that the ground field is $\F_q(T)$; for example, $\F_q(T)$ has trivial class group.

For general global function fields, Lagemann studied the case where $G$ is  wild. He obtained the asymptotic formula for  wild cyclic groups in \cite{Lagemann_2012} and for arbitrary wild groups in \cite{Lagemann_2015} when counting by the conductor. We allow general function fields over any finite abelian group $G$, and our result also allows us to impose finitely many local conditions. 
\begin{remark}\label{Remark: periodofpole}
Due to the periodic nature of the generating series of the corresponding counting function, we work in the region $D$ from \eqref{fundamentalRegionD(shift this)}.

$|B(k, G)|$, as defined in \eqref{Eq:formulaforB} is actually the number of rightmost poles of the corresponding generating series with highest order in the region $D$. Since there may be multiple poles of highest order in the region $D$, it is necessary to account for possible cancellations among them. 
To deal with possible cancellations, Lagemann restricted his results to some arithmetic progressions in \cite[Thm.~1.2]{Lagemann_2012} and \cite[Thm.~1.2]{Lagemann_2015}. 
We instead introduce a weighted average $N_{av}(k,G,W,q^M)$ to handle potential cancellations. 
As we shall see in \S 4, this approach allows us to express the leading constant more neatly, whereas Lagemann did not compute the leading constant explicitly in \cite[\S 1]{Lagemann_2015}.
\end{remark}
\subsection{Notations and Conventions}
We fix a function field $k$ with characteristic $p$, whose Dedekind zeta function is denoted by $\zeta_k(s)$. We use $\mathcal{O}_k$ to denote the ring of integers of $k$ and use  $\mathcal{O}_S$ for the $S$-integers of $k$ for $S \subseteq \Omega_k$ where $\Omega_k$ is the set of all places of $k$. 

For $v \in \Omega_k$, we let  $k_v$ be the completion of $k$ at $v$, and let $\mathcal{O}_v$ be the ring of integers of $k_v$ and  $\mathbb{F}_v$ be the residue field at a finite place $v$. We denote the cardinality of the residue field at a finite place $v$ by $q_v$. We denote by $\AS^{*}_k$ the ideles of $k$.  
 
In this paper, all finite groups are viewed as topological groups with the discrete topology, and for topological groups $A$ and $B$, denote by $\Hom(A, B)$ the continuous homomorphisms from $A$ to $B$ with open-compact topology.
\begin{acknowledgements}
I would like to thank Daniel Loughran for suggesting the problem and for his many suggestions that greatly improved this paper. I am grateful to Will Sawin for his help with the proof of Theorem~2.2, Fabian Gundlach for useful comments, and Abdulmuhsin Alfaraj for useful discussions. This research was supported by EPSRC.

\end{acknowledgements}

\section{Frobenian functions over global function fields}
In \cite[\S~2]{Frei_2022}, the authors study the problem of counting abelian extensions over number fields.
To investigate the analytic behavior of the associated Dirichlet series, they introduce frobenian functions and analyze the resulting Dirichlet series using the Artin L-function.

We aim to adapt this strategy to the function field setting.
Let $L/k$ be a finite Galois extension of global function fields.
Denote by $K$ the maximal constant sub-extension of $L$. Then $L/K$ is geometric, in the sense that it has the same constant field as $K$,
while $K/k$ is a constant extension.
Accordingly, one obtains a canonical short exact sequence
\begin{equation}\label{Seq:Const-Geo_Decomp} 
1 \longrightarrow \mathrm{Gal}(L/K) \longrightarrow \mathrm{Gal}(L/k) \longrightarrow \mathrm{Gal}(K/k) \longrightarrow 1.
\end{equation}
Thus, every finite extension of global function fields admits a natural decomposition into its geometric and constant parts.
The main difficulty is that the usual holomorphy statement for Artin $\operatorname{L}$-functions only applies to the geometric part, while the constant part contributes additional poles.
Consequently, the argument of \cite[\S~2]{Frei_2022} does not directly extend to the function field setting,
and a different treatment is needed for the contribution of the constant extension.

\subsection{Artin L-functions
}
Let $k$ be a function field of genus $g$ with constant field $\F_q$. According to \cite[Thms.~5.9 and 5.10]{Rosen_2002}, we have the basic properties of the zeta function of $k$:

\begin{lemma}\label{lemma:value_zeta_function}
We have
\begin{equation*}
\zeta_{k}(s)=\frac{\mathcal L(q^{-s})}{(1-q^{1-s})(1-q^{-s})},\qquad \Real(s)>1,
\end{equation*}
where $\mathcal L(u)\in\Z[u]$ has degree $2g$, $\mathcal L(0)=1$ and $\mathcal L(1)>0$. The right-hand side gives a meromorphic continuation of $\zeta_k(s)$ to $\Real(s)>0$, with simple poles at $s=1+\frac{2\pi i n}{\log q}$ for $n\in\Z$ and no other poles there. All roots of $\mathcal L(u)$ have absolute value $q^{-1/2}$; in particular, $\zeta_k(s)\neq 0$ for $\Real(s)>\frac{1}{2}$.
\end{lemma}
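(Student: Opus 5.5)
This lemma is the standard rationality statement for the zeta function of a global function field. Together with the Riemann hypothesis for curves over finite fields, it is what \cite[Thms.~5.9 and 5.10]{Rosen_2002} establishes. I would derive it as follows.

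First I will write $\zeta_k(s)=Z(q^{-s})$, where
\[
Z(u)=\sum_{n\ge 0} A_n u^n
\]
and $A_n$ is the number of effective divisors of degree $n$. This follows by expanding the Euler product
\[
\prod_v (1-q_v^{-s})^{-1},\qquad q_v=q^{\deg v},
\]
which converges absolutely for $\Real(s)>1$.

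Next I will count effective divisors by grouping them into divisor classes. A class $[D]$ of degree $n$ contains $(q^{\ell(D)}-1)/(q-1)$ effective divisors. There are $h$ classes in each degree $n$, because the constant field of $k$ is exactly $\F_q$, so a divisor of degree $1$ exists by F.~K.~Schmidt's theorem. For $n>2g-2$, Riemann--Roch gives $\ell(D)=n-g+1$. Summing the resulting geometric series, the generating function becomes
\[
Z(u)=\frac{\mathcal L(u)}{(1-u)(1-qu)},
\]
with $\mathcal L(u)\in\Z[u]$ of degree at most $2g$. Since $A_0=1$, we get $\mathcal L(0)=1$.

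To see that the degree is exactly $2g$, I will use the functional equation. It comes from pairing $D$ with $K-D$ under Riemann--Roch, and it gives
\[
\mathcal L(u)=q^g u^{2g}\,\mathcal L\!\left(\tfrac{1}{qu}\right),
\]
so the leading coefficient is $q^g$. For the value at $1$, a residue computation at $u=1$ gives $\mathcal L(1)=h>0$, the class number.

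The rational expression then gives the meromorphic continuation to all of $\CC$, and in particular to $\Real(s)>0$. In that half-plane the only poles come from the factor $1-q^{1-s}$. This vanishes exactly when $s=1+\frac{2\pi i n}{\log q}$, and the poles are simple provided $\mathcal L(1/q)\neq 0$. The other factor $1-q^{-s}$ has no zeros with $\Real(s)>0$. Finally, $\mathcal L(1/q)=q^{-g}\mathcal L(1)\neq 0$ by the functional equation.

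The main obstacle is the statement that all roots of $\mathcal L(u)$ have absolute value $q^{-1/2}$. This is Weil's Riemann hypothesis for curves over finite fields. I would invoke it rather than reprove it. If a proof were wanted, I would use Bombieri--Stepanov: bound $\#C(\F_{q^m})=q^m+1+O(q^{m/2})$ for all $m$, then conclude from
\[
\#C(\F_{q^m})=q^m+1-\sum_i \alpha_i^m,
\]
where the $\alpha_i^{-1}$ are the roots of $\mathcal L$, that $|\alpha_i|\le q^{1/2}$. The functional equation permutes $\alpha_i\mapsto q/\alpha_i$, which gives the reverse inequality.

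Nonvanishing of $\zeta_k(s)$ for $\Real(s)>\tfrac12$ follows: if $\mathcal L(q^{-s})=0$, then $|q^{-s}|=q^{-1/2}$, i.e.\ $\Real(s)=\tfrac12$.
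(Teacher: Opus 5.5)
Your proposal is correct and takes essentially the paper's route. The paper simply cites Rosen's Theorems~5.9 and~5.10, and your sketch reproduces the argument behind them: counting effective divisors via Riemann--Roch and the existence of a degree-one divisor, using the functional equation to get degree exactly $2g$ and $\mathcal L(1/q)=q^{-g}\mathcal L(1)=q^{-g}h\neq 0$, and invoking Weil's Riemann hypothesis for the absolute values of the roots.
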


Recall that if $K/k$ is a constant extension, we have that 
\begin{equation}\label{eq:decomp_of_DEDEKIND_CONST_EXT}
    \zeta_K(s)= \zeta_k(s)\prod_{\substack{\chi:\operatorname{Gal}(K/k)\to \CC^* \\ \chi \text{ irreducible, }  \chi \neq1}} \operatorname{L}(\chi,s),
\end{equation}
where $\operatorname{L}(\chi,s)$ is the Artin $\operatorname{L}$-function associated to the representation $\chi$. As we are working over global function fields, sometimes it is easier to use the change of variable $u = q^{-s}$ with the notation defined in Rosen's book: $Z(\chi,u)=\operatorname{L}(\chi,s)$ and $ Z_k(u) =  \zeta_k(s).$

A representation  $\chi$ is called
a \emph{constant character} if it is trivial on $\Gal(L/K)$, equivalently, $ \chi$ factors through $\Gal(K/k)$ as in \eqref{Seq:Const-Geo_Decomp}. A constant character is one-dimensional if it is irreducible and non-trivial.
In which case, $\chi(\Frob_q)$ is well defined and given by $\chi (\sigma)$, where $\sigma$ is any lift of $\Frob_q \in \gal(K/k)$ to $\gal(L/k)$, since $\chi$ is a group homomorphism. Moreover,
\begin{equation}\label{eq:chi-frobv}
\chi(\Frob_v)=\chi(\Frob_q)^{\deg(v)}.
\end{equation}
The next theorem can be regarded as a version of Artin's conjecture for global function fields.  Many standard formulations only address the geometric case; by contrast, the statement below covers arbitrary finite Galois extensions.

\begin{theorem}\label{lemma_Sawin}
Let $\chi$ be an irreducible representation of $\Gal(L/k)$.
Then $\operatorname{L}(\chi,s)$ is holomorphic if and only if $\chi$ is not constant; in this case, $Z(\chi,u)$ is a polynomial whose roots have absolute value $q^{-1/2}$. Moreover, if $ \chi$ is constant, then
\begin{equation}\label{eq_Z(chiu)_is_shift}
    Z(\chi,u) = Z_k(\chi(\operatorname{Frob}_q)u).
\end{equation}
\end{theorem}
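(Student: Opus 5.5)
Write $K$ for the maximal constant subextension of $L$, so $K=k\mathbb{F}_{q^n}$ with $n=[K:k]$, and $H=\Gal(L/K)$ for the geometric part, as in \eqref{Seq:Const-Geo_Decomp}. The plan is to treat the constant case by a direct Euler product computation, and to reduce the non-constant case to the geometric case. In the geometric case the needed input is Weil's theorem (the Riemann hypothesis for curves over finite fields), together with Grothendieck's cohomological interpretation of $Z(\chi,u)$.

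\textbf{Constant case.} Suppose $\chi$ is constant. Since $\Gal(K/k)$ is cyclic, $\chi$ is a character with $\chi(\Frob_v)=\chi(\Frob_q)^{\deg v}$ by \eqref{eq:chi-frobv}. Every place is unramified in $K/k$, so each Euler factor is
$$(1-\chi(\Frob_v)q_v^{-s})^{-1}=(1-(\chi(\Frob_q)u)^{\deg v})^{-1}, \qquad u=q^{-s}.$$
Taking the product over $v$ gives $Z(\chi,u)=Z_k(\chi(\Frob_q)u)$, which is \eqref{eq_Z(chiu)_is_shift}. By Lemma~\ref{lemma:value_zeta_function}, $Z_k$ has a pole at $u=q^{-1}$, so $Z(\chi,u)$ has a pole at $u=\chi(\Frob_q)^{-1}q^{-1}$. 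Hence $\operatorname{L}(\chi,s)$ is not holomorphic, which gives one direction of the equivalence.

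\textbf{Non-constant case.} Suppose $\chi$ is irreducible and not constant. I will show that $\chi|_H$ contains no trivial constituent. By Frobenius reciprocity, $\langle \chi|_H,1\rangle=\langle \chi,\operatorname{Ind}_H^{G}1\rangle$. The representation $\operatorname{Ind}_H^G 1$ is the regular representation of $\Gal(K/k)$, inflated to $G$, so it is a sum of constant characters. Since $\chi$ is irreducible and not constant, this multiplicity is $0$.

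Next, let $X$ be the smooth projective curve of $k$ and $\mathcal F$ the middle extension ($j_*$) of the lisse $\overline{\mathbb{Q}}_\ell$-sheaf attached to $\chi$ on the unramified locus. The Grothendieck–Lefschetz trace formula gives
$$Z(\chi,u)=\prod_{i=0}^2\det(1-\Frob\, u\mid H^i(X_{\bar{\F}_q},\mathcal F))^{(-1)^{i+1}}.$$
The groups $H^0$ and $H^2$ are the geometric invariants and coinvariants of $\chi$, that is, of $\chi|_H$ acting through the geometric monodromy. These vanish by the previous paragraph, so $Z(\chi,u)=\det(1-\Frob\,u\mid H^1)$ is a polynomial. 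One must check that the local factors at ramified places match, which they do since the middle extension stalks are the inertia invariants.

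\textbf{Purity.} By Deligne's Weil II (or Weil's theorem applied via the Brauer induction argument, as in the classical proof), $H^1$ of the middle extension of a pure weight-$0$ sheaf is pure of weight $1$. Hence all roots of $Z(\chi,u)$ have absolute value $q^{-1/2}$.

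\textbf{Main obstacle.} I expect the difficult step to be the rigorous identification of the $H^0/H^2$ vanishing with ``no trivial constituent of $\chi|_H$'' when the constant field of $L$ is larger than $\F_q$. In that situation $\chi|_H$ may decompose into several Frobenius-permuted pieces. The Frobenius reciprocity step above is intended to handle exactly this point. Purity of the middle extension is then a citation.
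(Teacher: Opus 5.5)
Your proof is correct. The constant case is the paper's Euler-product computation. The paper justifies the factors at places ramified in $L/k$ by noting that their inertia groups map trivially to $\Gal(K/k)$ and so lie in $\ker\chi$, which is what your appeal to unramifiedness of $K/k$ implicitly uses. Your use of the pole of $Z_k$ at $u=q^{-1}$ makes the ``only if'' direction explicit, which the paper leaves implicit.

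The two routes diverge in the non-constant case. The paper gives no argument of its own there. It cites Sawin's MathOverflow answer for holomorphy, after flagging the gap in Murty--Scherk. It then gets the Riemann hypothesis as in Murty--Scherk, by comparing with $\zeta_L(s)=\prod_\chi \operatorname{L}(\chi,s)^{\chi(1)}$. The constant characters contribute exactly $\zeta_K(s)$, which has the same poles as $\zeta_L(s)$ because $L/K$ is geometric. So each entire $\operatorname{L}(\chi,s)$ with $\chi$ non-constant can only vanish at zeros of $\zeta_L(s)$, which lie on $\Real(s)=\frac12$ by Weil.

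You instead supply a self-contained cohomological proof of holomorphy, which is exactly the step the paper outsources. Its inputs are:
\begin{itemize}
\item the geometric monodromy is $\Gal(L\bar{\F}_q/k\bar{\F}_q)\cong\Gal(L/L\cap k\bar{\F}_q)=\Gal(L/K)$;
\item $V^{\Gal(L/K)}=0$, via your Frobenius reciprocity step, or simply because it is a subrepresentation (as $\Gal(L/K)$ is normal) that cannot be all of $V$;
\item invariants and coinvariants of a finite group in characteristic $0$ have the same dimension.
\end{itemize}

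For purity, Weil II is more than you need. $j_*\mathcal{F}$ is a direct summand of $\pi_*\overline{\QQ}_\ell$ for the normalisation $\pi\colon Y\to X$ of $X$ in $L$. Hence $H^1(X_{\bar{\F}_q},j_*\mathcal{F})$ is a summand of $H^1(Y_{\bar{\F}_q},\overline{\QQ}_\ell)$, and Weil's theorem for curves suffices. Alternatively, once polynomiality is known, the paper's comparison with $\zeta_L$ finishes the job.

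In a write-up you should also fix two pieces of bookkeeping:
\begin{itemize}
\item The Frobenius normalisation: Artin's local factors use arithmetic Frobenius, the trace formula geometric Frobenius. Attach the sheaf to $\chi$ with the matching convention, or to $\bar{\chi}$, which is also irreducible and non-constant.
\item $X$ is geometrically connected because $\F_q$ is the full constant field of $k$. This is what identifies $H^0$ and $H^2$ with invariants and coinvariants of the geometric monodromy.
\end{itemize}
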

\begin{proof}
For the holomorphicity of $\operatorname{L}(\chi,s)$ for characters $\chi$ that do not factor through $\Gal(K/k)$, 
a statement appears in \cite[Lemma~1]{MurtyScherk1994}. 
However, the final step of the proof given there relies on the implicit assumption that 
if a product of rational functions is a polynomial, 
then each factor is itself a polynomial, 
which does not hold in general. 
A complete proof can instead be found in \cite{MO_Sawin_2026_ArtinFF}. The Riemann hypothesis then follows as in \cite[Lemma~1]{MurtyScherk1994}.

Let $\chi$ be a constant character; the local Euler factor of $Z(\chi,u)$ at an unramified place $v$ is
\[
\operatorname{L}_v(\chi,u)
=
\det\left(
1-\chi(\Frob_v)\,u^{\deg v}
\right)^{-1}.
\]
As $\chi$ is constant, the determinant reduces to a scalar. At a place $v$ that ramifies in $L/k$, the inertia subgroup $I_v$ has trivial image in $\Gal(K/k)$ by \eqref{Seq:Const-Geo_Decomp}, hence $I_v\subseteq\ker\chi$ and the local factor takes the same form.
Taking the product over all places $v$ of $k$ yields
\[
Z(\chi,u)
=
\prod_v
\left(
1-(\chi(\Frob_q)u)^{\deg v}
\right)^{-1}
=
Z_k\bigl(\chi(\Frob_q)u\bigr),
\]
which is the zeta function of $k$ with the variable $u$ multiplied
by the scalar $\chi(\Frob_q)$.
\end{proof}

\subsection{Frobenian functions}
Our proof of the main result relies on the theory of frobenian functions; we refer to \cite[\S 3.3]{serre2011lectures} for the theory over number fields. 
Recall that a \emph{class function} on a group is a function which is constant on its conjugacy classes.

\begin{definition}\label{def:frobenian}
Let $k$ be a global field and let $\rho : \Omega_k \to \CC$ be a function
on the set of places of $k$. Let $S$ be a finite set of places of $k$.
We say that $\rho$ is \emph{$S$-frobenian} if there exist
\begin{enumerate}
    \item a finite Galois extension $K/k$ with Galois group $\Gamma$ such
    that $S$ contains all places which ramify in $K/k$, and
    \item a class function $\varphi : \Gamma \to \CC$,
\end{enumerate}
such that for all $v \notin S$ one has
\[
\rho(v) = \varphi(\Frob_v),
\]
where $\Frob_v \in \Gamma$ denotes a Frobenius element at $v$.
We say that $\rho$ is \emph{frobenian} if it is $S$-frobenian for some $S$.
A subset of $\Omega_k$ is called $(S\text{-})\emph{frobenian}$ if its
indicator function is $(S\text{-})$frobenian.
\end{definition}

We adopt the usual abuse of notation (see \cite[\S 3.2.1]{serre2011lectures}) and write
$\Frob_v \in \Gamma$ for a choice of an element in the Frobenius conjugacy
class at $v$. This is well-defined as $\varphi$ is a class function.

We define the mean of $\rho$ to be
\[
m(\rho) = \frac{1}{|\Gamma|} \sum_{\gamma \in \Gamma} \varphi(\gamma) \in \CC.
\]

\begin{example}\label{ex:frobenian-polynomial}
Let $f(x) \in k[x]$ be a (not necessarily irreducible) separable polynomial. Then the set
\[
\{ v \in \Omega_k : f(x) \text{ has a root in } k_v \}
\]
is frobenian. Indeed, let $K$ be the splitting field of $f$. For a place $v$
which is unramified in $K$, the polynomial $f$ has a root in $k_v$ if and only
if $\Frob_v$ acts with a fixed point on the roots of $f$ in $\bar{k}$. The set
of such elements is a conjugacy-invariant subset of $\Gamma$, which proves the claim.
\end{example}
\begin{example}
Let $\chi$ be an irreducible constant character of $\Gal(L/k)$ and view $\chi$ as a character of $\Gal(K/k)$. The function $v\mapsto\chi(\Frob_v)=\chi(\Frob_q)^{\deg v}$, cf.\ \eqref{eq:chi-frobv}, is $\emptyset$-frobenian, the constant extension $K/k$ being unramified at every place.
\end{example}

We will require the following auxiliary result concerning non-negative frobenian functions. It is inspired by its number field analogue in \cite[Lemma~2.3]{loughran2022frobenianmultiplicativefunctionsrational}. The proof given there relies on an assertion concerning sums of absolute values of complex numbers that does not hold in general. Replacing this assertion with the correct version yields the following function field analogue.

\begin{lemma}\label{lem_mean_frob_real_nonneg}
Let $k$ be a global function field, let $S$ be a finite set of places of $k$, and let $\rho$ be an $S$-frobenian function associated to $\Gamma=\Gal(L/k)$ which is real-valued and non-negative, and let $\chi$ be an irreducible constant character of $\Gamma$.
\begin{enumerate}
    \item We have $\Real m(\rho\chi) \leq |m(\rho\chi)| \leq m(\rho)$.
    \item The following are equivalent:
    \begin{enumerate}
        \item[(a)] $\Real m(\rho\chi) = m(\rho)$;
        \item[(b)] $m(\rho\chi) = m(\rho)$;
        \item[(c)] for all $n \geq 1$, $\rho\chi^{n}(v) = \rho(v)$ for all but finitely many places $v \in \Omega_k$.
    \end{enumerate}
\end{enumerate}
\end{lemma}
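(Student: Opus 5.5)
Throughout, $\rho$ is $S$-frobenian via a class function $\varphi$ on $\Gamma=\Gal(L/k)$, and $\chi$ is an irreducible constant character. By enlarging $L$ if necessary we may assume $\chi$ is a character of $\Gamma$ factoring through $\Gal(K/k)$, where $K\subseteq L$ is the maximal constant subextension. Then $\rho\chi$ is $S$-frobenian with class function $\varphi\chi$, and
\[
m(\rho\chi)=\frac{1}{|\Gamma|}\sum_{\gamma\in\Gamma}\varphi(\gamma)\chi(\gamma).
\]
The one point to check is that $\varphi$ is real and non-negative on all of $\Gamma$, not only on Frobenius elements. This follows from Chebotarev's density theorem for function fields: every conjugacy class of $\Gamma$ occurs as $\Frob_v$ for infinitely many $v\notin S$.

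\textbf{Part (1).} Since $|\chi(\gamma)|=1$ and $\varphi\ge 0$, the triangle inequality gives
\[
|m(\rho\chi)|\le \frac{1}{|\Gamma|}\sum_{\gamma}\varphi(\gamma)=m(\rho),
\]
and $\Real z\le |z|$ holds trivially.

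\textbf{Part (2).} The implications (c)$\Rightarrow$(b)$\Rightarrow$(a) are the easy ones.
\begin{itemize}
\item[(c)$\Rightarrow$(b):] Take $n=1$. Then $\varphi\chi(\Frob_v)=\varphi(\Frob_v)$ for almost all $v$. By Chebotarev, every conjugacy class is hit by such $v$, so $\varphi\chi=\varphi$ on $\Gamma$. Averaging over $\Gamma$ gives $m(\rho\chi)=m(\rho)$.
\item[(b)$\Rightarrow$(a):] This holds because $m(\rho)$ is real.
\end{itemize}

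The substantive step is (a)$\Rightarrow$(c), which I expect to be the main obstacle. Assume $\Real m(\rho\chi)=m(\rho)$. Then
\[
\sum_{\gamma}\varphi(\gamma)\bigl(1-\Real\chi(\gamma)\bigr)=0 .
\]
Every term is non-negative, so each term vanishes. Hence $\chi(\gamma)=1$ whenever $\varphi(\gamma)>0$, since $|\chi(\gamma)|=1$ and $\Real\chi(\gamma)=1$ force $\chi(\gamma)=1$. It follows that $\varphi\chi^n=\varphi$ on $\Gamma$ for every $n\ge1$: on the support of $\varphi$ we have $\chi^n=1$, and off the support both sides are $0$.

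Evaluating at $\Frob_v$ for $v\notin S$ then gives $\rho\chi^n(v)=\rho(v)$ for all $v\notin S$, which is all but finitely many places, since $S$ is finite. This is precisely where the pointwise "all terms vanish" argument is needed, in place of the incorrect assertion about sums of absolute values mentioned before the statement. We only need real-part equality, and the non-negativity of each summand $\varphi(1-\Real\chi)$ delivers this cleanly.
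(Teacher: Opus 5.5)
Your proof is correct and is essentially the paper's argument. The identity $\sum_\gamma \varphi(\gamma)\bigl(1-\Real\chi(\gamma)\bigr)=0$ with non-negative summands is exactly the paper's fact that $\sum_\gamma|z_\gamma|=\Real\sum_\gamma z_\gamma$ forces $z_\gamma=|z_\gamma|$, which gives $\chi=1$ on the support of $\varphi$; you only arrange the cycle of implications differently, and your Chebotarev justification that $\varphi\geq 0$ on all of $\Gamma$ supplies a point the paper merely asserts.
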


\begin{proof}
Note that $\varphi$ is also real and non-negative.
We thus have
\begin{equation}\label{abs_value_mean_rhochi}
\Real(m(\rho\chi)) \leq |m(\rho\chi)| = \frac{1}{|\Gamma|}\Bigl|\sum_{\gamma\in\Gamma}\varphi(\gamma)\chi(\gamma)\Bigr| \leq \frac{1}{|\Gamma|}\sum_{\gamma\in\Gamma}\varphi(\gamma) = m(\rho),
\end{equation}
as required, on using $|\chi| = 1$.

To prove \textit{(2)}, we use the following fact: if $z_1,\dots,z_m\in\CC$ and
\begin{equation}\label{complex_fact}
|z_1| + \dots + |z_m| = \Real(z_1 + \dots + z_m),
\end{equation}
then $z_i = |z_i|$ for all $i$. Indeed, $\Real(z_i)\leq|z_i|$ for each $i$, with equality if and only if $z_i\in\RR_{\geq 0}$, and \eqref{complex_fact} forces equality in every term.

Assume \textit{(a)} holds. Then equality holds throughout \eqref{abs_value_mean_rhochi}, so by \eqref{complex_fact} with $z_\gamma = \varphi(\gamma)\chi(\gamma)$ we have
\begin{equation}\label{eq:pointwise_stabiliser}
\varphi(\gamma)\chi(\gamma) = |\varphi(\gamma)\chi(\gamma)| = \varphi(\gamma)\qquad\forall\gamma\in\Gamma,
\end{equation}
and averaging over $\Gamma$ gives $m(\rho\chi)=m(\rho)$, whence \textit{(b)}.

Assume \textit{(b)}, so that $\sum_{\gamma\in\Gamma}\varphi(\gamma) = \sum_{\gamma\in\Gamma}\varphi(\gamma)\chi(\gamma).$
Taking real parts and applying \eqref{complex_fact} with $z_\gamma = \varphi(\gamma)\chi(\gamma)$ again gives $\varphi(\gamma)\chi(\gamma)=\varphi(\gamma)$ for all $\gamma\in\Gamma$; for every $\gamma$ with $\varphi(\gamma)\neq 0$ this forces $\chi(\gamma)=1$, so
\begin{equation*}
\varphi(\gamma)\chi^n(\gamma) = \varphi(\gamma)\qquad\forall n\geq 1,\ \gamma\in\Gamma,
\end{equation*}
which proves \textit{(c)}, as $\rho\chi^n(v)=\varphi\chi^n(\Frob_v)=\varphi(\Frob_v)=\rho(v)$ for all places $v\notin S$.

Finally, assume \textit{(c)} and take $n=1$: the class functions $\varphi\chi$ and $\varphi$ agree on the Frobenius classes of all places outside a finite set, and by the Chebotarev density theorem these exhaust the conjugacy classes of $\Gamma$, so $\varphi\chi=\varphi$ and $m(\rho\chi)=m(\rho)$; this proves \textit{(b)}, which  implies \textit{(a)}, as required.
\end{proof}
The proof shows that each condition in Lemma~\ref{lem_mean_frob_real_nonneg}\textit{(2)} is equivalent to the pointwise identity $\varphi\chi=\varphi$ on $\Gamma$. This has the following consequence.

\begin{corollary}\label{cor:extremal_subgroup}
Let $\rho$, $\varphi$ and $\Gamma$ be as in Lemma~\ref{lem_mean_frob_real_nonneg}, let $K$ be the maximal constant sub-extension of $L/k$, and let $X$ denote the group of irreducible constant characters of $\Gamma$. Then
\begin{equation*}
H:=\{\chi\in X : m(\rho\bar\chi)=m(\rho)\}=\{\chi\in X : \varphi\chi=\varphi\}
\end{equation*}
is a subgroup of $X$; in particular $H$ is cyclic. Moreover, setting $N:=\bigcap_{\chi\in H}\ker\chi\subseteq\Gal(K/k)$ and $M:=K^{N}$, the extension $M/k$ is constant, $H$ is the character group of $\Gal(M/k)$, and $\prod_{\chi\in H}\operatorname{L}(\chi,s)=\zeta_M(s)$.
\end{corollary}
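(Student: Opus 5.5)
First I will check that the two descriptions of $H$ agree. For $\chi\in X$ the conjugate $\bar\chi$ is again an irreducible constant character of $\Gamma$. Applying Lemma~\ref{lem_mean_frob_real_nonneg}(2) to $\bar\chi$, together with the remark after its proof, gives that $m(\rho\bar\chi)=m(\rho)$ holds if and only if $\varphi\bar\chi=\varphi$. Since $\varphi$ is real-valued, complex conjugation turns this into $\varphi\chi=\varphi$, which proves the stated equality of sets.

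The subgroup property is easiest to see from the pointwise description. The equation $\varphi\chi=\varphi$ says exactly that $\chi(\gamma)=1$ for every $\gamma$ in the support $T:=\{\gamma:\varphi(\gamma)\neq 0\}$. Hence $H$ is the annihilator in $X$ of the image of $T$ in $\Gal(K/k)$. This set contains the trivial character and is closed under products and inverses, so it is a subgroup. For cyclicity, note that $\Gal(K/k)\cong\Z/[K:k]$ is generated by $\Frob_q$, so its character group $X$ is cyclic, and every subgroup of a cyclic group is cyclic.

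Next I will use finite abelian duality for $\Gal(K/k)$. Put $N=\bigcap_{\chi\in H}\ker\chi$. The characters of $\Gal(K/k)/N$ are exactly the characters of $\Gal(K/k)$ that are trivial on $N$. This set contains $H$, and by the double-annihilator identity $H^{\perp\perp}=H$ it equals $H$. Let $M=K^N$. Then $\Gal(M/k)\cong\Gal(K/k)/N$ has character group $H$. The extension $M/k$ is a subextension of the constant extension $K/k$, so $M$ is obtained by adjoining constants and $M/k$ is constant.

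Finally, I will apply \eqref{eq:decomp_of_DEDEKIND_CONST_EXT} to the constant extension $M/k$, which gives $\zeta_M(s)=\zeta_k(s)\prod_{1\neq\chi\in H}\operatorname{L}(\chi,s)$. Since $\operatorname{L}(1,s)=\zeta_k(s)$, this is the same as $\prod_{\chi\in H}\operatorname{L}(\chi,s)=\zeta_M(s)$. The Artin $L$-function of $\chi$ viewed as a character of $\Gamma$ coincides with that of $\chi$ viewed on $\Gal(M/k)$, by invariance of $L$-functions under inflation. No step looks hard; the only points needing care are the conjugate $\bar\chi$ in the definition of $H$ and the duality identification.
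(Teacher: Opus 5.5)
Your proposal is correct and follows the paper's proof essentially step for step. You apply Lemma~\ref{lem_mean_frob_real_nonneg}(2) to $\bar\chi$ to reach the pointwise identity, argue the subgroup property and cyclicity inside the cyclic dual of $\Gal(K/k)$, use finite abelian duality to identify $H$ with the characters of $\Gal(M/k)$, and apply \eqref{eq:decomp_of_DEDEKIND_CONST_EXT} to $M/k$. The differences are cosmetic: you go from $\varphi\bar\chi=\varphi$ to $\varphi\chi=\varphi$ by complex conjugation (using that $\varphi$ is real) rather than multiplying by $\chi$, and you describe $H$ as the annihilator of the support of $\varphi$ rather than as the stabiliser of $\varphi$.
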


\begin{proof}
For $\chi\in X$ the conjugate $\bar\chi$ lies in $X$, and by Lemma~\ref{lem_mean_frob_real_nonneg}\textit{(2)} together with \eqref{eq:pointwise_stabiliser}, applied to $\bar\chi$, the condition $m(\rho\bar\chi)=m(\rho)$ is equivalent to $\varphi\bar\chi=\varphi$ pointwise; multiplying pointwise by $\chi$ and using $|\chi|=1$ shows that this is equivalent to $\varphi\chi=\varphi$. Thus $H$ is the stabiliser of $\varphi$ under the multiplication action of $X$, hence a subgroup, and it is cyclic as a subgroup of the character group of the cyclic group $\Gal(K/k)$. Viewing each $\chi\in H$ through $\Gal(K/k)$, duality for finite abelian groups identifies $H$ with the character group of $\Gal(K/k)/N=\Gal(M/k)$, and $M/k$ is constant as a sub-extension of $K/k$. Finally, equation  \eqref{eq:decomp_of_DEDEKIND_CONST_EXT}, applied to $M/k$, together with $\operatorname{L}(1,s)=\zeta_k(s)$ for the trivial character, which lies in $H$, gives $\prod_{\chi\in H}\operatorname{L}(\chi,s)=\zeta_M(s)$.
\end{proof}

\begin{definition}\label{def:algebraic-singularity}
A singularity of a complex function $f(z)$ at $z=w$ is called \emph{algebraic} if, in a neighbourhood of $w$ and for a fixed choice of branch, $f(z)$ can be written in the form
\begin{equation}
(1-z/w)^{\alpha} g(z),
\end{equation}
where $g(z)$ is analytic near $w$, $g(w)\neq 0$, and $\alpha \notin \{0,1,2,\ldots\}$. We call $-\alpha$ the \emph{order} of the singularity.
\end{definition}
%For $\lambda\in\CC$ and $\Real(s)>1$ we %set $\operatorname{L}(\chi,s)^{\lambda}:=\exp(\lambda\log\operatorname{L}(\chi,s))$, where $\log\operatorname{L}(\chi,s)$ is given by the absolutely convergent Dirichlet series $\sum_{v\in\Omega_k}\sum_{n\geq 1}\operatorname{tr}(\chi(\Frob_v)^n\mid V^{I_v})/(nq_v^{ns})$.%
\begin{proposition}\label{Prop:Frob_func_into_zeta}
Let $S$ be a finite set of places of $k$ and let $\rho$ be a real-valued and non-negative $S$-frobenian function. Assume that $|\rho(v)| < q_v$ holds for all $v \notin S$. Then the Euler product
\begin{equation}
F(s) = \prod_{v\notin S}\left(1+\frac{\rho(v)}{q_v^s}\right)
\end{equation}
has the form
\begin{equation}\label{eq:Ftozeta}
F(s) = \zeta_M(s)^{m(\rho)}\,P(s),\qquad \Real(s)>1,
\end{equation}
where $M/k$ is some constant extension and the function $P(s)$ admits an analytic continuation to $\Real(s) > \frac{1}{2}$, except for possible algebraic singularities located along $\Real(s) = 1$ that are distinct from those of $\zeta_M(s)$. Moreover, the real part of the order of singularities from $P(s)$ on $\Real(s) = 1$ is strictly less than $m(\rho)$. In particular, $P(s)$ is holomorphic and non-vanishing at $s=1$.
\end{proposition}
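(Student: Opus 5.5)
Since $\rho$ is $S$-frobenian with class function $\varphi$ on $\Gamma=\Gal(L/k)$, expand $\varphi$ in irreducible characters, $\varphi=\sum_{\chi}a_\chi\chi$ with $a_\chi=\langle\varphi,\chi\rangle$. Compare $F(s)$ with the product $\prod_\chi \operatorname{L}(\chi,s)^{a_\chi}$ (after enlarging $S$ to contain the ramified places, at the cost of finitely many Euler factors that are holomorphic and non-vanishing for $\Real(s)>0$ by $|\rho(v)|<q_v$). Taking logarithms, $\log F(s)=\sum_{v\notin S}\rho(v)q_v^{-s}+O\bigl(\sum_v q_v^{-2\Real s}\bigr)$ and $\log\operatorname{L}(\chi,s)=\sum_v\chi(\Frob_v)q_v^{-s}+O\bigl(\sum_v q_v^{-2\Real s}\bigr)$. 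The error terms converge absolutely for $\Real(s)>\tfrac12$. Hence
\[
F(s)=G(s)\prod_{\chi}\operatorname{L}(\chi,s)^{a_\chi},
\]
with $G$ holomorphic and non-vanishing on $\Real(s)>\tfrac12$. Here $\operatorname{L}(\chi,s)^{a_\chi}$ is defined via the absolutely convergent Dirichlet series for $\log\operatorname{L}$ on $\Real(s)>1$.

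Next split the characters into non-constant and constant ones. For non-constant $\chi$, Theorem~\ref{lemma_Sawin} says $Z(\chi,u)$ is a polynomial with all roots of absolute value $q^{-1/2}$. Hence $\operatorname{L}(\chi,s)$ is holomorphic and non-vanishing on $\Real(s)>\tfrac12$, so $\log\operatorname{L}(\chi,s)$ continues there, and these factors may be absorbed into $P(s)$. For the constant characters $\chi\in X$, Theorem~\ref{lemma_Sawin} gives $\operatorname{L}(\chi,s)=Z_k(\chi(\Frob_q)q^{-s})$. By Lemma~\ref{lemma:value_zeta_function}, this function is non-vanishing on $\Real(s)>\tfrac12$ and has only simple poles, located at the points $s$ with $q^{1-s}=\overline{\chi(\Frob_q)}$, all on $\Real(s)=1$. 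The coefficient is $a_\chi=m(\rho\bar\chi)$. Let $H$ be as in Corollary~\ref{cor:extremal_subgroup} and let $M$ be the associated constant extension. For $\chi\in H$ we have $a_\chi=m(\rho)$, and the corollary gives
\[
\prod_{\chi\in H}\operatorname{L}(\chi,s)^{m(\rho)}=\zeta_M(s)^{m(\rho)}.
\]
This yields \eqref{eq:Ftozeta} with
\[
P(s)=G(s)\prod_{\chi\text{ non-const}}\operatorname{L}(\chi,s)^{a_\chi}\prod_{\chi\in X\setminus H}\operatorname{L}(\chi,s)^{m(\rho\bar\chi)}.
\]

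The remaining step, which I expect to be the main obstacle, is controlling the singularities of $P$ on $\Real(s)=1$. Each $\chi\in X\setminus H$ contributes an algebraic singularity of order $m(\rho\bar\chi)$ at each pole of $\operatorname{L}(\chi,s)$. Lemma~\ref{lem_mean_frob_real_nonneg} gives $|m(\rho\bar\chi)|\le m(\rho)$, with equality in real part only when $\chi\in H$; thus $\Real m(\rho\bar\chi)<m(\rho)$ for $\chi\notin H$. Two points need care.

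\emph{First}, distinct constant characters have disjoint pole sets, since the value $\chi(\Frob_q)$ determines $\chi$. So the singularities of $P$ are located at points different from the poles of $\zeta_M(s)$ (those correspond to $\chi\in H$), and singularities from different $\chi\notin H$ do not combine.

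\emph{Second}, the local form $(1-z/w)^{\alpha}g(z)$ must be verified. Near a pole $w$ of $\operatorname{L}(\chi,s)$, write $\operatorname{L}(\chi,s)=(s-w)^{-1}h(s)$ with $h$ holomorphic and non-vanishing. All other factors of $P$ are holomorphic and non-vanishing near $w$. Then $\operatorname{L}(\chi,s)^{a}=(s-w)^{-a}h(s)^{a}$ for a suitable branch. This requires choosing branches consistently with the Dirichlet-series logarithm on the half-plane $\Real(s)>1$; since the domain minus the singular rays is simply connected, this is possible. If $a_\chi=0$ or a non-positive integer there is no genuine singularity, which is harmless.

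Finally, $s=1$ is a pole only of the trivial character's L-function, and the trivial character lies in $H$. Hence $P$ is holomorphic and non-vanishing at $s=1$.
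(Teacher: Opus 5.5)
Your proposal is correct and follows essentially the same route as the paper's proof: expand $\varphi$ in irreducible characters, and use Theorem~\ref{lemma_Sawin} to dispose of the non-constant $\operatorname{L}$-functions and to rewrite the constant ones as shifted $Z_k$; then invoke Lemma~\ref{lem_mean_frob_real_nonneg} and Corollary~\ref{cor:extremal_subgroup} to extract $\zeta_M(s)^{m(\rho)}$ and bound the orders of the remaining singularities. One harmless slip: an Euler factor $1+\rho(v)q_v^{-s}$ with $1<\rho(v)<q_v$ vanishes on the line $\Real(s)=\log\rho(v)/\log q_v$, so your $G(s)$ is holomorphic but not necessarily zero-free on $\Real(s)>\tfrac12$ (only on $\Real(s)\ge 1$), which is all the statement actually needs.
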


\begin{proof}
This proof is inspired by \cite[Prop.~2.3]{Frei_2022}. We first observe the Euler factors $1+\rho(v)q_v^{-s}$ are holomorphic on $\CC$ and non-zero for $\Real(s)\geq 1$, as $|\rho(v)|<q_v$ by assumption.

Next, recall that the irreducible characters of a finite group $\Gamma$ form a basis for the space of complex class functions of $\Gamma$ (\cite[Prop.~2.30]{FultonHarris1991}). In particular, if $\varphi:\Gamma\to\CC$ is the class function associated to $\rho$, then we may write
\begin{equation*}
\varphi = \sum_{\chi}\lambda_\chi\chi,
\end{equation*}
where the sum runs over the irreducible characters of $\Gamma$ and $\lambda_\chi = \langle\varphi,\chi\rangle = \frac{1}{|\Gamma|}\sum_{\gamma\in\Gamma}\varphi(\gamma)\overline{\chi(\gamma)} = m(\rho\bar\chi)$, by the orthonormality of the characters.

For $\Real(s) > 1$, we find that
\begin{equation*}
F(s) = \prod_{v\notin S}\left(1+\frac{\sum_\chi\lambda_\chi\chi(\Frob_v)}{q_v^s}\right) = P_1(s)\prod_{\chi}\operatorname{L}(\chi,s)^{\lambda_\chi},
\end{equation*}
where $P_1(s)$ is a holomorphic function with absolutely convergent Euler product on $\Real(s)>1/2$. Now group them by whether $\chi$ is constant or not:
\begin{equation}\label{Eq_Frob_decomp_into_constant_nonconst}
F(s) = P_2(s)\left(\prod_{\substack{\chi\\ \text{constant}}}\operatorname{L}(\chi,s)^{\lambda_\chi}\right)\left(\prod_{\substack{\chi\\ \text{non-constant}}}\operatorname{L}(\chi,s)^{\lambda_\chi}\right).
\end{equation}
Now, by Theorem~\ref{lemma_Sawin}, the third factor in \eqref{Eq_Frob_decomp_into_constant_nonconst} is holomorphic and non-vanishing for $\Real(s)>1/2$.
By \eqref{eq_Z(chiu)_is_shift}, the second factor in \eqref{Eq_Frob_decomp_into_constant_nonconst} becomes
\begin{equation}\label{Eq_second_factor_into_Z}
\prod_{\substack{\chi\\ \text{constant}}}Z(\chi(\Frob_q)u)^{\lambda_\chi}.
\end{equation}
Here, $Z(\chi(\Frob_q)u)$ is zero-free for $\Real(s)>1/2$ by the Riemann hypothesis (Lemma~\ref{lemma:value_zeta_function}) and $Z(\chi(\Frob_q)u)$ has poles of order $1$ along $\Real(s)=1$. Moreover, their location along $\Real(s)=1$ are distinct as the image of $\Frob_q$ under different $\chi$ is different, $\Frob_q$ being a generator of the cyclic group $\Gal(K/k)$.

By Lemma~\ref{lem_mean_frob_real_nonneg} \textit{(1)}, $|m(\rho\bar\chi)| \leq m(\rho)$. We take those $\chi$ with $\Real(m(\rho\bar\chi)) = m(\rho)$, group them together yields:
\begin{equation}\label{equ_Frob_proof_P3}
F(s) = P_3(s)\left(\prod_{\substack{\chi\ \text{constant}\\ m(\rho\bar\chi)=m(\rho)}}Z(\chi(\Frob_q)u)\right)^{m(\rho)},
\end{equation}
we have used Lemma~\ref{lem_mean_frob_real_nonneg} \textit{(2)}, $\Real(m(\rho\bar\chi))=m(\rho)$ if and only if $m(\rho\bar\chi)=m(\rho)$. Here $P_3(s)$ admits analytic continuation for $\Real(s)>\frac{1}{2}$, except for possible algebraic singularities along $\Real(s)=1$, which are disjoint from the second factor in \eqref{equ_Frob_proof_P3}. For the remaining constant $\chi$, the singularities of $Z(\chi(\Frob_q)u)^{\lambda_\chi}$ along $\Real(s)=1$ lie at the poles of $Z(\chi(\Frob_q)u)$, which are simple, and are algebraic of order $\lambda_\chi$ in the sense of Definition~\ref{def:algebraic-singularity}; moreover $\Real(\lambda_\chi)<m(\rho)$, since equality would give $m(\rho\bar\chi)=m(\rho)$ by Lemma~\ref{lem_mean_frob_real_nonneg} \textit{(2)} and place $\chi$ in the second factor. This proves the assertion on the orders.

By Corollary~\ref{cor:extremal_subgroup}, the characters appearing in the second factor of \eqref{equ_Frob_proof_P3} form a cyclic subgroup $\langle\chi_0\rangle$ of the group of constant characters, and the second factor equals the zeta function $\zeta_M(s)$ of an intermediate constant extension $k\subseteq M\subseteq K$.

It remains to prove the final assertion. A pole of $Z(\chi(\Frob_q)u)$ at $s=1$ would require $\chi(\Frob_q)=1$, hence $\chi$ trivial by the injectivity of $\chi\mapsto\chi(\Frob_q)$ noted above, and the trivial character appears in the second factor of \eqref{equ_Frob_proof_P3}; so $P(s)=P_3(s)$ is holomorphic at $s=1$. Moreover $P_1(1)\neq 0$, each non-constant factor $\operatorname{L}(\chi,1)^{\lambda_\chi}=\exp(\lambda_\chi\log\operatorname{L}(\chi,1))$ is non-zero, and for the remaining constant $\chi$, with $\alpha=\chi(\Frob_q)\neq 1$, the value $Z(\alpha q^{-1})=\mathcal L(\alpha q^{-1})/\bigl((1-\alpha q^{-1})(1-\alpha)\bigr)$, obtained from Lemma~\ref{lemma:value_zeta_function} at $q^{-s}=\alpha q^{-1}$, is finite and non-zero: $\mathcal L(\alpha q^{-1})\neq 0$ since $|\alpha q^{-1}|=q^{-1}$ while the roots of $\mathcal L$ have absolute value $q^{-1/2}$, and $(1-\alpha q^{-1})(1-\alpha)\neq 0$ since $|\alpha q^{-1}|<1$ and $\alpha\neq 1$. Hence $P(1)\neq 0$.
\end{proof}

\section{Dirichlet series and local Fourier transforms}
In this section, we develop the harmonic analysis over global function fields that will be used to study our Dirichlet series. Our main objective is to analyse the local Fourier transforms arising in this context, in particular their convergence properties and the singularities they contribute to the global Dirichlet series.
\subsection{Class field theory}For a subgroup $H\leq G$, the inclusion $H\subseteq G$ identifies $\Hom(\Gamma_k,H)$ with the set of $\varphi\in\Hom(\Gamma_k,G)$ whose image is contained in $H$; we write $F_{H,W}(s)$ for the series \eqref{DserieswithLC} formed with $H$-extensions of $k$, the function $f_W$ being evaluated through this inclusion, and we define
\begin{equation}
\widetilde F_{H,W}(s):=\sum_{\varphi \in \Hom(\Gamma_k,H)} \frac{f_{W}(\varphi)}{\Phi(\varphi)^s}.
\end{equation}
Here $\Phi(\varphi)$ does not depend on the target group, as it is determined by $\ker\varphi$. Every $\varphi\in\Hom(\Gamma_k,H)$ is surjective onto its image, so that $\widetilde F_{H,W}(s)=\sum_{J\leq H}F_{J,W}(s)$, and M\"obius inversion over the lattice of subgroups of $G$ gives
\begin{equation}\label{removesubgroups}
F_{G,W}(s)=\sum_{H\leq G}\mu(G/H)\,\widetilde F_{H,W}(s),
\end{equation}
where $\mu(G/H)$ denotes the M\"obius function of the interval $[H,G]$ in the lattice of subgroups of $G$; for a finite abelian group $A$, $\mu(A)=0$ unless $A$ has squarefree exponent, and $\mu(A)=\prod_p(-1)^{r_p}p^{\binom{r_p}{2}}$ when $A\cong\prod_pC_p^{r_p}$.  We will see in Proposition~\ref{which_cha_contri}\textit{(2)} that the terms with $H\lneq G$ in \eqref{removesubgroups} do not contribute to the leading terms of the asymptotic formulas. It therefore suffices to study $\widetilde F_{H,W}(s)$ for each subgroup $H\leq G$; as $H$ is a finite abelian group, we present the analysis for $\widetilde F_{G,W}(s)$, and the same applies to every subgroup.

Using the global Artin map $\mathbf{A}^* / k^* \rightarrow$ $\operatorname{Gal}\left(k^{\mathrm{ab}} / k\right)$,  we have the identification:
$$
\operatorname{Hom}(\operatorname{Gal}(\bar{k} / k), G)=\operatorname{Hom}\left(\mathbf{A}^* / k^*, G\right).
$$
It follows that
\begin{equation*}
    \sum_{\varphi \in \operatorname{sub}-G-\operatorname{ext}(k)} \frac{f_{W}(\varphi)}{\Phi(\varphi)^s} =  \sum_{\chi \in \operatorname{Hom}\left(\mathbf{A}^* / k^*, G\right)} \frac{f_{W}(\varphi)}{\Phi(\chi)^s},
\end{equation*}
where $\Phi(\chi)$ is the reciprocal of the idelic norm of the conductor of the kernel of $\chi$, which is the norm of the conductor of the sub-$G$-extension corresponding to $\chi$.

Note that we may also identify
$$
\operatorname{Hom}\left(\operatorname{Gal}\left(\bar{k}_v / k_v\right), G\right)=\operatorname{Hom}\left(k_v^*, G\right),
$$
using the local Artin map $ k_v^* \rightarrow \operatorname{Gal}\left(k_v^{\mathrm{ab}} / k_v\right)$.
\subsection{Harmonic analysis}
We start with a topological result on ideles of function fields:
\begin{lemma}\label{TopolocyofA}
Let $k$ be a global function field and let  $n \in \NN$. Then  $\AS^{*n}_k\subseteq\AS^*_k$ is closed. 
\end{lemma}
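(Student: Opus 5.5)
We must show $\AS_k^{*n}$ is closed in $\AS_k^*$. The plan is to exhibit $\AS_k^{*n}$ as a product of local pieces and use that, for restricted products, such a product is closed as soon as each local factor is closed and, at almost all places, the local factor contains an open neighbourhood structure compatible with the units. Concretely, for $n\ge1$ we have
\[
\AS_k^{*n}=\Bigl\{(x_v)\in\AS_k^*: x_v\in k_v^{*n}\text{ for all }v\Bigr\}.
\]
The inclusion $\subseteq$ is clear. For $\supseteq$, given $x_v=y_v^n$, we must choose the $y_v$ so that $y_v\in\OO$ for almost all $v$. For almost all $v$, $x_v\in\OO$, and any $n$-th root $y_v$ then satisfies $n\,v(y_v)=v(x_v)=0$, so $y_v\in\OO$ automatically. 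Thus $(y_v)$ is an idele and $x=y^n$. No tameness assumption is needed here, so wild $p\mid n$ is harmless.

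Next, I show that each $k_v^{*n}$ is closed in $k_v^*$. The group $k_v^*\cong\ZZ\times\F_v^*\times\OOone$ is a locally compact group, and $\OO$ is compact. Hence $\OO^{\,n}$ is compact and therefore closed. Moreover, $k_v^{*n}=\bigcup_{m}\pi_v^{nm}\OO^{\,n}$ is a union of cosets of $\OO^{\,n}$ inside the distinct open-closed shells $\pi_v^{nm}\OO$. This union is locally finite: each shell $\pi^j\OO$ is open and meets at most one of these sets. So $k_v^{*n}$ is closed. Note that in characteristic $p$, $\OOone^{\,p}$ need not be open (it has infinite index in $\OOone$), so one cannot argue via openness. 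Compactness is the right tool, and it is exactly what handles the wild case.

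Finally, the product $\prod_v k_v^{*n}\cap\AS_k^*$ is closed in the restricted product topology. Take $x\notin\AS_k^{*n}$. Then some $x_w\notin k_w^{*n}$. By local closedness there is an open $U_w\ni x_w$ disjoint from $k_w^{*n}$, and $U_w\times\prod_{v\ne w}(\text{basic open})$ is an open neighbourhood of $x$ in $\AS_k^*$ missing $\AS_k^{*n}$, using the componentwise description established above. The main point requiring care is the first step: the description of $\AS_k^{*n}$ as the set of everywhere-local $n$-th powers. This is precisely where the restricted-product condition could fail, and the valuation argument resolves it. The remaining steps are formal once the compactness of $\OO$ is in hand.
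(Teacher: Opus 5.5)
Your proof is correct and takes essentially the same route as the paper. Compactness of the local units makes $k_v^{*n}$ closed in $k_v^*$, and then $\AS_k^{*n}$ is cut out of $\AS_k^*$ by the local conditions. The paper phrases this last step as pulling back the closed set $\prod_v k_v^{*n}$ along the continuous inclusion $\AS_k^*\hookrightarrow\prod_v k_v^*$, which is equivalent to your neighbourhood argument.

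Your write-up also supplies two steps the paper uses without comment. The first is the valuation argument giving $\AS_k^*\cap\prod_v k_v^{*n}=\AS_k^{*n}$. The second is the shell decomposition that passes from $(\mathcal{O}_v^*)^n$ being closed to $k_v^{*n}$ being closed.
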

\begin{proof}
Recall the isomorphism between topological groups $\OO \cong (1+\mathfrak{m}_v)\times\F_v^*$, where $\mathfrak{m}_v$ is the maximal ideal of $\mathcal{O}_v$, \cite[Prop.~5.3.]{AlgNTNeu}. Therefore, to show $\mathcal{O}_v^{*n} \cong (1+\mathfrak{m}_v)^n\times\F_v^{*n}$ is closed in $\OO \cong (1+\mathfrak{m}_v)\times\F_v^*$ it suffices to show that $(1+\mathfrak{m}_v)^n$ is closed in $ 1+\mathfrak{m}_v$.

Consider the following continuous map
    $$
    (\cdot)^n:(1+\mathfrak{m}_v) \to (1+\mathfrak{m}_v),
    \\
    x \mapsto x^n,
    $$
    whose image $(1+\mathfrak{m}_v)^n $ is compact as $(1+\mathfrak{m}_v)$ is compact. 
Since $(1+\mathfrak{m}_v)$ is Hausdorff, it follows that $(1+\mathfrak{m}_v)^n$ is closed, hence $k_v^{*n} \subseteq k_v^*$ is closed

Now, consider the inclusion between topological spaces
    $$
    i: \left(\AS^*_k,\mathscr{T}_1\right) \to  \left(\prod_{v\in \Omega_k}k_v^*,\mathscr{T}_2\right),
    $$
    where $\mathscr{T}_1$ is the restricted product topology and $\mathscr{T}_2$ is the product topology. In particular, $\prod_{v\in \Omega_k}k_v^{*n}$ is closed in $\prod_{v\in \Omega_k}k_v^*$.
    Hence, $i^{-1}(\prod_{v\in \Omega_k}k_v^{*n}) = \AS_k^* \cap \prod_{v\in \Omega_k}k_v^{*n} = \AS_k^{*n}$ is closed in $\AS_k^*$.
\end{proof}  
\begin{remark}
    Lemma~\ref{TopolocyofA} may suggest that the topology of $k_v$ does not depends on the characteristic; however,  openness is  more subtle. 
    According to \cite{MOLubin}, if $K$ is a local field with characteristic $p$, then $K^{*n} \subseteq K^*$ is open if and only if $p \nmid n$.
\end{remark}
For a locally compact group $A$, we denote the Pontryagin dual of $A$ by $ A^{\wedge}:=\operatorname{Hom}\left(A, S^1\right)$. 
By Lemma \ref{TopolocyofA} and \cite[\S3.1.]{MR3884640}, we may identify the Pontryagin dual of $\operatorname{Hom}\left(\mathbf{A}^* / k^*, G\right)$ with $\mathbf{A}^* / k^* \otimes G^{\wedge}$.

We denote the associated pairing by $\langle\cdot, \cdot\rangle: \operatorname{Hom}\left(\mathbf{A}^* / k^*, G\right) \times\left(\mathbf{A}^* / k^* \otimes G^{\wedge}\right) \rightarrow S^1$. Similarly, the Pontryagin dual of $\operatorname{Hom}\left(k_v^*, G\right)$ is naturally identified with $k_v^* \otimes G^{\wedge}$, and we also denote the relevant Pontryagin pairing by $\langle\cdot, \cdot\rangle$. For each place $v$, we equip the finite group $\operatorname{Hom}\left(k_v^*, G\right)$ with the unique Haar measure $\mathrm{d} \chi_v$ such that
$$
\operatorname{vol}\left(\operatorname{Hom}\left(k_v^* / \mathcal{O}_v^*, G\right)\right)=1.
$$
Since $|\Hom(k_v^*/\OO, G)| = |G|$, this is $|G|^{-1}$ times the counting measure for any place. The product of these measures yields a well-defined measure $\mathrm{d} \chi$ on $\operatorname{Hom}\left(\mathbf{A}^*, G\right)$. 

For $x_v \in k_v^* \otimes G^{\wedge}$ we have the local Fourier transform
$$
\widehat{f}_{v, G}\left(x_v ; s\right)=\int_{\chi_v \in \operatorname{Hom}\left(k_v^*, G\right)} \frac{f_{v}(\chi_v)\left\langle\chi_v, x_v\right\rangle}{\Phi_v\left(\chi_v\right)^s} \mathrm{~d} \chi_v,
$$
where $\Phi_v\left(\chi_v\right)$ is the reciprocal of the $v$-adic norm of the conductor of Ker $\chi_v$. We use $\widehat{f}_v$ for shorthand when there is no ambiguity. If $G$ is tame, then $\widehat{f}_{v, G}\left(x_v ; s\right)$ is a finite sum. If $G$ is not tame, then it is not even clear whether $\widehat{f}_{v, G}\left(x_v ; s\right)$ converges absolutely. This is our first aim in the next subsection.
\subsection{Calculation of local Fourier transforms}
In this section, we will be using higher unit groups $\OOj = 1 + \pi^j_v\mathcal{O}$. We begin with the following descending filtration of higher unit groups
\begin{equation}\label{HigherUG}
    \cdots \subseteq \OOj = 1 + \pi^j_v\mathcal{O} \subseteq \cdots \subseteq  \OOtwo \subseteq  \OOone \subseteq  \OO.
\end{equation}
According to \cite[Chap.~III, §2]{Serre_1979}, these subgroups form a fundamental system of neighborhoods of $1$ in $\mathcal{O}_v^*$ with respect to the $\mathfrak {m}_v$-adic topology.  
This implies that $\bigcap_{j\geq 1} \OOj = \{1\}$, and $\bigcap_{j\ge 1}\mathfrak{m}^j_v=\{0\}$.
\begin{lemma}\label{trivialintersection}
Let $n \geq 1$. For each $j\ge 1$, the inclusion $\mathcal{O}_{v,j}^*\hookrightarrow \mathcal{O}_{v,1}^*$ induces a homomorphism
\[
\iota_{j,n}:\ \mathcal{O}_{v,j}^*/(\mathcal{O}_{v,j}^*)^{n}\ \longrightarrow\ \mathcal{O}_{v,1}^*/(\mathcal{O}_{v,1}^*)^{n}.
\]
Then
\[
\bigcap_{j\ge 1}\ \iota_{j,n}\!\big(\mathcal{O}_{v,j}^*/(\mathcal{O}_{v,j}^*)^{n}\big)\;=\;\{1\}\ \subset\ \mathcal{O}_{v,1}^*/(\mathcal{O}_{v,1}^*)^{n}.
\]
\end{lemma}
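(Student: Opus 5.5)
The image of $\iota_{j,n}$ is the subgroup $\mathcal{O}_{v,j}^*(\mathcal{O}_{v,1}^*)^n/(\mathcal{O}_{v,1}^*)^n$. The plan is therefore to show the equality of subgroups of $U:=\mathcal{O}_{v,1}^*$
\[
\bigcap_{j\ge 1}\mathcal{O}_{v,j}^*\,U^n \;=\; U^n,
\]
and to get it from two facts: $U^n$ is closed, and the $\mathcal{O}_{v,j}^*$ form a neighbourhood basis of $1$.

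First, record that $U^n$ is closed in $U$. This is exactly the step in the proof of Lemma~\ref{TopolocyofA}: the map $x\mapsto x^n$ is continuous on the compact group $U$, so its image $U^n$ is compact. Since $U$ is Hausdorff, $U^n$ is closed.

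Next, take $x\in\bigcap_{j}\mathcal{O}_{v,j}^*U^n$. For each $j$ write $x=u_jc_j$ with $u_j\in\mathcal{O}_{v,j}^*$ and $c_j\in U^n$. The subgroups $\mathcal{O}_{v,j}^*$ form a fundamental system of neighbourhoods of $1$ by the filtration \eqref{HigherUG}, so $u_j\to 1$. Hence $c_j=u_j^{-1}x\to x$. As $U^n$ is closed, $x\in U^n$. This shows that the intersection of the images is trivial in $U/U^n$. The reverse inclusion is obvious.

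There is no real obstacle here. The only point to take care over is identifying the image of $\iota_{j,n}$ with $\mathcal{O}_{v,j}^*U^n/U^n$ and noting that the intersection of these images corresponds to the intersection of the preimages in $U$, which holds because each preimage contains $U^n$. One could also remark that when $p\nmid n$ the statement is trivial, since the pro-$p$ group $U$ satisfies $U^n=U$. The argument above, however, covers all $n$ uniformly, including the wild case $p\mid n$, where $U^n$ is closed but not open.
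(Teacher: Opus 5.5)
Your argument is correct, but it takes a different route from the paper's. The paper splits into cases and glues them with the Chinese remainder theorem:
\begin{itemize}
\item for $p\nmid n$, Hensel's lemma shows that $\mathcal{O}_{v,j}^*$ is $n$-divisible, so the quotients are already trivial;
\item for $n=p^t$, it describes the $p^t$-th powers explicitly via $(1+x)^{p^t}=1+x^{p^t}$.
\end{itemize}

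You instead reduce the statement, with $U=\mathcal{O}_{v,1}^*$, to the identity $\bigcap_j \mathcal{O}_{v,j}^*U^n=\overline{U^n}=U^n$. This needs only two facts: $U^n$ is closed (the compactness argument already in the proof of Lemma~\ref{TopolocyofA}), and the $\mathcal{O}_{v,j}^*$ form a neighbourhood basis of $1$. Your route treats all $n$ uniformly and needs no information about which elements are $n$-th powers.

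Your route is also the one that actually works. The paper's $p$-part step asserts $(\mathcal{O}_{v,j}^*)^{p^t}=1+\mathfrak m_v^{jp^t}$, and this is false. For example, $1+\pi_v^{p^t+1}\in 1+\mathfrak m_v^{p^t}$ is not a $p^t$-th power of an element $1+x\in U$, since $x^{p^t}=\pi_v^{p^t+1}$ would force $p^t\,v(x)=p^t+1$. More conceptually, if that equality held then $U^{p^t}$ would be open, contradicting the remark after Lemma~\ref{TopolocyofA}. Repairing the paper's step essentially requires knowing that $U^{p^t}$ is closed, which is exactly your argument. Your closing observation, that in the wild case $U^n$ is closed but not open, identifies precisely where the paper's argument fails.
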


\begin{proof}
First assume that $n=p^t$ with $t\ge 1$.
For any $x\in \mathfrak m_v^{\,j}$ one has $(1+x)^{p^t}=1+x^{p^t}$, hence
$(\mathcal{O}_{v,j}^*)^{p^t}=1+\mathfrak m_v^{\,j p^t}.$
Let $[u]\in \mathcal{O}_{v,1}^*/(\mathcal{O}_{v,1}^*)^{p^t}$ lie in the image of $\iota_{j,p^t}$ for every $j\ge 1$.
Thus, for each $j$ there exists $v_j\in \mathcal{O}_{v,j}^*$ with $[u]=[v_j]$ in $\mathcal{O}_{v,1}^*/(\mathcal{O}_{v,1}^*)^{p^t}$, i.e.
$
u\,v_j^{-1}\in (\mathcal{O}_{v,1}^*)^{p^t}=1+\mathfrak m_v^{\,p^t}.
$
Write $v_j=1+a_j$ with $a_j\in \mathfrak m_v^{\,j}$ and $u\,v_j^{-1}=1+b_j$ with $b_j\in \mathfrak m_v^{\,p^t}$. Then
$
u=(1+a_j)(1+b_j)\in 1+\mathfrak m_v^{\,\min(j,p^t)}.
$
Since this holds for all $j$, taking any $j\ge p^t$ yields
$
u\in 1+\mathfrak m_v^{\,p^t}=(\mathcal{O}_{v,1}^*)^{p^t},
$
so $[u]=1$ in $\mathcal{O}_{v,1}^*/(\mathcal{O}_{v,1}^*)^{p^t}$.

If $\gcd(n,p)=1$,
we fix $j\ge 1$ and $a\in \mathcal{O}_{v,j}^*$. Consider $f(X)=X^n-a$ on $\mathcal{O}_{v,j}^*$.
We have $f(1)\equiv 0\pmod{\mathfrak m_v^{\,j}}$ and $f'(1)=n\in \mathcal{O}_v^{\times}$ (since $p\nmid n$).
By Hensel's lemma, the map $u\mapsto u^n$ is a bijection $\mathcal{O}_{v,j}^*\!\to\!\mathcal{O}_{v,j}^*$.
Hence $(\mathcal{O}_{v,j}^*)^{n}=\mathcal{O}_{v,j}^*$ and the quotient
$\mathcal{O}_{v,j}^*/(\mathcal{O}_{v,j}^*)^{m}$ is trivial for every $j$; consequently the intersection of their images is $\{1\}$.

The assertion now follows as $\mathcal{O}_{v,j}^*/(\mathcal{O}_{v,j}^*)^{mn} \cong \mathcal{O}_{v,j}^*/(\mathcal{O}_{v,j}^*)^{m} \times \mathcal{O}_{v,j}^*/(\mathcal{O}_{v,j}^*)^{n}$ whenever $\gcd(m,n)=1.$
\end{proof}
Furthermore, we have the following result regarding the structure of $\OOone/\OOj$ from page 17 of \cite{MR1785410}.
\begin{proposition}\label{quotientofhigherunitgroup}
 Let $v$ be a place of $k, \pi_v \in k_{v}$ a uniformizer at $v$, such that $B_v$ as $\mathbb{F}_p$-basis of $\mathbb{F}_{v}$ and $n \in \mathbb{N}$.
For a rational number $u  \in \QQ$, let 
$$
\left\lceil u \right\rceil_p := \min \left\{l : u \leq p^l, l \in \mathbb{N}_0\right\}= \begin{cases}
    \left\lceil \log_p  u \right\rceil, &u \geq 1,\\
    0 ,  &u \leq 1.
\end{cases}
$$
Moreover, denote by $\mathbb{N}_p^*:=\mathbb{N} \backslash p \mathbb{Z}$ the set of positive integers prime to $p$.
Then there is a non-canonical isomorphism of finite p-groups
$$
\begin{aligned}
\mu^{(j)}:  \OOone / \OOj &\rightarrow \prod_{t \in \mathbb{N}_p^*}\left(\mathbb{Z} / \mathbb{Z}p^{\left\lceil\frac{j}{t}\right\rceil_p}\right)^{B_v}, \\
\prod_{\substack{t \in \mathbb{N}_p^* \\
\beta \in B_v}}\left(1+\beta \pi^t\right)^{m_{t \beta}} \OOj &\mapsto\left(m_{t \beta}+{\mathbb{Z}p^{\left\lceil\frac{j}{t}\right\rceil_p}}\right)_{\substack{t \in \mathbb{N}_p^* \\
\beta \in B_v}}.
\end{aligned}
$$
\end{proposition}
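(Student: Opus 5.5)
The plan is to build the inverse map
$$
\nu^{(j)}:\prod_{t\in\mathbb{N}_p^*}\bigl(\mathbb{Z}/\mathbb{Z}p^{\lceil j/t\rceil_p}\bigr)^{B_v}\longrightarrow \OOone/\OOj,
\qquad (m_{t\beta})\mapsto \prod_{t,\beta}(1+\beta\pi_v^t)^{m_{t\beta}}\,\OOj ,
$$
show that it is a well-defined surjective homomorphism, and then conclude by comparing cardinalities. Throughout I use that $k_v\cong\F_v((\pi_v))$ has characteristic $p$. Hence $(1+x)^{p^l}=1+x^{p^l}$, as already used in Lemma~\ref{trivialintersection}, and $\F_v$ is perfect.

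\emph{Well-definedness.} Only finitely many $t$ matter. If $t\ge j$ then $1+\beta\pi_v^t\in\OOj$, and also $\lceil j/t\rceil_p=0$, so the corresponding factor of the source is trivial. Since $\OOone/\OOj$ is abelian, the ordering of the product is irrelevant. Moreover $(1+\beta\pi_v^t)^{p^l}=1+\beta^{p^l}\pi_v^{tp^l}$ lies in $\OOj$ exactly when $tp^l\ge j$, that is, when $l\ge\lceil j/t\rceil_p$. So each generator $1+\beta\pi_v^t$ has order dividing $p^{\lceil j/t\rceil_p}$ modulo $\OOj$. Therefore $\nu^{(j)}$ is a well-defined homomorphism.

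\emph{Surjectivity.} I argue by successive approximation along the filtration \eqref{HigherUG}. Suppose $u\in\OOone$ and $u\equiv 1+c\pi_v^i \pmod{\OO_{v,i+1}^*}$ with $1\le i<j$ and $c\in\F_v$ (via Teichmüller representatives, i.e.\ the coefficient field $\F_v\subset\mathcal O_v$). Write $i=tp^s$ with $p\nmid t$. Since $\F_v$ is perfect, $c=d^{p^s}$ for some $d\in\F_v$. Expand $d=\sum_{\beta\in B_v}a_\beta\beta$ with $a_\beta\in\F_p$. Then
$$
\prod_\beta(1+\beta\pi_v^t)^{a_\beta}\equiv 1+d\pi_v^t \pmod{\OO_{v,t+1}^*},
$$
and raising to the $p^s$-th power gives
$$
\prod_\beta(1+\beta\pi_v^t)^{a_\beta p^s}\equiv 1+c\pi_v^{i}\pmod{\OO_{v,i+1}^*}.
$$
Dividing $u$ by this product moves it into $\OO_{v,i+1}^*$. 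Induction on $i$ from $1$ to $j-1$ shows that $u\OOj$ lies in the image of $\nu^{(j)}$.

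\emph{Counting.} The left side has order $|\OOone/\OOj|=q_v^{\,j-1}=p^{f(j-1)}$, where $f=|B_v|=[\F_v:\F_p]$. The source has order $p^{f\sum_{t}\lceil j/t\rceil_p}$, with the sum over $t\in\mathbb{N}_p^*$, $t<j$. Every integer $1\le i\le j-1$ is uniquely of the form $tp^s$ with $p\nmid t$. For fixed $t$, the number of $s\ge0$ with $tp^s<j$ is the least $l$ with $tp^l\ge j$, which is $\lceil j/t\rceil_p$. Hence $\sum_t\lceil j/t\rceil_p=j-1$.

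So $\nu^{(j)}$ is a surjection between finite groups of the same order, hence an isomorphism, and $\mu^{(j)}:=(\nu^{(j)})^{-1}$ is the stated map. The isomorphism is non-canonical because it depends on $\pi_v$ and $B_v$.

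The only delicate step is the surjectivity induction. There one must check that the $p^s$-th power of the first-order approximation $1+d\pi_v^t$ still only contributes error terms in $\OO_{v,i+1}^*$. This follows from the Frobenius identity: the error in $\OO_{v,t+1}^*$ is sent into $\OO_{v,(t+1)p^s}^*\subseteq\OO_{v,i+1}^*$. Once that check is in place, the counting identity is elementary.
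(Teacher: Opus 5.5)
\textbf{Verdict.} Your proof is correct and complete.

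\textbf{Checking the steps.}
\begin{enumerate}
\item The orders of the generators follow from $(1+\beta\pi_v^t)^{p^l}=1+\beta^{p^l}\pi_v^{tp^l}$.
\item The successive approximation is sound. Raising to the $p^s$-th power sends the error term in $\mathcal{O}_{v,t+1}^*$ into $\mathcal{O}_{v,(t+1)p^s}^*\subseteq\mathcal{O}_{v,i+1}^*$, and you checked this.
\item The identity $\sum_t\lceil j/t\rceil_p=j-1$ is exactly the unique decomposition $i=tp^s$, with $p\nmid t$, of the integers $1\le i\le j-1$.
\end{enumerate}
A surjective homomorphism between finite groups of equal order is an isomorphism. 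This gives both well-definedness and uniqueness of the representation in the stated formula for $\mu^{(j)}$.

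\textbf{Comparison with the paper.} The paper gives no argument of its own; it quotes the result from page~17 of its reference. The usual textbook proof of this fact works at infinite level. One shows that every element of $\mathcal{O}_{v,1}^*$ is a uniquely determined convergent product $\prod(1+\beta\pi_v^t)^{a_{t\beta}}$ with $a_{t\beta}\in\mathbb{Z}_p$ and $p\nmid t$. Uniqueness comes from a leading-term argument, using that the integers $tp^s$ with $p\nmid t$ are pairwise distinct and that $B_v$ is $\mathbb{F}_p$-linearly independent. The finite quotients are then obtained by reducing exponents.

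Your version stays at finite level and replaces that uniqueness argument by a cardinality count. It is shorter, more elementary, and gives exactly what Proposition~\ref{MainCount} needs. The infinite-level route gives more: the full $\mathbb{Z}_p$-module structure of $\mathcal{O}_{v,1}^*$, with all the quotients $\mathcal{O}_{v,1}^*/\mathcal{O}_{v,j}^*$ described compatibly at once. The price is a convergence and uniqueness argument.
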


\begin{lemma}\label{absconvpre}
    Let $v \in \Omega_k$ and $x_v \in k_v^* \otimes 
    G^{\wedge}$. Then 
    \begin{equation*}
        |\widehat{f}_v\left(x_v ; s\right)|
        \leq 1 + \frac{|\Hom (\F_{q_v}^*, G_t)| - 1 }{q_v^{\operatorname{Re}(s)}} + \sum_{j = 2}^{\infty}  \frac{|\Hom(\OO/\OOj,G)|-|\Hom(\OO/\OOjs,G)|}{q_v^{j\operatorname{Re}(s)}}.
    \end{equation*}
\end{lemma}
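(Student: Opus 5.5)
The plan is to bound the local Fourier transform by the triangle inequality and then sort the local characters according to their conductor exponent. Since $|f_v|\le 1$, $|\langle\chi_v,x_v\rangle|=1$ and $\mathrm d\chi_v$ is $|G|^{-1}$ times counting measure, we get
\[
|\widehat f_v(x_v;s)|\le \frac{1}{|G|}\sum_{\chi_v\in\Hom(k_v^*,G)}\Phi_v(\chi_v)^{-\Real(s)}.
\]
Here $\Phi_v(\chi_v)=q_v^{c(\chi_v)}$, where $c(\chi_v)$ is the conductor exponent: $c=0$ if $\chi_v$ is trivial on $\OO$, $c=1$ if $\chi_v$ is trivial on $\OOone$ but not on $\OO$, and in general $c=j$ if $\chi_v$ is trivial on $\OOj$ but not on $\OOjs$.

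Next we parametrise the characters. Choosing a uniformizer gives $k_v^*\cong \pi_v^{\ZZ}\times\OO$, so $\Hom(k_v^*,G)\cong G\times\Hom(\OO,G)$. The factor $|G|$ coming from the value on $\pi_v$ cancels the $|G|^{-1}$ in the measure, and the conductor depends only on the restriction to $\OO$. We therefore obtain
\[
|\widehat f_v(x_v;s)|\le \sum_{\eta\in\Hom(\OO,G)} q_v^{-c(\eta)\Real(s)}.
\]
Because $G$ is discrete, every continuous $\eta$ has open kernel and hence factors through some $\OO/\OOj$; this is where the fundamental system of neighbourhoods \eqref{HigherUG} is used.

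Then we count the characters of each conductor. Those with $c\le j$ are exactly the elements of $\Hom(\OO/\OOj,G)$, so the number with $c=j$ is $|\Hom(\OO/\OOj,G)|-|\Hom(\OO/\OOjs,G)|$ for $j\ge 2$. For $c=0$ there is only the trivial character, which gives the term $1$. For $c=1$ we use $\OO/\OOone\cong\F_{q_v}^*$, a cyclic group of order prime to $p$, so its homomorphisms to $G$ land in $G_t$. This gives $|\Hom(\F_{q_v}^*,G_t)|-1$ characters with $c=1$. Summing over $c$ yields exactly the stated bound, as an inequality in $[0,\infty]$.

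The main point needing care is the reduction from $k_v^*$ to $\OO$: we must check that the measure normalisation makes the factor coming from the $\pi_v$-component exactly cancel, and that the conductor exponent is the smallest $j$ with $\OOj\subseteq\ker\eta$ (with the convention $\mathcal O_{v,0}^*=\OO$). Rearranging the sum is harmless because all terms are nonnegative.
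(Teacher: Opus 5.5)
Your proposal is correct and follows essentially the same route as the paper: apply the triangle inequality, then split $\Hom(k_v^*,G)$ via a uniformizer so that the factor $|G|$ cancels the measure normalisation. After that, group the characters of $\mathcal{O}_v^*$ by conductor exponent and count those of exponent exactly $j$ as $|\Hom(\mathcal{O}_v^*/\mathcal{O}_{v,j}^*,G)|-|\Hom(\mathcal{O}_v^*/\mathcal{O}_{v,j-1}^*,G)|$. Your explicit handling of exponents $0$ and $1$ (where $\mathcal{O}_v^*/\mathcal{O}_{v,1}^*\cong\F_{q_v}^*$ has order prime to $p$, so homomorphisms land in $G_t$) and of continuity (open kernels contain some $\mathcal{O}_{v,j}^*$) only spells out steps the paper leaves implicit.
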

\begin{proof}
    By our choice of measures, we have
\begin{equation}\label{localFourier}
    \widehat{f}_{W_v}\left(x_v ; s\right)=\frac{1}{|G|} \sum_{\chi_v \in \operatorname{Hom}\left(k_v^*, G\right)} \frac{f_{W_v}(\varphi_v)\left\langle\chi_v, x_v\right\rangle}{\Phi_v\left(\chi_v\right)^s}.
\end{equation}
Taking absolute value of (\ref{localFourier}) yields
\begin{align*}
 |\widehat{f}_v\left(x_v ; s\right)|
        \leq \frac{1}{|G|} \sum_{\chi_v \in \operatorname{Hom}\left(k_v^*, G\right)} \frac{1}{\left|\Phi\left(\chi_v\right)^{\Real(s)}\right|}.
    \end{align*}
    Recall, after choosing a uniformizer, we get
\begin{equation}\label{kvOvses}
\operatorname{Hom}\left(k_v^*, G\right) \cong \operatorname{Hom}\left(k_v^* / \mathcal{O}_v^*, G\right) \oplus \operatorname{Hom}\left(\mathcal{O}_v^*, G\right) . 
\end{equation}
Hence
\begin{align*}
|\widehat{f}_v\left(x_v ; s\right)| \leq \frac{1}{|G|} \sum_{\varphi_v \in \operatorname{Hom}\left(k_v^*/\mathcal{O}_v^*, G\right)} \sum_{\psi_v \in \operatorname{Hom}\left(\mathcal{O}_v^*, G\right)} \frac{1}{\left|\Phi\left(\varphi_v\psi_v\right)^{Re(s)}\right|}.
\end{align*}
Furthermore, filtering these characters by their conductor yields
   \begin{align}\label{lftb}
        |\widehat{f}_v\left(x_v ; s\right)|
        \leq \sum_{j = 0}^{\infty} \sum_{\substack{ \psi_v \in \operatorname{Hom}\left(\mathcal{O}_v^*, G\right) \\ \Phi(\psi_v) = q^j}} \frac{1}{q_v^{jRe(s)}},
   \end{align}
as $|\Hom(\kk/\OO,G)|= |G|$. We observe that
\begin{align*}
|\left\{ \psi_v \in \Hom(\mathcal{O}_v^*, G): \Phi(\psi_v) = q_v^{j} \right\}|&=|\left\{ \psi_v \in \Hom(\mathcal{O}_v^*, G): \Phi(\psi_v)\leq q_v^j \right\}|
\\ & -|\left\{ \psi_v \in \Hom(\mathcal{O}_v^*, G): \Phi(\psi_v)\leq q_v^{j-1} \right\}|.
\end{align*} 
The result follows after identifying 
$\left\{ \psi_v \in \Hom(\mathcal{O}_v^*, G): \Phi(\psi_v)\leq q_v^j \right\}$ with $\Hom(\OO/\OOj,G)$.
\end{proof}
   
Therefore, to prove the absolute convergence of $\widehat{f}_v\left(x_v ; s\right)$, we count $|\Hom(\OO/\OOj,G)|$.

\begin{proposition}\label{MainCount}
Recall $G_w = \prod_{m=1}^e C_{p^m}^{r_m}$, we have
\begin{equation}\label{MainCountEqu}
|\Hom(\mathcal{O}_v^*/\OOj, G)| = q_v^{\sum^e_{m=1}r_m(G)\left(j-\left\lceil\frac{j}{p^m} \right\rceil\right)}\times|\Hom(\F_{q_v}^*, G_t)| .
\end{equation}
\end{proposition}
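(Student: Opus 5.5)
The count factors along the decomposition $\OO\cong \F_v^*\times\OOone$. Restricting to $\OO/\OOj$ gives a splitting $\OO/\OOj\cong \F_v^*\times \OOone/\OOj$ for $j\geq 1$. Here $\F_v^*$ is cyclic of order prime to $p$, and $\OOone/\OOj$ is a finite $p$-group. Since $G=G_w\times G_t$ with $G_w$ the $p$-part, homomorphisms from a group of order prime to $p$ land in $G_t$, and homomorphisms from a $p$-group land in $G_w$. Hence
\[
\Hom(\OO/\OOj,G)\cong \Hom(\F_v^*,G_t)\times \Hom(\OOone/\OOj,G_w).
\]
This yields the factor $|\Hom(\F_{q_v}^*,G_t)|$. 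The case $j=0$ needs a separate check: the quotient is trivial, and $j-\lceil 0\rceil=0$, so the formula still holds. It remains to show
\[
|\Hom(\OOone/\OOj,G_w)|=q_v^{\sum_m r_m(j-\lceil j/p^m\rceil)}.
\]

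For this I use Proposition~\ref{quotientofhigherunitgroup}. It gives $\OOone/\OOj\cong\prod_{t\in\mathbb{N}_p^*}(\Z/p^{\lceil j/t\rceil_p}\Z)^{B_v}$, where $|B_v|=f_v:=[\F_v:\F_p]$, so $q_v=p^{f_v}$. Note that $\lceil j/t\rceil_p=0$ for $t\geq j$, so the product is finite. Homomorphisms out of a product of cyclic groups factor over the factors. Writing $a(t)=\lceil j/t\rceil_p$,
\[
|\Hom(\Z/p^{a}\Z, C_{p^m})|=p^{\min(a,m)},
\]
and therefore
\[
|\Hom(\OOone/\OOj,G_w)|=\prod_{m}\Bigl(\prod_{t\in\mathbb{N}_p^*}p^{\min(a(t),m)}\Bigr)^{f_v r_m}.
\]
So it suffices to prove the combinatorial identity
\[
\sum_{t\in\mathbb{N}_p^*,\,t<j}\min\bigl(\lceil j/t\rceil_p,m\bigr)=j-\lceil j/p^m\rceil\qquad(j\ge1,\ m\ge1).
\]

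This identity is the main step. I would prove it by rewriting $\min(a(t),m)=\sum_{i=1}^{m}\mathbf 1[a(t)\ge i]$. The condition $a(t)\geq i$ means $j/t>p^{i-1}$, i.e. $t p^{i-1}<j$. Thus the left side counts pairs $(t,i)$ with $t$ prime to $p$, $1\le i\le m$, and $tp^{i-1}<j$. The map $(t,i)\mapsto n=tp^{i-1}$ is a bijection onto the integers $1\le n<j$ with $v_p(n)\le m-1$. The number of such $n$ equals $(j-1)$ minus the number of multiples of $p^m$ in $[1,j-1]$, which is $(j-1)-\lfloor (j-1)/p^m\rfloor$. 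Finally, $\lfloor (j-1)/p^m\rfloor=\lceil j/p^m\rceil-1$, which gives $j-\lceil j/p^m\rceil$.

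The obstacles are mostly bookkeeping. The conventions for $\lceil\cdot\rceil_p$ at the boundary ($u\le p^l$ versus strict inequality) must match the exact statement of Proposition~\ref{quotientofhigherunitgroup}. If that proposition's convention gives $a(t)\ge i\iff tp^{i-1}<j$, the count is as above; with a differing convention I would recheck small cases such as $j=2$, $p=2$ to fix the off-by-one. Summing the exponents over $m$ with multiplicity $r_m f_v$ then gives $p^{f_v\sum_m r_m(j-\lceil j/p^m\rceil)}=q_v^{\sum_m r_m(j-\lceil j/p^m\rceil)}$, which completes the proof.
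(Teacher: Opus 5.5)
Your proof is correct and takes essentially the paper's route: split $\OO/\OOj\cong\F_{q_v}^*\times\OOone/\OOj$, use coprimality to reduce to $\Hom(\F_{q_v}^*,G_t)\times\Hom(\OOone/\OOj,G_w)$, apply Proposition~\ref{quotientofhigherunitgroup} with $|\Hom(\Z/p^a\Z,C_{p^m})|=p^{\min(a,m)}$, and rewrite $\sum_t\min(a(t),m)$ as $\sum_{i=1}^m\#\{t: a(t)\ge i\}$. The only difference is the final count: you use the bijection $(t,i)\mapsto tp^{i-1}$ onto $\{1\le n<j : v_p(n)\le m-1\}$, where the paper computes $A(\gamma)=\lceil j/p^{\gamma-1}\rceil-\lceil j/p^{\gamma}\rceil$ and telescopes, and your version is the tidier of the two.

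One slip: your aside that the formula ``still holds'' at $j=0$ is false. There the left side is $1$, while the right side is $|\Hom(\F_{q_v}^*,G_t)|$, because you dropped the tame factor. The statement is meant, and true, only for $j\ge 1$, which is all your argument needs; note that the $j=1$ term in Lemma~\ref{absconvpre} uses the value $1$ at $j=0$, not the formula.
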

\begin{proof}
    Recall the filtration from (\ref{HigherUG}). By Proposition \ref{quotientofhigherunitgroup}, we have $\OO/\OOone \cong \F_{q_v}^* $ and for $j \geq 1 $, 
 $\OOj/\OOjj \cong \pi_v^j \OOO/\pi_v^{j+1} \OOO\cong \F_{q_v}$.  The first isomorphism induces the following short exact sequence
\begin{equation}\label{lses}
        1 \to \OOone \to \OO \to \F_{q_v}^* \to 1,
\end{equation}
this sequence splits by applying Hensel's lemma to $ f(x) = x^{q_v -1} -1$ since $\F_{q_v}^*$ are the simple roots of $f$ mod $v$. After taking $\Hom(-,G)$, we have
    $$
    1 \to \Hom(\OOone,G) \to \Hom(\OO,G) \to \Hom(\F_q^* ,G) \to 1.
    $$
Together with the decomposition $\mathcal{O}_v^*/\OOj \cong \F_{q_v}^* \times \OOone/\OOj$, we deduce that
\begin{align*}\label{mixdec}
|\operatorname{Hom}\left(\mathcal{O}_v^*/\OOj, G\right)| &= |\Hom (\F_{q_v}^*, G_w)|\times |\Hom (\OOone/\OOj, G_w)| \\ &\times |\Hom (\F_{q_v}^*, G_t)| \times |\Hom (\OOone/\OOj, G_t)|. 
\end{align*}
However,
\begin{align*}
|\operatorname{Hom}\left(\mathcal{O}_v^*/\OOj, G\right)|  =  |\Hom(\OOone/\OOj, G_w)| \times |\Hom (\F_{q_v}^*, G_t)| ,
\end{align*}
as $|\F_{q_v}^*| = q^{\operatorname{deg}(v)}-1$ and  $|\Hom (\OOone/\OOj, G_t)| =1$.

Using Proposition \ref{quotientofhigherunitgroup}, we can compute $|\Hom(\OOone/\OOj, G_w)|$ explicitly. 
Indeed,
\begin{align*}
    & |\Hom(\OOone/\OOj, G_w)| = |\prod^e_{m=1}\Hom(\OOone/\OOj, C_{p^m}^{r_m(G)})| = \prod^e_{m=1}|\Hom(\OOone/\OOj, C_{p^m})|^{r_m(G)}.
\end{align*}
Furthermore, we have
\begin{align*}
    |\Hom(\OOone/\OOj, C_{p^m})| = \prod_{t\in \NN^*_p}|\Hom(\ZZ/\ZZ{p^{\left\lceil\frac{j}{t}\right\rceil_p}}, C_{p^m})|^{B} = \prod_{\substack{t =1 \\ t\in \NN^*_p \\\left\lceil \log_p \frac{j}{t}\right\rceil_p \geq m}}^{j}q_v^m\prod_{\gamma = 1}^{m-1} \prod_{\substack{t = 1 \\ t \in \NN_p^* \\\left\lceil \log_p \frac{j}{t}\right\rceil_p = \gamma }}^{j} q_v^{\gamma}, 
\end{align*}
as
\begin{equation}
    |\Hom(\ZZ/\ZZ{p^{\left\lceil\frac{j}{t}\right\rceil_p}}, C_{p^m})| = \begin{cases} 
        p^{\left\lceil\frac{j}{t}\right\rceil_p},& {\left\lceil\frac{j}{t}\right\rceil_p <m,} \\
p^{m},& {\left\lceil\frac{j}{t}\right\rceil_p  \geq m.}
         \end{cases}
\end{equation}
Now, since  $ A(m):=|\{t: \left\lceil \log_p \frac{j}{t} \right\rceil\geq m\}| = \left\lceil \frac{j}{p^{m-1}} -1 \right\rceil - \left\lceil \frac{j}{p^{m}} -1 \right\rceil, $ we have 
\begin{align*}
    \prod_{\substack{t =1 \\ t\in \NN^*_p \\\left\lceil \log_p \frac{j}{t}\right\rceil_p \geq m}}^{j}q_v^m\prod_{\gamma = 1}^{m-1} \prod_{\substack{t = 1 \\ t \in \NN_p^* \\\left\lceil \log_p \frac{j}{t}\right\rceil_p = \gamma }}^{j} q_v^{\gamma} &= q_v^{mA(m) + \sum_{\gamma=1}^{m-1}\gamma (A(\gamma)-A(\gamma +1))} \\&= q_v^{ \sum_{\gamma=1}^{m}A(\gamma)} = q_v^{j-1 -\left\lceil\frac{j}{p^{m}}-1\right\rceil}= q_v^{j -\left\lceil\frac{j}{p^{m}}\right\rceil},
\end{align*}
where  the last equality is due to $\sum_{\gamma=1}^{m}A(\gamma)$ is a telescoping series and $ \sum_{\gamma=1}^{m-1}\gamma (A(\gamma)-A(\gamma +1))$ is a variant of a telescoping series.
Therefore, the result follows.
\end{proof}

Now we are in the position to show the absolute convergence of local Fourier transforms.

\begin{proposition}\label{absconvergence:delta} Let $\delta > 1$, $ v\in \Omega_k$   and $x_v\in k_v^* \otimes G^{\wedge}$. Then $\widehat{f}_v\left(x_v ; s\right)$ converges absolutely for $\operatorname{Re}(s) \geq a(G)- \frac{1}{\delta p^e}$, where $e$ is the exponent of $G_w$.
\end{proposition}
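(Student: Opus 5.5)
We combine the bound of Lemma~\ref{absconvpre} with the count of Proposition~\ref{MainCount}. Set $\sigma=\Real(s)$ and $c_j:=|\Hom(\OO/\OOj,G)|$. By Lemma~\ref{absconvpre} it suffices to show
\[
\sum_{j\geq 2}\frac{c_j-c_{j-1}}{q_v^{j\sigma}}<\infty \qquad \text{for } \sigma\geq a(G)-\tfrac{1}{\delta p^e};
\]
the first two terms of the bound are finite. Since $c_j\geq c_{j-1}$ (inclusion $\Hom(\OO/\OOjs,G)\subseteq\Hom(\OO/\OOj,G)$), every summand is non-negative. We may therefore bound $c_j-c_{j-1}\leq c_j$ and use Proposition~\ref{MainCount}:
\[
c_j = |\Hom(\F_{q_v}^*,G_t)|\; q_v^{E(j)},\qquad E(j):=\sum_{m=1}^e r_m\Bigl(j-\Bigl\lceil \tfrac{j}{p^m}\Bigr\rceil\Bigr).
\]
If $G_w$ is trivial, then $E(j)=0$ and the sum is a geometric series in $q_v^{-\sigma}$. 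This converges since $\sigma\geq a(G)-\tfrac1\delta=1-\tfrac1\delta>0$ when $e=0$.

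The key step is a linear upper bound for $E(j)$. Using $\lceil j/p^m\rceil\geq j/p^m$, we get
\[
E(j)\leq j\sum_{m=1}^e r_m\bigl(1-p^{-m}\bigr)=j\Bigl(a(G)-p^{-e}\Bigr),
\]
because $a(G)=p^{-e}+\sum_m r_m(1-p^{-m})$ by its definition.

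The $j$-th term is then at most $|\Hom(\F_{q_v}^*,G_t)|\,q_v^{\,j(a(G)-p^{-e}-\sigma)}$. For $\sigma\geq a(G)-\tfrac{1}{\delta p^e}$, the exponent satisfies
\[
a(G)-p^{-e}-\sigma\leq -p^{-e}\Bigl(1-\tfrac1\delta\Bigr)<0,
\]
since $\delta>1$. The series is therefore dominated by a convergent geometric series with ratio $q_v^{-p^{-e}(1-1/\delta)}<1$. This gives absolute convergence, uniformly in $x_v$ and on the half-plane.

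I do not expect a genuine obstacle; the only point needing care is identifying $a(G)-p^{-e}$ as the growth rate of $E(j)$. The constant $p^{-e}$ in $a(G)$ is what leaves room below the threshold, and this is exactly why the statement allows the margin $\tfrac{1}{\delta p^e}$ with $\delta>1$ but not $\delta=1$. If a sharper statement were needed (for example, the singular behaviour at $\sigma=a(G)$), one would instead keep $c_j-c_{j-1}$ and use the exact periodicity of $\lceil j/p^m\rceil$ modulo $p^e$. For mere convergence, the crude bound $c_j-c_{j-1}\le c_j$ suffices.
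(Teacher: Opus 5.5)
Your proof is correct and is essentially the paper's argument. The paper also combines Lemma~\ref{absconvpre} with Proposition~\ref{MainCount} and bounds the $j$-th term by $|\Hom(\F_{q_v}^*,G_t)|\,q_v^{E(j)-j\Real(s)}$ with $E(j)-ja(G)\leq -jp^{-e}$: its exponent $\sum_m r_m\bigl(j-1-\lfloor (j-1)/p^m\rfloor\bigr)$ is exactly your $E(j)$, and its case split on whether $p^m\mid j-1$ amounts to your $\lceil j/p^m\rceil\geq j/p^m$. The only difference is that the paper first computes $c_j-c_{j-1}$ exactly, a formula it reuses for Lemma~\ref{LocalFT}, while your cruder bound $c_j-c_{j-1}\leq c_j$ is all that convergence needs.
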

\begin{proof}
By Lemma \ref{absconvpre}, we have
    \begin{align*}
    |\widehat{f}_v\left(x_v ; s\right)|
     \leq  1 &+ \frac{|\Hom (\F_{q_v}^*, G_t)| - 1 }{q_v^{\operatorname{Re}(s)}} 
    \\ &+ \sum_{i = 2 }^{\infty} \frac{ (|\OOone/\OOj, G_w)| 
 - |\OOone/\OOjs, G_w)|)\times |\Hom (\F_{q_v}^*, G_t)| }{q_v^{j\operatorname{Re}(s)}}.
\end{align*}
Using Proposition \ref{MainCount}, we conclude that
\begin{align*}
& |(\OOone/\OOj, G_w)| 
 - |(\OOone/\OOjs, G_w)|
\\&= q_v^{\sum^e_{m=1}r_m(G)\left(j-1 -\left\lceil\frac{j-1}{p^m} \right\rceil\right)}\left(q_v^{\sum^e_{m=1}r_m(G)\left(j-\left\lceil\frac{j}{p^m} \right\rceil \right)
 - \sum^e_{m=1}r_m(G)\left(j-1 -\left\lceil\frac{j-1}{p^m} \right\rceil\right)}-1\right)
 \\ & = q_v^{\sum^e_{m=1}r_m(G)\left(j-1 -\left\lceil\frac{j-1}{p^m} \right\rceil\right)}
\left(q_v^{\sum_{m=1}^er_m(G)\left(1-\left\lceil \frac{j}{p^m}-1\right\rceil +\left\lceil \frac{j-1}{p^m}-1\right\rceil\right)}-1\right)
\\ & = q_v^{\sum^e_{m=1}r_m(G)\left(j-1 -\left\lceil\frac{j-1}{p^m} \right\rceil\right)}
\left(q_v^{\sum^e_{m=1+v_p(j-1)}r_m(G)}-1\right).
\end{align*}
Hence,
\begin{align*}
|\widehat{f}_{ v}\left(x_v ; s\right)|\leq 1 &+ \frac{|\Hom (\F_{q_v}^*, G_t)| - 1 }{q_v^{s}}  \\& + |\Hom (\F_{q_v}^*, G_t)|   \times \sum^{\infty}_{\substack{j=2\\v_p(j-1)\leq e-1}} \frac{\left(q_v^{\sum^e_{m=1+v_p(j-1)}r_m(G)}-1\right)\left(q_v^{\sum_{m=1}^er_m(G)\left(j-1-\left\lceil\frac{j-1}{p^m}\right\rceil\right)}\right)}{q_v^{js}}.
\end{align*}
Note that we may simplify the above expression by observing that 
\begin{align*}
 \sum_{m=1}^er_m(G)\left(j-1-\left\lceil\frac{j-1}{p^m}\right\rceil\right)+\sum_{l=v_p(j-1)+1}^{e}r_m(G) = \sum_{m=1}^er_m(G)\left(j-1-\left\lfloor\frac{j-1}{p^m}\right\rfloor\right),
\end{align*}
as $\left\lceil x \right\rceil - \left\lfloor  x \right\rfloor = 0$ when $x \in \Z$, and $\left\lceil x\right\rceil - \left\lfloor x \right\rfloor = 1$ when $x \notin \Z$. Therefore,
\begin{align}
     |\widehat{f}_{ v}\left(x_v ; s\right)| = 1&+ \frac{|\Hom (\F_{q_v}^*, G_t)| - 1 }{q_v^{\operatorname{Re}(s)}} \label{computelocalFourier} \\ &+  |\Hom (\F_{q_v}^*, G_t)|  \times\sum^{\infty}_{\substack{j=2\\v_p(j-1)\leq e-1}} \frac{q_v^{\sum_{m=1}^er_m(G)\left(j-1-\left\lfloor\frac{j-1}{p^m}\right\rfloor\right)}-q_v^{\sum_{m=1}^er_m(G)\left(j-1-\left\lceil\frac{j-1}{p^m}\right\rceil\right)}}{q_v^{j\operatorname{Re}(s)}}. \nonumber
\end{align}
If $\Real (s) \geq a(G) - \frac{1}{\delta p^e}$, then the largest $q_v$-exponent from each term of (\ref{computelocalFourier}) is 
\begin{align*}
-j\Real (s) + {\sum_{m=1}^er_m(G)\left(j-1-\left\lfloor\frac{j-1}{p^m}\right\rfloor\right)} &\leq -ja(G) + \frac{j}{\delta p^e}+{\sum_{m=1}^er_m(G)\left(j-1-\left\lfloor\frac{j-1}{p^m}\right\rfloor\right)} \\ &=
-\frac{j}{p^e} + \frac{j}{\delta p^e}+{\sum_{m=1}^er_m(G)\left(\frac{j}{p^m}-1-\left\lfloor\frac{j-1}{p^m}\right\rfloor\right)}.
\end{align*}
If $p^m \nmid j-1,$ then $\frac{j}{p^m}-1-\left\lfloor\frac{j-1}{p^m}\right\rfloor = \frac{j}{p^m}-\left\lceil\frac{j-1}{p^m}\right\rceil = \frac{j}{p^m}-\left\lceil\frac{j}{p^m}\right\rceil \leq 0.$ Otherwise, if $p^m \mid j-1,$ we have that $\frac{j}{p^m}-1-\left\lfloor\frac{j-1}{p^m}\right\rfloor = \frac{j}{p^m}-1-\frac{j-1}{p^m}<0$. 
Since $-\frac{j}{p^e} + \frac{j}{\delta p^e} <0$ the convergence of (\ref{computelocalFourier}) follows.
\end{proof}

We next compute local Fourier transforms for arbitrary $x_v\in k_v^* \otimes G^{\wedge}$. 
\begin{lemma}\label{computelocalinOv}
Recall that $S  \subseteq \Omega_k$ is a finite set of places. Let $v \notin S$ and let $x_v \in k_v^* \otimes G^{\wedge}$. Then
$$
%no need to add S as constant function?
\widehat{f}_v\left(x_v ; s\right)= \begin{cases}\sum_{\chi_v \in \operatorname{Hom}\left(\mathcal{O}_v^*, G\right)} \frac{f_{W_v}\left(\chi_v\right)\left\langle\chi_v, x_v\right\rangle}{\Phi_v\left(\chi_v\right)^s}, & \text { if } x_v \in \mathcal{O}_v^* \otimes G^{\wedge} \\ 0, & \text { otherwise.}\end{cases}
$$
\end{lemma}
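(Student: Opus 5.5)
The plan is to use the splitting \eqref{kvOvses}, $\Hom(k_v^*,G)\cong\Hom(k_v^*/\mathcal{O}_v^*,G)\oplus\Hom(\mathcal{O}_v^*,G)$, obtained by fixing a uniformizer $\pi_v$. Under this splitting, write $\chi_v=\varphi_v\psi_v$, where $\varphi_v$ is unramified (trivial on $\mathcal{O}_v^*$) and $\psi_v$ is the component supported on $\mathcal{O}_v^*$. Concretely, $\psi_v$ is extended by $\psi_v(\pi_v)=1$. For $v\notin S$ we have $f_{W_v}\equiv 1$, and the conductor $\Phi_v(\chi_v)$ depends only on the restriction $\chi_v|_{\mathcal{O}_v^*}=\psi_v$. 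Hence the summand in \eqref{localFourier} depends on $\varphi_v$ only through the pairing. Since the pairing is bilinear in the character, $\langle\varphi_v\psi_v,x_v\rangle=\langle\varphi_v,x_v\rangle\langle\psi_v,x_v\rangle$. This gives the factorisation
\[
\widehat{f}_v(x_v;s)=\frac{1}{|G|}\Bigl(\sum_{\varphi_v\in\Hom(k_v^*/\mathcal{O}_v^*,G)}\langle\varphi_v,x_v\rangle\Bigr)\Bigl(\sum_{\psi_v\in\Hom(\mathcal{O}_v^*,G)}\frac{\langle\psi_v,x_v\rangle}{\Phi_v(\psi_v)^s}\Bigr).
\]
The rearrangement is legitimate because Proposition~\ref{absconvergence:delta} gives absolute convergence in the relevant half-plane.

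The first factor is a character sum over the finite group $\Hom(k_v^*/\mathcal{O}_v^*,G)\cong G$. Its value is $|G|$ if $x_v$ pairs trivially with every unramified character, and $0$ otherwise. It remains to identify the annihilator of $\Hom(k_v^*/\mathcal{O}_v^*,G)$ inside $k_v^*\otimes G^\wedge$ with $\mathcal{O}_v^*\otimes G^\wedge$. Using $k_v^*\cong\langle\pi_v\rangle\times\mathcal{O}_v^*$, we get $k_v^*\otimes G^\wedge\cong(\ZZ\otimes G^\wedge)\oplus(\mathcal{O}_v^*\otimes G^\wedge)$. The pairing respects this decomposition: unramified characters pair only with the $\pi_v\otimes G^\wedge\cong G^\wedge$ component, and there the pairing is the perfect evaluation pairing $G\times G^\wedge\to S^1$. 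Therefore $x_v$ is annihilated by all unramified characters exactly when its $\pi_v$-component vanishes, that is, when $x_v\in\mathcal{O}_v^*\otimes G^\wedge$. In that case the factor $|G|$ cancels the normalisation $1/|G|$, leaving exactly the sum over $\Hom(\mathcal{O}_v^*,G)$ in the statement. This is also where $f_{W_v}(\chi_v)$ re-enters, trivially, since it equals $1$.

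The main subtle step is checking that $\mathcal{O}_v^*\otimes G^\wedge\to k_v^*\otimes G^\wedge$ is injective, so that the statement makes sense, and that the identification of duals is compatible with the splitting. Injectivity holds because the splitting makes $\mathcal{O}_v^*$ a direct summand of $k_v^*$, and tensoring preserves direct sums. Compatibility should follow from the description of the pairing in \cite[\S3.1]{MR3884640}, which is defined componentwise via evaluation. The argument is independent of the choice of $\pi_v$, since both sides of the conclusion are canonical.
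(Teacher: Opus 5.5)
Your proposal is correct and follows the paper's proof. Like the paper, you split $\Hom(k_v^*,G)$ using a uniformizer, and you use $f_{W_v}\equiv 1$ together with the fact that $\Phi_v$ depends only on the restriction to $\mathcal{O}_v^*$ to pull out the finite sum over unramified characters, which you then evaluate by orthogonality.

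Your identification of $\mathcal{O}_v^*\otimes G^\wedge$ as the annihilator of $\Hom(k_v^*/\mathcal{O}_v^*,G)$, via $k_v^*\otimes G^\wedge\cong G^\wedge\oplus(\mathcal{O}_v^*\otimes G^\wedge)$, is more precise than the paper's wording. The paper instead describes this subgroup as the Pontryagin dual of $\Hom(k_v^*/\mathcal{O}_v^*,G)$.
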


\begin{proof}
From (\ref{kvOvses}),we have
$$
\widehat{f}_v\left(x_v ; s\right)=\frac{1}{|G|} \sum_{\psi_v \in \operatorname{Hom}\left(k_v^* / \mathcal{O}_v^*, G\right)} \sum_{\chi_v \in \operatorname{Hom}\left(\mathcal{O}_v^*, G\right)} \frac{f_{W_v}\left(\psi_v \chi_v\right)\left\langle\psi_v \chi_v, x_v\right\rangle}{\Phi_v\left(\chi_v\right)^s}.
$$
Note that outside of $S$, the function $f_{W_v}$ only takes the value $1$. Therefore
$$
\widehat{f}_{W_v, G}\left(x_v ; s\right)=\frac{1}{|G|} \sum_{\chi_v \in \operatorname{Hom}\left(\mathcal{O}_v^*, G\right)} \frac{f_{W_v}(\varphi_v)\left\langle\chi_v, x_v\right\rangle}{\Phi_v\left(\chi_v\right)^s} \sum_{\psi_v \in \operatorname{Hom}\left(k_v^* / \mathcal{O}_v^*, G\right)}\left\langle\psi_v, x_v\right\rangle .
$$
Now, character orthogonality gives
$$
\sum_{\psi_v \in \operatorname{Hom}\left(k_v^* / \mathcal{O}_v^*, G\right)}\left\langle\psi_v, x_v\right\rangle= \begin{cases}\left|\operatorname{Hom}\left(k_v^* / \mathcal{O}_v^*, G\right)\right|, & \text { if } x_v \in \mathcal{O}_v^* \otimes G^{\wedge}, \\ 0, & \text { otherwise. }\end{cases}
$$
Indeed, the subgroup $\mathcal{O}_v^* \otimes G^{\wedge} \subseteq k_v^* \otimes G^{\wedge}$ is naturally identified with the Pontryagin dual of $\operatorname{Hom}\left(k_v^* / \mathcal{O}_v^*, G\right)$. The result now follows as $\left|\operatorname{Hom}\left(k_v^* / \mathcal{O}_v^*, G\right)\right| = \left|\operatorname{Hom}\left(\Z, G\right)\right|=|G|$.
\end{proof}
Now we state the local Fourier transform for all cases. Recall by Lemma~\ref{trivialintersection}, we have that $\bigcap_{j\geq 1}\mathcal{O}^*_{v,j} \otimes G^{\wedge} = \{1\}.$ 
\begin{lemma}\label{LocalFT}
Let $v \notin S$ and let $x_v \in \OO \otimes G^{\wedge}$. 

If $G_w$ is non-trivial,
we have the following results for $\widehat{f}_{ v}\left(x_v ; s\right).$
\begin{enumerate}
\item For $x_v \notin\mathcal{O}_{v,1}^*\otimes G^{\wedge} $,
\begin{align*}
\widehat{f}_{ v}\left(x_v ; s\right)=1+ \frac{ - 1}{q_v^{s}}.
\end{align*}
    \item 
    For $x_v \in \mathcal{O}_{v,1}^*\otimes G^{\wedge} \text{ and }  x_v \notin \mathcal{O}_{v,2}^*\otimes G^{\wedge},$

\begin{align*}
\widehat{f}_{ v}\left(x_v ; s\right)=1+ \frac{|\Hom (\F_{q_v}^*, G_t)| - 1 }{q_v^{s}} - \frac{|\Hom (\F_{q_v}^*, G_t)|}{q_v^{2s}}.
\end{align*}
    \item Let $h\geq 2$: 
For $x_v \in \mathcal{O}_{v,h}^*\otimes G^{\wedge} \text{ and }  x_v \notin \mathcal{O}_{v,h+1}^*\otimes G^{\wedge}$,

\begin{align}
\widehat{f}_{ v}\left(x_v ; s\right)=1 & + \frac{|\Hom (\F_{q_v}^*, G_t)| - 1 }{q_v^{s}} \label{computelocalFourierfordifferentchar} \\&+ |\Hom (\F_{q_v}^*, G_t)|  \times\sum^{h}_{\substack{j=2\\v_p(j-1)\leq e-1}} \frac{q_v^{\sum_{m=1}^er_m(G)\left(j-1-\left\lfloor\frac{j-1}{p^m}\right\rfloor\right)}-q_v^{\sum_{m=1}^er_m(G)\left(j-1-\left\lceil\frac{j-1}{p^m}\right\rceil\right)}}{q_v^{js}}
\nonumber \\ &- |\Hom (\F_{q_v}^*, G_t)| \times\frac{q_v^{\sum^e_{m=1}r_m(G)\left(h-\left\lceil\frac{h}{p^m} \right\rceil\right)}}{q_v^{(h+1)s}}. \nonumber
\end{align}
\item 
If $x_v = \mathbf{1}_v$ is trivial, we have 
\begin{align}
    \widehat{f}_{ v}\left(x_v ; s\right) = 1&+ \frac{|\Hom (\F_{q_v}^*, G_t)| - 1 }{q_v^{s}} \label{trivialcharLocalFourierTransform} \\ &+  |\Hom (\F_{q_v}^*, G_t)|  \times\sum^{\infty}_{\substack{j=2\\v_p(j-1)\leq e-1}} \frac{q_v^{\sum_{m=1}^er_m(G)\left(j-1-\left\lfloor\frac{j-1}{p^m}\right\rfloor\right)}-q_v^{\sum_{m=1}^er_m(G)\left(j-1-\left\lceil\frac{j-1}{p^m}\right\rceil\right)}}{q_v^{js}}. \nonumber
\end{align}
\end{enumerate}
If $G_w$ is trivial, according to Lemma \ref{trivialintersection},  $\mathcal{O}_{v,j}^*\otimes G^{\wedge} $ is trivial for any $j$. In this case, we have 
\begin{equation}\label{purelytameFT}
\widehat{f}_v\left(x_v ; s\right)= \begin{cases}1+ \frac{|\Hom (\F_{q_v}^*, G_t)| - 1 }{q_v^{s}} , & \text { if } x_v \in \mathcal{O}_v^* \otimes G^{\wedge} \text{ is trivial},
\\ 1- \frac{1}{q_v^{s}} , & \text { otherwise.}\end{cases}
\end{equation}
\end{lemma}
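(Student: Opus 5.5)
We start from Lemma~\ref{computelocalinOv}: since $v\notin S$ and $x_v\in\OO\otimes G^{\wedge}$,
\[
\widehat f_v(x_v;s)=\sum_{\chi_v\in\Hom(\OO,G)}\frac{\langle\chi_v,x_v\rangle}{\Phi_v(\chi_v)^s}.
\]
We group the characters by conductor exponent $j$, i.e.\ by the smallest $j$ with $\chi_v$ trivial on $\OOj$ (with $\mathcal O_{v,0}^*:=\OO$). This gives
\[
\widehat f_v(x_v;s)=\sum_{j\ge 0}q_v^{-js}\Bigl(\Sigma_j(x_v)-\Sigma_{j-1}(x_v)\Bigr),\qquad \Sigma_j(x_v):=\sum_{\chi_v\in\Hom(\OO/\OOj,G)}\langle\chi_v,x_v\rangle,
\]
with $\Sigma_{-1}:=0$. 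The sum converges absolutely by Proposition~\ref{absconvergence:delta}, so the rearrangement is legitimate.

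The key step is character orthogonality on the finite group $\Hom(\OO/\OOj,G)$. Its Pontryagin dual is $(\OO/\OOj)\otimes G^{\wedge}$. Hence $\Sigma_j(x_v)=|\Hom(\OO/\OOj,G)|$ if the image of $x_v$ in $(\OO/\OOj)\otimes G^{\wedge}$ is trivial, and $\Sigma_j(x_v)=0$ otherwise. By right exactness of $\otimes$, the kernel of $\OO\otimes G^{\wedge}\to(\OO/\OOj)\otimes G^{\wedge}$ is the image of $\OOj\otimes G^{\wedge}$. So $\Sigma_j(x_v)$ is the full count exactly when $x_v\in\OOj\otimes G^{\wedge}$, and these images form a decreasing chain in $j$.

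Now let $h$ be the largest $j$ with $x_v\in\OOj\otimes G^{\wedge}$. This is finite unless $x_v$ is trivial, by Lemma~\ref{trivialintersection}; the $j=0$ condition holds automatically. Then $\Sigma_j=|\Hom(\OO/\OOj,G)|$ for $j\le h$ and $\Sigma_j=0$ for $j>h$. The series therefore telescopes to
\[
\widehat f_v(x_v;s)=\sum_{j=0}^{h}\frac{|\Hom(\OO/\OOj,G)|-|\Hom(\OO/\OOjs,G)|}{q_v^{js}}-\frac{|\Hom(\OO/\mathcal O^*_{v,h},G)|}{q_v^{(h+1)s}}.
\]
For $x_v$ trivial, $h=\infty$ and there is no final negative term. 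We then insert Proposition~\ref{MainCount}:
\begin{itemize}
\item $j=0$ gives the count $1$;
\item $j=1$ gives $|\Hom(\F_{q_v}^*,G_t)|$;
\item for $j\ge2$, the differences are exactly the expressions computed in the proof of Proposition~\ref{absconvergence:delta}, namely the floor/ceiling terms with the restriction $v_p(j-1)\le e-1$.
\end{itemize}
The final term with exponent $h-\lceil h/p^m\rceil$ also comes from Proposition~\ref{MainCount}.

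Checking the cases:
\begin{itemize}
\item $h=0$ gives $1-q_v^{-s}$, which is case (1).
\item $h=1$ gives $1+(|\Hom(\F_{q_v}^*,G_t)|-1)q_v^{-s}-|\Hom(\F_{q_v}^*,G_t)|q_v^{-2s}$, which is case (2).
\item $h\ge2$ gives case (3), and $h=\infty$ gives case (4).
\end{itemize}

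In the tame case, $\OOone$ is pro-$p$ and $G^{\wedge}$ has order prime to $p$, so $\mathcal O^*_{v,j}\otimes G^{\wedge}=0$ for $j\ge1$ (compare Lemma~\ref{trivialintersection}). Hence only $h\in\{0,\infty\}$ occurs, giving \eqref{purelytameFT}. When $h=\infty$ the $j\ge2$ counts all equal the $j=1$ count, so those differences vanish.

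The main point to handle carefully is the identification of the annihilator. We need that the dual of $\Hom(\OO/\OOj,G)$, viewed inside $\OO\otimes G^{\wedge}$, corresponds precisely to the quotient by $\OOj\otimes G^{\wedge}$. This requires right-exactness of the tensor product and the perfect pairing $\Hom(A,G)\times(A\otimes G^{\wedge})\to S^1$ for finite $A$, compatibly with the pairing on $\OO$. The telescoping itself and the case bookkeeping are routine.
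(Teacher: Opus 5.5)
Your proposal is correct and takes essentially the same route as the paper. Both arguments use Lemma~\ref{computelocalinOv} to reduce to a sum over $\Hom(\OO,G)$ grouped by conductor, write each conductor layer as a difference of full character sums over $\Hom(\OO/\OOj,G)$, use orthogonality to detect whether $x_v$ lies in the image of $\OOj\otimes G^{\wedge}$, and substitute the counts from Proposition~\ref{MainCount}; your uniform telescoping with $\mathcal{O}_{v,0}^*=\OO$ and your direct derivation of the tame case, where the paper instead cites \cite{Frei_2022}, are only presentational refinements.
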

\begin{proof}
 The proof for the purely tame case follows immediately from \cite[Lem.~3.10] {Frei_2022} by  taking $\mathcal{A} = \{1\}$ in their setting. Now we assume that $G_w$ is non-trivial.  
Lemma \ref{computelocalinOv} shows that for $v \notin S$,
\begin{equation}\label{localFouriersimplified}
\widehat{f}_{ v}\left(x_v ; s\right)= \begin{cases}\left\langle1, x_v\right\rangle +  \sum_{j=1}^\infty \frac{\sum_{\substack{\psi_v \in \operatorname{Hom}\left(\mathcal{O}_v^*, G\right)\\ \Phi(\psi)= q_v^j}} \left\langle\psi_v, x_v\right\rangle}{q_v^{js}} , &\text { if } x_v \in \mathcal{O}_v^* \otimes G^{\wedge}, \\ 0 ,& \text { if } x_v \notin \mathcal{O}_v^* \otimes G^{\wedge}.\end{cases}
\end{equation}
\newline
Since $\sum_{\substack{\psi_v \in \operatorname{Hom}\left(\mathcal{O}_v^*, G\right)\\ \Phi(\psi)= q_v^j}} \left\langle\psi_v, x_v\right\rangle = \sum_{\substack{\psi_v \in \operatorname{Hom}\left(\mathcal{O}_v^*/\OOj, G\right)}} \left\langle\psi_v, x_v\right\rangle - \sum_{\substack{\psi_v \in \operatorname{Hom}\left(\mathcal{O}_v^*/\OOjs, G\right)}} \left\langle\psi_v, x_v\right\rangle$,
by character orthogonality, we have
that, for $j \geq 2$, the sum $\sum_{\substack{\psi_v \in \operatorname{Hom}\left(\mathcal{O}_v^*, G\right)\\ \Phi(\psi)= q_v^j}} \left\langle\psi_v, x_v\right\rangle$ equals
\newline
\begin{align*}
    \begin{cases}
        |\left\{ \psi_v \in \Hom(\mathcal{O}_v^*, G): \Phi(\psi_v) = q_v^{j} \right\}|, &\text{if } x_v \in\OOj\otimes G^{\wedge}, \\
        - |\left\{ \psi_v \in \Hom(\mathcal{O}_v^*, G): \Phi(\psi_v)\leq q_v^{j-1} \right\}|, &\text{if }  x_v \in \OOjs\otimes G^{\wedge} \text{ and }  x_v \notin \OOj\otimes G^{\wedge} ,
        \\
        0, &\text{otherwise}.
    \end{cases}
\end{align*}
Moreover, Proposition \ref{MainCount} shows that if $j \geq 2$ we have:
\begin{align*}\label{caseforterms}
&\frac{\sum_{\substack{\psi_v \in \operatorname{Hom}\left(\mathcal{O}_v^*, G\right)\\ \Phi(\psi)= q_v^j}} \left\langle\psi_v, x_v\right\rangle }{|\Hom (\F_{q_v}^*, G_t)|} = 
    \\& \begin{cases}
q_v^{\sum^e_{m=1}r_m\left(j-1 -\left\lceil\frac{j-1}{p^m} \right\rceil\right)}
\left(q_v^{\sum^e_{m=1+v_p(j-1)}r_m}-1\right),&\text{} x_v \in \OOj\otimes G^{\wedge}, \\
    -q_v^{\sum^e_{m=1}r_m(G)\left(j-1-\left\lceil\frac{j-1}{p^m} \right\rceil\right)},&\text{}  x_v \in \OOjs\otimes G^{\wedge} \text{ and }  x_v \notin \OOj\otimes G^{\wedge} ,
        \\
        0, &\text{otherwise}.
    \end{cases}
\end{align*}
 After substituting the preceding result into (\ref{localFouriersimplified}) the assertion follows from the observation that when $v_p(j-1) \geq e$, we have $q_v^{\sum^e_{m=1+v_p(j-1)}r_m(G)} = 1$.
\end{proof}
To understand $\widehat{f}_{ v}$, we perform some calculations in the next lemma.
\begin{lemma}\label{LocationofPoleComputation}
    Let $j \geq 2$. Define 
    $$ z_j(G) = \frac{1 + \sum_{m=1}^er_m(G)\left(j-1-\left\lfloor\frac{j-1}{p^m}\right\rfloor\right) }{j}, \text{ } y_j(G) = \frac{1 + \sum_{m=1}^er_m(G)\left(j-1-\left\lceil\frac{j-1}{p^m}\right\rceil\right) }{j}.$$
    Then 
    \begin{enumerate}
    \item 
        $y_j(G) \leq z_j(G)$ for $j \geq 2 $.
        \item 
        $y_j(G) < z_j(G)$ for $p^f\mid j$\text{ with } $2 \leq j \leq p^e $.
        \item There exists $\varepsilon_j(G) >0$ such that
        $z_{j}(G)+ \varepsilon_j(G)=z_{p^e}(G) = a(G)$ for $  j \geq p^e +1$.
        \item 
        $1\leq z_{p^e}(G)$ with equality holds if and only if $G_w$ cyclic.
        \item 
        If $H_w \subsetneq G_w$, $z_{p^e}(H) < z_{p^e}(G).$
    \end{enumerate}
\end{lemma}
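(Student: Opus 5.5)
The plan is to rewrite $z_j(G)$ and $y_j(G)$ in terms of the cyclic invariants of $G_w$ and then compare them term by term. Throughout, $e$ and $f$ are those of $G$, and in \textit{(5)} the quantity $z_{p^e}(H)$ is formed with the same $e$. The basic identity is
\begin{equation*}
z_j(G)-y_j(G)=\frac{1}{j}\sum_{m=1}^e r_m(G)\Bigl(\Bigl\lceil\tfrac{j-1}{p^m}\Bigr\rceil-\Bigl\lfloor\tfrac{j-1}{p^m}\Bigr\rfloor\Bigr)=\frac{1}{j}\sum_{\substack{1\le m\le e\\ p^m\nmid j-1}} r_m(G)\ \ge 0,
\end{equation*}
which gives \textit{(1)} at once.

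For \textit{(2)} it suffices to find one $m$ with $r_m(G)>0$ and $p^m\nmid j-1$; we may assume $G_w$ is non-trivial, as otherwise the range $2\le j\le p^e$ is empty. If $f\ge1$, then $p\mid j$, so $p\nmid j-1$, and any $m$ with $r_m>0$ works. If $f=0$, i.e.\ $G_w$ is cyclic, take $m=e$, for which $r_e=1$. Since $1\le j-1\le p^e-1$, we have $p^e\nmid j-1$.

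For \textit{(3)}, first use $\lfloor (p^e-1)/p^m\rfloor=p^{e-m}-1$ to check that $z_{p^e}(G)=p^{-e}\bigl(1+\sum_m r_m(p^e-p^{e-m})\bigr)=a(G)$. For $j\ge p^e+1$, apply the bound $\lfloor x/p^m\rfloor\ge (x-p^m+1)/p^m$ with $x=j-1$. This gives $j-1-\lfloor (j-1)/p^m\rfloor\le j(1-p^{-m})$, hence
\begin{equation*}
j z_j(G)\le 1+j\sum_m r_m(1-p^{-m}) < \frac{j}{p^e}+j\sum_m r_m(1-p^{-m}) = j\,a(G),
\end{equation*}
where the strict inequality uses $j>p^e$. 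So $\varepsilon_j(G):=a(G)-z_j(G)>0$. The same computation also yields the uniform bound $\varepsilon_j\ge p^{-e}-j^{-1}\ge \frac{1}{p^e(p^e+1)}$, which may be useful later.

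For \textit{(4)}, list the cyclic factors of $G_w$ with exponents $m_1=e\ge m_2\ge\cdots$ and write $g(m)=1-p^{-m}$, so that
\begin{equation*}
a(G)=p^{-e}+\sum_i g(m_i)=1+\sum_{i\ge2} g(m_i).
\end{equation*}
This is $\ge1$, with equality exactly when there is no second factor, i.e.\ when $G_w$ is cyclic. The case $G_w$ trivial gives $a=1$ and is cyclic. For \textit{(5)}, use the standard fact that a subgroup $H_w\le G_w$ has cyclic invariants dominated componentwise, i.e.\ $m_i(H)\le m_i(G)$ after padding with zeros. Since $H_w\neq G_w$, the orders differ, so at least one inequality is strict. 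With $e$ fixed, $z_{p^e}(H)=p^{-e}+\sum_i g(m_i(H))$, and since $g$ is strictly increasing with $g(0)=0$ this is strictly less than $z_{p^e}(G)$.

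The step I expect to need most care is \textit{(5)}. One must use $G$'s exponent $e$ in $z_{p^e}(H)$, because with $H$'s own exponent the claim fails (e.g.\ $C_p\subset C_{p^2}$). One must also invoke the componentwise domination of invariant factors for subgroups of abelian $p$-groups, which I would cite, or prove via $|p^{k}H_w[p]|\le|p^kG_w[p]|$.
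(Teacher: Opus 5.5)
Your proof is correct, and for \textit{(1)}, \textit{(2)} and \textit{(4)} it is essentially the paper's argument. The paper also compares ceilings with floors for \textit{(1)}, uses a single index $m$ with $r_m(G)>0$ and $p^m\nmid j-1$ for \textit{(2)} (taking $m=f$ when $f>0$), and for \textit{(4)} observes that the factor with $m=e$ already accounts for $p^e-1$. In the cyclic case of \textit{(2)} your version is cleaner: the paper writes $y_j(G)=\frac{p^e-1}{p^e}$, which is only the value at $j=p^e$ (in general it is $\frac{j-1}{j}$).

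The differences are in \textit{(3)} and \textit{(5)}.

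For \textit{(3)}, the paper writes $j=w+kp^e$ and computes $z_{p^e}(G)-z_{w+kp^e}(G)$ exactly. Positivity comes from $\lfloor\frac{w-1}{p^m}\rfloor-\frac{w}{p^m}+1\ge 0$, which is the same floor bound you use. However, the paper states this only for $k\ge 2$ and $2\le w\le p^e-1$. That omits, for instance, $j=p^e+1$ and every $j\equiv 0,1\pmod{p^e}$. The formula does remain valid for $k\ge 1$ and $1\le w\le p^e$, so this is only a bookkeeping gap. Your one-line inequality covers all $j\ge p^e+1$ at once. It also gives $\varepsilon_j(G)\ge\frac{1}{p^e(p^e+1)}$ uniformly in $j$, which is what lets the choice of $\delta$ in Lemma~\ref{asympoticofTriviallocalft} be made independently of $j$.

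For \textit{(5)}, the paper only says it follows from the definition. Your argument uses componentwise domination of invariant factors, proved through $\dim_{\F_p}(p^{t}H_w)[p]\le\dim_{\F_p}(p^{t}G_w)[p]$. This is the same device the paper later uses in the proof of Proposition~\ref{which_cha_contri}\textit{(2)}. Your insistence that $z_{p^e}(H)$ be formed with $G$'s exponent is the right reading. With $H$'s own exponent one gets $a(H)$, which can equal $a(G)$ (e.g.\ when $G_w$ is cyclic), a possibility that proposition explicitly allows.
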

\begin{proof}
    \textit{(1)} follows from $\lceil x\rceil\geq \lfloor x \rfloor$.
    Now we prove \textit{(2)}. If $f = 0$, we have $y_j(G) = \frac{p^e-1}{p^e}<z_j(G) = 1.$ If $f >0,$ we have $r_f(G) >0$ and $p^f \nmid j -1$. 
    Hence $\left\lfloor\frac{j-1}{p^m}\right\rfloor < \left\lceil\frac{j-1}{p^m}\right\rceil$.
    To prove \textit{(3)},
    let $k \geq 2$ and $ 2 \leq w \leq p^e -1$, then
\begin{equation*}
    z_{p^e}(G)-z_{w+kp^e}(G)= 
    \frac{w+(k-1)p^e+p^e\sum_{m=1}^{e}r_m(G)\left(\left\lfloor\frac{w-1}{p^m}\right\rfloor - \frac{w}{p^m}+1\right)}{p^e(w+kp^e)} > 0,
\end{equation*}
as $\left(\left\lfloor\frac{w-1}{p^m}\right\rfloor - \frac{w}{p^m}+1\right) \geq 0$ and $w+(k-1)p^e > 0$. 
To prove \textit{(4)}, we observe that $z_{p^e}(G)=\frac{1+ \sum_{m=1}^er_m(G)(p^e- p^{e-m})}{p^e} $. If $G_w$ is cyclic, we have $z_{p^e}=1$. If $G_w$ is not cyclic, then $\sum_{m=1}^er_m(G)(p^e- p^{e-m}) > p^e -1 $ as $\sum_{m=1}^er_m(G) >1$.
Finally, \textit{(5)} follows from the definition of $z_j(G)$ and $y_j(G)$.
\end{proof}
\begin{lemma}\label{asympoticofTriviallocalft} 
There exists $\xi >0$ such that, for $\Real(s) \geq a(G)-\xi,$      
\begin{align}
      \widehat{f}_{ v}\left(x_v ; s\right) = 1&+ \frac{|\Hom (\F_{q_v}^*, G_t)| - 1 }{q_v^{s}} \label{TrivialLFTBigO} \\ &+  |\Hom (\F_{q_v}^*, G_t)|  \times\sum^{p^e}_{\substack{j=2\\v_p(j-1)\leq e-1}} \frac{q_v^{\sum_{m=1}^er_m(G)\left(j-1-\left\lfloor\frac{j-1}{p^m}\right\rfloor\right)}-q_v^{\sum_{m=1}^er_m(G)\left(j-1-\left\lceil\frac{j-1}{p^m}\right\rceil\right)}}{q_v^{js}} \nonumber \\ &+ O\left(\frac{1}{q_v^{\frac{s}{a(G)}+\xi}}\right). \nonumber
\end{align}
\end{lemma}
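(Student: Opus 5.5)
The plan is to start from the exact formula \eqref{trivialcharLocalFourierTransform} in Lemma~\ref{LocalFT}(4) for the trivial character. That formula is an infinite series over $j\ge 2$ with $v_p(j-1)\le e-1$. Its first terms, $j\le p^e$, are exactly the finite sum displayed in \eqref{TrivialLFTBigO}. So the statement reduces to bounding the tail $\sum_{j\ge p^e+1}$ by $O(q_v^{-\Real(s)/a(G)-\xi})$, with the hidden constant uniform in $v$. I read the error term as $O(q_v^{-\Real(s)/a(G)-\xi})$, i.e.\ at $\Real(s)$ near $a(G)$ it is $O(q_v^{-1-\xi'})$.

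For the tail I would drop the subtracted term, which only helps in absolute value. Then each term is at most $|\Hom(\F_{q_v}^*,G_t)|\,q_v^{jz_j(G)-1-j\Real(s)}$, with $z_j(G)$ as in Lemma~\ref{LocationofPoleComputation}. Here $|\Hom(\F_{q_v}^*,G_t)|\le |G_t|$ is bounded independently of $v$.

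The key input is a \emph{uniform} gap $z_j(G)\le a(G)-\epsilon_0$ for all $j\ge p^e+1$. Lemma~\ref{LocationofPoleComputation}(3) only gives some $\varepsilon_j>0$ for each $j$, so uniformity has to be extracted from the explicit difference computed in its proof. For $j=w+kp^e$ that difference is at least $\frac{w+(k-1)p^e}{p^e(w+kp^e)}$. For $k\ge 2$ this is $\ge \frac{1}{2p^e}\cdot\frac{k-1}{k}\cdot$const $\ge c>0$. The residual range $p^e<j<2p^e$ is finite and can be checked directly, using the bracket $\lfloor\frac{j-1}{p^m}\rfloor-\frac{j}{p^m}+1\ge 0$ together with the positive term $w+(k-1)p^e$ with $k=1$. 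I expect this step to be the main obstacle: getting a clean uniform $\epsilon_0$ (perhaps $\epsilon_0=\frac{1}{2p^{2e}}$) and handling the edge values $w\in\{0,1\}$ that the lemma's parametrisation omits.

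Given the gap, for $\Real(s)\ge a(G)-\xi$ the $j$-th tail term is $\le |G_t|\,q_v^{-1-j(\Real(s)-a(G)+\epsilon_0)}\le |G_t|\,q_v^{-1-j(\epsilon_0-\xi)}$. Choosing $\xi<\epsilon_0/2$, the geometric series in $j\ge p^e+1$ sums to $\ll q_v^{-1-(p^e+1)\epsilon_0/2}$. It remains to compare this with the required shape $q_v^{-\Real(s)/a(G)-\xi}$. Since $\Real(s)/a(G)$ lies within $O(\xi)$ of $1$ on the relevant range, and the tail exponent is a fixed amount below $-1$ after shrinking $\xi$, the bound follows. (For larger $\Real(s)$, both sides only decay faster, because each term carries the factor $q_v^{-j\Real(s)}$ with $j\ge p^e+1>1/a(G)\cdot$const; I would verify this monotone comparison directly.) Convergence of the full series in this region is already ensured by Proposition~\ref{absconvergence:delta}, which legitimises the splitting.
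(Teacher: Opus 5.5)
Your proposal is correct and follows essentially the paper's route. Both arguments isolate the tail $j\ge p^e+1$ of the series in Lemma~\ref{LocalFT}(4) and show that its $q_v$-exponents lie strictly below $-\Real(s)/a(G)$, because $z_j(G)<a(G)$ there (Lemma~\ref{LocationofPoleComputation}(3)). The paper does this by a per-term comparison with a large $\delta$; you use a uniform gap and a geometric tail sum, which supplies the uniformity in $j$ and $v$ that the paper leaves implicit. Your anticipated obstacle also disappears: your lower bound $\frac{w+(k-1)p^e}{p^e(w+kp^e)}$ is exactly $p^{-e}-\frac{1}{j}$, so $a(G)-z_j(G)\ge \frac{1}{p^e(p^e+1)}$ for every $j\ge p^e+1$ with no case distinction in $w$ or $k$.
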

\begin{proof}
    Let $u>0$. We denote $\sum_{m=1}^er_m(G)\left(j-1-\left\lfloor\frac{j-1}{p^m}\right\rfloor\right)$ by $\sigma_j(G)$. By Proposition \ref{LocationofPoleComputation}, we have
    \begin{equation*}
        \frac{ju}{1+\sum_{m=1}^er_m(G)\left(j-1-\left\lfloor\frac{j-1}{p^m}\right\rfloor\right)} = \frac{u}{a(G)-\varepsilon_j(G)}.
    \end{equation*}
Therefore,
    \begin{equation}\label{rhs}
        ju-\sigma_j(G) -\frac{u}{a(G)} = \frac{\varepsilon_j(G)ju}{a(G)} - \left(1- \frac{u}{a(G)} \right)\sigma_j(G).
    \end{equation}
we observe that, for $\delta >1$,
$$\min_{u \geq a(G) -\frac{1}{\delta p^e}} \left\{\frac{\varepsilon_j(G)ju}{a(G)} - \left(1- \frac{u}{a(G)}  \right)\sigma_j(G)\right\} =\frac{1}{a(G)}\left(\varepsilon_j(G)ja(G)-\frac{\sigma_j(G)+1}{\delta p^e}\right ).$$ Hence, the right-hand side of (\ref{rhs}) is positive by choosing suitably large $\delta$. Recall from Proposition \ref{absconvergence:delta}, $\widehat{f}_v\left(x_v ; s\right)$ converges absolutely for $\operatorname{Re}(s) \geq a(G)- \frac{1}{\delta p^e}$ with any $\delta >1$. Moreover, our calculation here shows that there exist $\delta$ such that the exponent of $q_v$ is no more than $ \frac{s}{a(G)}$ with $\Real(s)\geq a(G)- \frac{1}{\delta p^e}$.
\end{proof}

\section{Global Fourier transforms and Poisson summation}
\subsection{Global Fourier transforms}
In this section, we calculate the conductor zeta function using Poisson summation. The function $f_W / \Phi^s$ is a product of local functions $f_v/ \Phi_v^s$ on $\operatorname{Hom}\left(k_v^*, G\right)$. We define its global Fourier transform to be
$$
\widehat{f}_{W, G}(x ; s)=\int_{\chi \in \operatorname{Hom}\left(\mathbf{A}^*, G\right)} \frac{f_{W}(\chi)\langle\chi, x\rangle}{\Phi(\chi)^s} \mathrm{~d} \chi,
$$
where $x=\left(x_v\right)_v \in \mathbf{A}^* \otimes G^{\wedge}$.  We use $\widehat{f}_{W}$ for shorthand when there is no ambiguity.

Recall from Proposition \ref{absconvergence:delta}, we proved that the local Fourier transforms converge absolutely for $\operatorname{Re}(s)  \geq a(G) - \frac{1}{\delta p^e}$. By Lemma \ref{asympoticofTriviallocalft}, we conclude that
\begin{equation}\label{globalproductlocal}
    \widehat{f}_{W, G}(x ; s)=\prod_v \widehat{f}_{W_v, G}\left(x_v ; s\right), \quad \text{ for } \operatorname{Re}(s)  \gg 1. 
\end{equation}
Later in this section, we will make a continuation of the Euler product (\ref{globalproductlocal}) using Dedekind zeta functions.
\subsection{Poisson summation}
Now we prove a version of Poisson summation. This is the function field version of \cite[Prop.~3.9]{Frei_2022}. Recall that $S  \subseteq \Omega_k$ is a finite set of places where local conditions are imposed. To prove Theorem \ref{maintheorem}, we may enlarge $S$ by finitely many places. Therefore, in order to apply Poisson summation, we enlarge $S$ such that $\mathcal{O}_S$ has trivial class group.

 We view $\mathcal{O}_S^* \otimes G^{\wedge}$ as a subgroup of $k^* \otimes G^{\wedge}$ in the following way: we have the exact sequence
\begin{equation}\label{fractionalidealses} 0 \rightarrow \mathcal{O}_S^* \rightarrow k^* \rightarrow P\left(\mathcal{O}_S\right) \rightarrow 0,
\end{equation}
where $P\left(\mathcal{O}_S\right)$ denotes the group of non-zero principal fractional ideals of $\mathcal{O}_S$. Since $P\left(\mathcal{O}_S\right)$ is a free abelian group, we have $\operatorname{Tor}\left(P\left(\mathcal{O}_S\right), G^{\wedge}\right)=0$. Therefore, applying $(\cdot) \otimes G^{\wedge}$ to (\ref{fractionalidealses}) we find that the map $\mathcal{O}_S^* \otimes G^{\wedge} \rightarrow k^* \otimes G^{\wedge}$ is injective, as required.
\begin{proposition}\label{PossionS:the prop}
For $\operatorname{Re} s  \gg 1$, the Fourier transform $\widehat{f}_{W, G}(\cdot ; s)$ defines a holomorphic function on this domain. Moreover, if $S \subseteq \Omega_k$ is finite such that $\mathcal{O}_S$ has trivial class group we have the Poisson formula
\begin{equation}\label{Possionsummation}
\sum_{\chi \in \operatorname{Hom}\left(\mathbf{A}^* / k^*, G\right)} \frac{f_{W}(\chi)}{\Phi(\chi)^s}=\frac{|G|}{\left|\F^{^*}_{q} \otimes G^{\wedge}\right|} \sum_{x \in \mathcal{O}_S^* \otimes G^{\wedge}} \widehat{f}_{W, G}(x ; s), \quad \operatorname{Re} s \gg 1 .
\end{equation}
Note that the group $\mathcal{O}_S^* \otimes G^{\wedge}$ is finite by Dirichlet's $S$-unit theorem; in particular, the right-hand sum is finite.
\end{proposition}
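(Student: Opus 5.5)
We follow the strategy of \cite[Prop.~3.9]{Frei_2022}, built on the Poisson summation formula for the locally compact abelian group $\operatorname{Hom}(\AS^*,G)$ and its closed subgroup $\operatorname{Hom}(\AS^*/k^*,G)$.

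\textbf{Holomorphy.} By Proposition~\ref{absconvergence:delta}, each local transform converges absolutely on a right half-plane and is holomorphic there. For $v\notin S$ and $\Real(s)\gg1$, Lemma~\ref{LocalFT} and Lemma~\ref{asympoticofTriviallocalft} give $|\widehat f_v(x_v;s)-1|\ll q_v^{-\Real(s)/a(G)-\xi}$ up to the term $(|\Hom(\F_{q_v}^*,G_t)|-1)q_v^{-s}$, uniformly in $x_v$. Since $|\Hom(\F_{q_v}^*,G_t)|\le|G|$, the product $\prod_v\widehat f_v(x_v;s)$ converges absolutely and locally uniformly once $\Real(s)$ is large. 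Hence \eqref{globalproductlocal} holds and $\widehat f_{W,G}(x;s)$ is holomorphic. Lemma~\ref{computelocalinOv} shows it vanishes unless $x_v\in\OO\otimes G^\wedge$ for all $v\notin S$.

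\textbf{Poisson summation.} Apply Poisson summation to the function $\chi\mapsto f_W(\chi)\Phi(\chi)^{-s}$ on $\operatorname{Hom}(\AS^*,G)$, using the closed subgroup $\operatorname{Hom}(\AS^*/k^*,G)$. The latter is closed because it is the annihilator of the image of $k^*$; its compactness follows from the compactness of $\AS^*/k^*$ modulo degree, together with finiteness of $G$. By Lemma~\ref{TopolocyofA} and \cite[\S3.1]{MR3884640}, its annihilator in the dual $\AS^*\otimes G^\wedge$ is the image of $k^*\otimes G^\wedge$. This gives
\[
\sum_{\chi\in\operatorname{Hom}(\AS^*/k^*,G)}\frac{f_W(\chi)}{\Phi(\chi)^s}=\frac{1}{\operatorname{vol}\bigl(\operatorname{Hom}(\AS^*/k^*,G)\bigr)}\sum_{x\in k^*\otimes G^\wedge}\widehat f_{W,G}(x;s),
\]
provided the usual integrability hypotheses hold. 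Absolute convergence of the left side for $\Real(s)\gg1$ follows from the local counts in Proposition~\ref{MainCount}. The right side reduces to a finite sum, by the support statement above.

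\textbf{Restricting the dual sum.} The support condition means we only sum over $x\in k^*\otimes G^\wedge$ lying in $\OO\otimes G^\wedge$ for every $v\notin S$. Because $\mathcal O_S$ has trivial class group, the sequence \eqref{fractionalidealses}, tensored with $G^\wedge$, identifies these $x$ with $\mathcal O_S^*\otimes G^\wedge$. Indeed, an element of $k^*\otimes G^\wedge$ whose image in $P(\mathcal O_S)\otimes G^\wedge$ is locally trivial at all $v\notin S$ must be trivial there, since $P(\mathcal O_S)$ is free on the primes outside $S$. Injectivity of $\mathcal O_S^*\otimes G^\wedge\to k^*\otimes G^\wedge$ was noted just before the proposition.

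\textbf{Volume computation (main obstacle).} We must show $\operatorname{vol}(\operatorname{Hom}(\AS^*/k^*,G))=|\F_q^*\otimes G^\wedge|/|G|$ for the product measure normalised by $\operatorname{vol}\operatorname{Hom}(k_v^*/\OO,G)=1$. I would use the exact sequence
\[
1\to \mathcal O_S^*\to \prod_{v\in S}k_v^*\times\prod_{v\notin S}\OO\to \AS^*/k^*\to \operatorname{Cl}(\mathcal O_S)=1,
\]
and take $\Hom(-,G)$. The measure of $\Hom(\AS^*/k^*,G)$ then becomes an index computation involving $|\Hom(\mathcal O_S^*,G)|$ compared with the contributions $|G|$ per place in $S$. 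By the $S$-unit theorem, $\mathcal O_S^*\cong\F_q^*\times\Z^{|S|-1}$; note the rank is $|S|-1$, not $|S|$, which is where the function-field degree map enters. The $\Z^{|S|-1}$ part cancels against the $|G|^{|S|}$ place contributions, leaving the factor $|G|$ and the torsion term $|\F_q^*\otimes G^\wedge|=|\Hom(\F_q^*,G)|$. The delicate point is the failure of exactness on the right for $\Hom(-,G)$ (the Ext-terms), and this is the step I would need to check most carefully.
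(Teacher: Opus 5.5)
\textbf{Gap: the constant is not established.} Your route is different from the paper's: you apply abstract Poisson summation to $\Hom(\mathbf{A}^*/k^*,G)\subset\Hom(\mathbf{A}^*,G)$ and then compute a volume. As it stands, it does not prove the constant $|G|/|\F_q^*\otimes G^{\wedge}|$, which is the real content of \eqref{Possionsummation}. Two things go wrong.

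First, the normalisation is set up incorrectly. $\Hom(\mathbf{A}^*/k^*,G)$ is not compact. The group $\mathbf{A}^*/k^*$ has a compact subgroup (degree-zero classes) with quotient $\Z$. Hence $\Hom(\mathbf{A}^*/k^*,G)$ is an infinite discrete subgroup of $\Hom(\mathbf{A}^*,G)$, which is why the left side is a sum, and it has Haar measure zero there. The factor in Poisson summation for a discrete subgroup is $1/\operatorname{vol}\bigl(\Hom(\mathbf{A}^*,G)/\Hom(\mathbf{A}^*/k^*,G)\bigr)$, the covolume of a compact quotient. So your target identity ``$\operatorname{vol}(\Hom(\mathbf{A}^*/k^*,G))=|\F_q^*\otimes G^\wedge|/|G|$'' is not meaningful as written.

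Second, you leave the covolume computation open exactly where it matters. Along your route it goes as follows. Since $\mathbf{A}^*=k^*\mathbf{A}_S^*$, the volume-one compact subgroup $\prod_{v\notin S}\Hom(k_v^*/\mathcal{O}_v^*,G)$ meets $\Hom(\mathbf{A}^*/k^*,G)$ trivially. The covolume therefore equals $|G|^{-|S|}$ times the index of $\Hom(\mathbf{A}_S^*/\mathcal{O}_S^*,G)$ in the discrete group $\Hom(\mathbf{A}_S^*,G)$. That index is the size of the image of the restriction map $\Hom(\mathbf{A}_S^*,G)\to\Hom(\mathcal{O}_S^*,G)$.

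To get $|\F_q^*\otimes G^\wedge|/|G|$ you need this restriction to be surjective. That is dual to injectivity of $\mathcal{O}_S^*\otimes G^\wedge\to\mathbf{A}_S^*\otimes G^\wedge$: an $S$-unit that is an $n$-th power at every place must be a global $n$-th power. This follows from Grunwald--Wang for the prime-to-$p$ part and Lemma~\ref{lem: generalized-ABdul-Pth-valuation} for $p$-powers, and it is the missing input. The same injectivity is used silently when you index the dual sum by $k^*\otimes G^\wedge$ rather than by its image in $\mathbf{A}^*\otimes G^\wedge$. You also leave the integrability hypotheses of the abstract Poisson formula unchecked.

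\textbf{How the paper avoids this.} For $x\in\mathcal{O}_S^*\otimes G^\wedge$, Lemma~\ref{computelocalinOv} gives $\widehat f_{W,G}(x;s)=|G|^{-|S|}\sum_{\chi\in\Hom(\mathbf{A}_S^*,G)}f_W(\chi)\langle\chi,x\rangle\Phi(\chi)^{-s}$. The paper then:
\begin{enumerate}
\item sums this over the finite group $\mathcal{O}_S^*\otimes G^\wedge$ and interchanges the sums by absolute convergence;
\item applies orthogonality to the character $x\mapsto\langle\chi,x\rangle$ of that finite group, which leaves exactly the $\chi$ trivial on $\mathcal{O}_S^*$, each weighted by $|\mathcal{O}_S^*\otimes G^\wedge|=|\F_q^*\otimes G^\wedge|\,|G|^{|S|-1}$;
\item identifies $\mathbf{A}_S^*/\mathcal{O}_S^*\cong\mathbf{A}^*/k^*$ using the class number assumption.
\end{enumerate}
Orthogonality counts the abstract group $\mathcal{O}_S^*\otimes G^\wedge$ with its full order. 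So no injectivity, Ext-vanishing, covolume or general Poisson formula is needed; any kernel would cancel exactly against the index defect.

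Your holomorphy argument and your reduction of the dual sum to $\mathcal{O}_S^*\otimes G^\wedge$ via the trivial class group are fine.
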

\begin{proof}
Let $x \in \mathcal{O}_S^* \otimes G^{\wedge}$. Let $x_v$ denote its image in $k_v^* \otimes G^{\wedge}$. Recall that we have normalised our Haar measures on $\operatorname{Hom}\left(k_v^*, G\right)$ to be $|G|^{-1}$ times the counting measure.
    
Now Lemma \ref{computelocalinOv} and (\ref{globalproductlocal}) gives
\begin{align*}
\widehat{f}_{W, G}(x ; s) & =\frac{1}{|G|^{\left|S\right|}} \prod_{v \in S} \sum_{\chi_v \in \operatorname{Hom}\left(k_v^*, G\right)} \frac{f_{W_v}(\chi_v)\left\langle\chi_v, x_v\right\rangle}{\Phi_v\left(\chi_v\right)^s} \prod_{v \notin S} \sum_{\chi_v \in \operatorname{Hom}\left(\mathcal{O}_v^*, G\right)} \frac{f_{W_v}(\chi_v)\left\langle\chi_v, x_v\right\rangle}{\Phi_v\left(\chi_v\right)^s} \\
& =\frac{1}{|G|^{\left|S\right|}} \sum_{\chi \in \operatorname{Hom}\left(\mathbf{A}_S^*, G\right)} \frac{f_{W}(\chi)\langle\chi, x\rangle}{\Phi(\chi)^s},
\end{align*}
where $\mathbf{A}_S^*=\prod_{v \in S} k_v^* \times \prod_{v \notin S} \mathcal{O}_v^*$. Since $\mathcal{O}_S^* \otimes G^{\wedge} $  is finite and the sum over $\chi$ is absolutely convergent (\ref{absconvergence:delta}), changing the order of summation yields 
$$
\sum_{x \in \mathcal{O}_S^* \otimes G^{\wedge}} \widehat{f}_{W, G}(x ; s)=\frac{1}{|G|^{\left|S\right|}} \sum_{\chi \in \operatorname{Hom}\left(\mathbf{A}_S^*, G\right)} \frac{f_{W}(\chi)}{\Phi(\chi)^s} \sum_{x \in \mathcal{O}_S^* \otimes G^{\wedge}}\langle\chi, x\rangle .
$$

As $\mathbf{A}_S^*$ and $\mathbf{A}_S^* / \mathcal{O}_S^*$ are locally compact groups and their subgroups of $n$-th powers are closed by Lemma \ref{TopolocyofA}, an application of \cite[Lem.~3.2]{MR3884640}, gives canonical isomorphisms of abelian groups $\operatorname{Hom}\left(\mathbf{A}_S^*, G\right) \cong\left(\mathbf{A}_S^* \otimes G^{\wedge}\right)^{\wedge}$ and $\operatorname{Hom}\left(\mathbf{A}_S^* / \mathcal{O}_S^*, G\right) \cong$ $\left(\mathbf{A}_S^* / \mathcal{O}_S^* \otimes G^{\wedge}\right)^{\wedge}$. Therefore, we can view an element $\chi \in \operatorname{Hom}\left(\mathbf{A}_S^*, G\right)$ as a character of $\mathbf{A}_S^* \otimes G^{\wedge}$. It is easily seen that $\chi$ induces the trivial character on $\mathcal{O}_S^* \otimes G^{\wedge}$ if and only if $\chi \in \operatorname{Hom}\left(\mathbf{A}_S^* / \mathcal{O}_S^*, G\right)$. Thus, we may apply character orthogonality to find that
$$
\sum_{x \in \mathcal{O}_S^* \otimes G^{\wedge}}\langle\chi, x\rangle= \begin{cases}\left|\mathcal{O}_S^* \otimes G^{\wedge}\right|, & \text { if } \chi \in \operatorname{Hom}\left(\mathbf{A}_S^* / \mathcal{O}_S^*, G\right), \\ 0, & \text { otherwise }.\end{cases}
$$
We therefore obtain
$$
\sum_{x \in \mathcal{O}_S^* \otimes G^{\wedge}} \widehat{f}_{W, G}(x ; s)=\frac{\left|\mathcal{O}_S^* \otimes G^{\wedge}\right|}{|G|^{\left|S\right|}} \sum_{\chi \in \operatorname{Hom}\left(\mathbf{A}_S^* / \mathcal{O}_S^*, G\right)} \frac{f_{W}(\chi)}{\Phi(\chi)^s}.
$$
Dirichlet's $S$-unit theorem gives an  isomorphism $\mathcal{O}_S^* \cong \F_q^* \times \mathbb{Z}^{|S| - 1}$, therefore,
$$
\frac{\left|\mathcal{O}_S^* \otimes G^{\wedge}\right|}{|G|^{\left|S\right|}}=\frac{\left|\F_q^*   \otimes G^{\wedge}\right|}{|G|}.
$$
Moreover, as $\mathcal{O}_S$ has trivial class group, the natural map $\mathbf{A}_S^* / \mathcal{O}_S^* \rightarrow \mathbf{A}^* / k^*$ is an isomorphism \cite[Lem.~2.8]{MR2581243}.
\end{proof}
\subsection{Analytic continuation of the global Fourier transforms} 

From Proposition \ref{PossionS:the prop} and (\ref{globalproductlocal}), we have that
\begin{equation}\label{Possionsummationwithlocal}
\sum_{\chi \in \operatorname{Hom}\left(\mathbf{A}^* / k^*, G\right)} \frac{f_{W}(\chi)}{\Phi(\chi)^s}=\frac{|G|}{\left|\F_q^* \otimes G^{\wedge}\right|} \sum_{x \in \mathcal{O}_S^* \otimes G^{\wedge}} \prod_v \widehat{f}_{W_v, G}\left(x_v ; s\right), \quad \operatorname{Re} s>M .
\end{equation}

 We define the following notation to make the rest of our calculation easier to read.
 Let $a \leq b$ and let $g(s)$ and $f(s)$ be holomorphic functions on the half-plane $ \Real s>b$. We write $f(s)\approx_a g(s)$
if there exists a holomorphic function $t(s)$ on $\Real(s) > a$ such that $f(s) = t(s)g(s)$ for $\Real(s) > b$.
\begin{proposition}
    Let $p\nmid l$ be a positive integer. Consider the constant extension $L = k(\mu_l)/k$ where $\mu_l$ denotes the $l$-th roots of unity.
    Then the Dedekind zeta function of $L$ satisfies
\begin{equation}\label{dedekindofL}
        \zeta_L(s) \approx_\frac{1}{2} \prod_{\substack{v\in \Omega_k \\ \text{ splits completely}}}\left(1+\frac{[L:k]}{q_v^s}\right),
    \end{equation}
    where the products runs over all the places  $v \in \Omega_k$ such that $v$ splits completely in the field extension $L/k$. 
\end{proposition}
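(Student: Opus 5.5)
We compare Euler products place by place, working directly with the Euler product of $\zeta_L$ over places of $k$. Put $n=[L:k]$. Since $L/k$ is a constant extension it is unramified everywhere. For a place $v$ of $k$ of degree $\deg v$, the residue degree $f_v$ of $v$ in $L$ is $n/\gcd(n,\deg v)$, and $v$ splits into $g_v=\gcd(n,\deg v)$ places of $L$, each with norm $q_v^{f_v}$. Hence
\begin{equation*}
\zeta_L(s)=\prod_{v\in\Omega_k}\bigl(1-q_v^{-f_v s}\bigr)^{-g_v},\qquad \Real(s)>1,
\end{equation*}
and $v$ splits completely exactly when $f_v=1$, i.e.\ when $n\mid \deg v$. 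This step uses only \eqref{eq:chi-frobv}: the Frobenius at $v$ in $\Gal(L/k)$ is $\Frob_q^{\deg v}$.

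We then split the product into two parts.

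\emph{Non-split places.} If $f_v\geq 2$, the local factor satisfies $(1-q_v^{-f_vs})^{-g_v}=1+O(g_v q_v^{-2\Real(s)})$, with $g_v\le n$ bounded. Since $\sum_v q_v^{-2\sigma}$ converges for $\sigma>\tfrac12$ (compare with $\zeta_k(2\sigma)$, Lemma~\ref{lemma:value_zeta_function}), this product converges absolutely. It is therefore holomorphic on $\Real(s)>\tfrac12$. Because each factor is nonvanishing there, the product is also nonvanishing.

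\emph{Split places.} If $f_v=1$ and $g_v=n$, we write
\begin{equation*}
(1-q_v^{-s})^{-n}=\Bigl(1+\frac{n}{q_v^s}\Bigr)\cdot E_v(s),\qquad E_v(s)=(1-q_v^{-s})^{-n}\Bigl(1+\frac{n}{q_v^s}\Bigr)^{-1}=1+O_n(q_v^{-2\Real(s)}).
\end{equation*}
The product $\prod_v E_v(s)$ again converges absolutely on $\Real(s)>\tfrac12$. The one care needed is that the factors $1+nq_v^{-s}$ may vanish for finitely many small $q_v$ when $\Real(s)$ is near $\tfrac12$, namely when $q_v^{1/2}\le n$. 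Those finitely many places are handled separately: their contributions $(1-q_v^{-s})^{-n}$ and $(1+nq_v^{-s})^{-1}$ are meromorphic. To obtain a genuinely holomorphic $t(s)$, we write instead
\begin{equation*}
\prod_{v\text{ split}}\Bigl(1+\frac{n}{q_v^s}\Bigr)=\zeta_L(s)\,t(s)
\end{equation*}
with
\begin{equation*}
t(s)=\prod_{v\text{ split}}E_v(s)^{-1}\prod_{v\text{ non-split}}\bigl(1-q_v^{-f_vs}\bigr)^{g_v}.
\end{equation*}
Each factor of $E_v^{-1}=(1-q_v^{-s})^n(1+nq_v^{-s})$ is entire, and $E_v^{-1}=1+O(q_v^{-2\sigma})$, so this product converges to a holomorphic function on $\Real(s)>\tfrac12$. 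The same holds for the non-split product. This gives exactly the relation $\approx_{1/2}$ in the direction stated in \eqref{dedekindofL}.

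The main obstacle is the direction and the uniformity of the error terms. One must check that the $O(q_v^{-2\sigma})$ bounds are uniform in $v$: this holds because the implied constants depend only on $n$, and because $\#\{v:\deg v=d\}\le q^d/d+O(q^{d/2})$ gives local uniform convergence on compacta of $\Real(s)>\tfrac12$. With that in place, Weierstrass' theorem gives holomorphy of $t$.
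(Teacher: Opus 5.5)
Your argument is the paper's argument: regroup the Euler product of $\zeta_L$ by places of $k$, then note that non-split places ($f_v\ge 2$) contribute $1+O(q_v^{-2\sigma})$ and split places contribute $(1-q_v^{-s})^{-n}=(1+nq_v^{-s})(1+O(q_v^{-2\sigma}))$. You also supply the Frobenius bookkeeping and the uniformity that the paper's one-line proof omits.

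Your last sentence, however, is wrong. The paper defines $f\approx_a g$ to mean $f=t\,g$ with $t$ holomorphic on $\Real(s)>a$. So \eqref{dedekindofL} asks for $\zeta_L=t\,P$, where $P(s)=\prod_{v\text{ split}}(1+nq_v^{-s})$. What you prove is $P=t\,\zeta_L$, i.e.\ $P\approx_{1/2}\zeta_L$. The relation is not symmetric, and the switch you made to avoid the zeros of $1+nq_v^{-s}$ is exactly what reversed it.

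To recover the stated direction, you must show that your $t$ has no zeros in $\Real(s)>\tfrac12$. Your product for $t$ converges absolutely and locally uniformly, so its zeros there are those of the factors $1+nq_v^{-s}$ with $v$ split. These zeros lie on $\Real(s)=\log n/\log q_v$. Split places have $n\mid\deg v$, hence $q_v\ge q^n$. So this real part exceeds $\tfrac12$ only when $q^n<n^2$, which forces $q=2$, $n=3$ (so $l=7$) and $\deg v=3$. Outside this case $t$ is zero-free, $1/t$ is holomorphic, and both directions hold; you need to add this step.

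In the exceptional case the statement as written is actually false. Take $k=\F_2(T)$, $l=7$, $L=\F_8(T)$. The two cubic places contribute $(1-8^{-s})^{-6}(1+3\cdot 8^{-s})^{-2}$ to $\zeta_L/P$, and the remaining factors are holomorphic and non-vanishing there. Hence $\zeta_L/P$ has double poles on $\Real(s)=\log 3/\log 8>\tfrac12$, and no holomorphic $t$ with $\zeta_L=t\,P$ exists on $\Real(s)>\tfrac12$.

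What holds in general is precisely your direction $P\approx_{1/2}\zeta_L$. This is also the direction actually used in the proof of Proposition~\ref{prop_trivial_char_decomp}, where the Euler product sits on the left and $\zeta_{k(\mu_\alpha)}$ on the right. So your computation is the right one, and it is more careful than the paper's, which overlooks these zeros altogether. You should state your conclusion as $P\approx_{1/2}\zeta_L$. Obtain $\zeta_L\approx_{1/2}P$ only via the zero-freeness argument above, or on the smaller half-plane $\Real(s)>\max\{\tfrac12,\log n/(n\log q)\}$. Do not assert that what you proved is the direction in \eqref{dedekindofL}.
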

\textit{Proof.}
Since $L/k$ is a constant extension, it is everywhere unramified \cite[Prop.~8.5]{Rosen_2002}, and a place splits completely if and only if its Frobenius is trivial. Recall, 
$ \zeta_L(s):= \prod_{\omega\in\Omega_{L}} \left(1-q^{-s\deg \omega}\right)^{-1}  = \prod_{v\in \Omega_k}\prod_{\substack{w\in\Omega_L\\w\mid v}}\left(1-\frac{1}{q_w^s}\right)^{-1}$.
As $L/k$ is Galois, we have
\begin{align*}
   \prod_{v\in \Omega_k}\left(1-\frac{1}{q_v^{f_vs}}\right)^{-|\{w \in \Omega_{k(\mu_\alpha)} :  w \mid v\}|}
    &\approx_\frac{1}{2} \prod_{v\in \Omega_k}\left(1+\frac{|\{w \in \Omega_{k(\mu_\alpha)} : w\mid v\}|}{q_v^{f_vs}}\right)
    \\ &\approx_\frac{1}{2} \prod_{\substack{v\in \Omega_k \\ \text{ splits completely}}}\left(1+\frac{[L:k]}{q_v^s}\right). 
    \quad \quad\quad\quad\quad\quad \quad \quad\qed
\end{align*}
We now calculate the global Fourier transform at $x = \mathbf{1}$.
\begin{proposition}\label{prop_trivial_char_decomp}
There exists $\epsilon >0$ and a function $D(k,G,W;s)$ which is holomorphic in the region $\Real(s) \geq a(G) - \epsilon,$ such that
\begin{equation*}
    \prod_v \widehat{f}_{W_v, G}\left(\mathbf{1}_v ; s\right) = D(k,G,W;s)\Theta(k,G,W;s),
\end{equation*}
where
\begin{align*}
\Theta(k,G,W;s):=&\zeta_k(s)^{-1}\prod_{\alpha \mid |G_t|}\zeta_{k(\mu_\alpha)}(s)^{d(k,\alpha)}\\
&\times\prod^{\infty}_{\substack{j=2\\v_p(j-1)\leq e-1}}\prod_{\alpha\,\mid \, |G_t|}\zeta_{k(\mu_\alpha)}\left(js- \sum_{m=1}^er_m(G)\left(j-1-\left\lfloor\frac{j-1}{p^m}\right\rfloor\right)\right)^{d(k,\alpha)} \\
&\times\prod^{\infty}_{\substack{j=2\\v_p(j-1)\leq e-1}}\prod_{\alpha\,\mid \, |G_t|}\zeta_{k(\mu_\alpha)}\left(js- \sum_{m=1}^er_m(G)\left(j-1-\left\lceil\frac{j-1}{p^m}\right\rceil\right)\right)^{-d(k,\alpha)}.
\end{align*}
\end{proposition}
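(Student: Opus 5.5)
Our approach is to compare the Euler product $\prod_v \widehat{f}_{W_v,G}(\mathbf{1}_v;s)$ factor by factor with $\Theta(k,G,W;s)$, using the frobenian machinery of Proposition~\ref{Prop:Frob_func_into_zeta} and the explicit local formula \eqref{trivialcharLocalFourierTransform}. The places $v\in S$ form a finite set. There $\widehat{f}_{W_v}(\mathbf{1}_v;s)$ is absolutely convergent for $\Real(s)\ge a(G)-\frac{1}{\delta p^e}$ by Proposition~\ref{absconvergence:delta}, so each is holomorphic there. The corresponding Euler factors of $\Theta$ are also holomorphic and non-vanishing in a suitable region, since each argument has real part $>1/2$. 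Hence these places only modify $D$ by a holomorphic factor, and we may restrict attention to $v\notin S$.

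\textbf{Step 1: the tame count is frobenian.} The key arithmetic input is the computation
\[
|\Hom(\F_{q_v}^*,G_t)|=\sum_{\alpha\mid |G_t|} r(\alpha)\,\#\{\text{elements of order }\alpha\text{ in }\mu_{q_v-1}\}\cdot\mathbf 1_{\alpha\mid q_v-1}=\sum_{\alpha\mid|G_t|}r(\alpha)\phi(\alpha)\mathbf 1_{\alpha\mid q_v-1}.
\]
Now $\alpha\mid q_v-1$ holds exactly when $v$ splits completely in $k(\mu_\alpha)$. Thus $\rho(v):=|\Hom(\F_{q_v}^*,G_t)|$ is an $\emptyset$-frobenian function, a sum of indicators of split-completely sets in constant extensions. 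By \eqref{dedekindofL},
\[
\prod_v\Bigl(1+\tfrac{r(\alpha)\phi(\alpha)\mathbf 1_{\alpha\mid q_v-1}}{q_v^{w}}\Bigr)\approx_{1/2}\zeta_{k(\mu_\alpha)}(w)^{d(k,\alpha)},
\]
where the exponent is $r(\alpha)\phi(\alpha)/[k(\mu_\alpha):k]=d(k,\alpha)$. This is valid via $(1+cx)=(1+x)^c(1+O(x^2))$; rather than appeal to the general Proposition~\ref{Prop:Frob_func_into_zeta}, it is cleaner to do it directly here, since every character involved is constant.

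\textbf{Step 2: match the Euler factors.} Write the local factor \eqref{trivialcharLocalFourierTransform} as
\[
1+\frac{\rho(v)-1}{q_v^s}+\rho(v)\sum_j\bigl(q_v^{-(js-\sigma_j)}-q_v^{-(js-\tau_j)}\bigr),
\]
where $\sigma_j,\tau_j$ are the floor and ceiling exponents. Then divide by the $v$-factor of $\Theta$, which up to $1+O(\cdot)$ has the same expansion with $\rho(v)$ replaced by the same frobenian weights. Expanding formally, the quotient is
\[
1+O\Bigl(\sum_{\text{pairs}}q_v^{-(\text{sum of two exponents})}\Bigr)+O\Bigl(\sum_{j>p^e}q_v^{-\Real(js-\sigma_j)}\Bigr).
\]
Real parts of single exponents $js-\sigma_j$ at $s=a(G)$ are $j(a(G)-z_j(G))+1\ge 1$, with equality only at the dominant $j$ (Lemma~\ref{LocationofPoleComputation}), and the $y_j$ terms satisfy $y_j\le z_j$. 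So any product of two terms has real exponent $\ge 2-2\epsilon'$ near $\Real(s)=a(G)$, and converges absolutely over $v$. This gives holomorphy of $D$ on $\Real(s)\ge a(G)-\epsilon$ after shrinking $\epsilon$.

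\textbf{Step 3, the main obstacle: the infinite product over $j$ in $\Theta$.} One must check that $\prod_{j}\prod_\alpha\zeta_{k(\mu_\alpha)}(js-\sigma_j)^{d}$ itself converges and is meromorphic near $\Real(s)=a(G)$. By Lemma~\ref{LocationofPoleComputation}(3), for $j>p^e$ we have $\Real(js-\sigma_j)-1\ge j\varepsilon_j(G)-O(j\epsilon)$. I would first try to show, using the explicit formula in the proof of part (3), that $j\varepsilon_j(G)\gg j/p^e$ for large $j$. Then $\zeta(js-\sigma_j)-1=O(q^{-cj})$ and the tail product converges uniformly, with the tail of the local sums in Lemma~\ref{asympoticofTriviallocalft} handled identically. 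With this uniform control, the rearrangement in Step 2 into $D\cdot\Theta$ is justified, first for $\Real(s)\gg1$ and then by analytic continuation.
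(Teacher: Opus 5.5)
Your proposal is correct and follows essentially the same route as the paper. Both arguments read off the local factor at $x=\mathbf 1$ from Lemma~\ref{LocalFT}, rewrite the tame weight as $|\Hom(\F_{q_v}^*,G_t)|=\sum_{\alpha}r(\alpha)\phi(\alpha)\mathbf 1_{q_v\equiv 1 \bmod \alpha}$, and identify each resulting Euler product with $\zeta_{k(\mu_\alpha)}^{d(k,\alpha)}$ via the split-completely criterion and \eqref{dedekindofL}. The second-order and tail terms are then absorbed into a holomorphic $D$.

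Your separate treatment of the places in $S$ and your check that the infinite product over $j$ in $\Theta$ converges supply details the paper leaves implicit in its $\approx$ notation and Lemma~\ref{asympoticofTriviallocalft}. The bound you planned to prove does hold: $ja(G)-\sigma_j(G)-1\geq j/p^e-1$ for all $j$.
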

\textit{Proof.}
Since $x = \mathbf{1} $ is trivial, Lemma \ref{LocalFT} implies that
    \begin{align*}
        \prod_v \widehat{f}_{ v} \left(\mathbf{1}_v ; s\right)
        =&\prod_v \bigg(1 + \frac{|\Hom (\F_{q_v}^*, G_t)| - 1 }{q_v^{s}} + |\Hom (\F_{q_v}^*, G_t)| \\&\times\sum^{\infty}_{\substack{j=2\\v_p(j-1)\leq e-1}} \frac{\left(q_v^{\sum^e_{m=1+v_p(j-1)}r_m(G)}-1\right)\left(q_v^{\sum_{m=1}^er_m(G)\left(j-1-\left\lceil\frac{j-1}{p^m}\right\rceil\right)}\vphantom{\prod_v} \right)}{q_v^{js}}\bigg)  \\&\approx_{a(G)-\epsilon}\prod_v\left(1+ \frac{|\Hom (\F_{q_v}^*, G_t)|  }{q_v^{s}}\right)\prod_v\left(1- \frac{ 1 }{q_v^{s}}\right)\\&\times \prod^{\infty}_{\substack{j=2\\v_p(j-1)\leq e-1}} \prod_v\left(1+ \frac{|\Hom (\F_{q_v}^*, G_t)|  }{q_v^{js- \sum_{m=1}^er_m(G)\left(j- 1- \left\lfloor\frac{j-1}{p^m}\right\rfloor\right)}}\right)
        \\ &\times\prod^{\infty}_{\substack{j=2\\v_p(j-1)\leq e-1}} \prod_v\left(1- \frac{|\Hom (\F_{q_v}^*, G_t)|  }{q_v^{js- \sum_{m=1}^er_m(G)\left(j-1-\left\lceil\frac{j-1}{p^m}\right\rceil\right)}}\right).
\end{align*}

where the last equality relies on Lemma \ref{asympoticofTriviallocalft}. We also observe that after applying the change of variables, it suffices to deal with the analytical behavior of the Euler product 
$\prod_v\left(1+ \frac{|\Hom (\F_{q_v}^*, G_t)|  }{q_v^{s}}\right)$.  

Since $\Bbb F_{q_v}^*$ is cyclic, a homomorphism $F_{q_v}^\times\to G_t$
is determined by $g\in G_t$ with $g^{q_v-1}=1$, i.e. $\operatorname{ord}(g)\mid(q_v-1)$.
Each cyclic subgroup of order $\alpha$ contributes $\phi(\alpha)$ such elements,
and there are $r(\alpha)$ of them. Hence
\[
|\mathrm{Hom}(\Bbb F_{q_v}^\times,G_t)|
=\sum_{\alpha\mid|G_t|}r(\alpha)\varphi(\alpha)\mathbf1_{\{q_v\equiv1\pmod\alpha\}}.
\]
Therefore,
\begin{align*}
    \prod_v\left(1+ \frac{|\Hom (\F_{q_v}^*, G_t)|  }{q_v^{s}}\right)  \approx_{\frac{1}{2} }\prod_{\alpha \mid |G_t|}\prod_{\substack{v\in \Omega_k\\q_v \equiv 1 \bmod{\alpha} }}\left(1+ \frac{r(\alpha)\phi(\alpha)}{q_v^{s}}\right).
\end{align*}
For $\alpha \mid |G_t|$, consider the constant extension $ k(\mu_\alpha)/k$. We have that $v \in \Omega_k$ splits completely if and only if $q_v \equiv 1 \bmod{\alpha}$, as $\operatorname{Frob}_v(\mu_\alpha) = \mu_\alpha^{q_v}$.
Finally, by (\ref{dedekindofL}), we have that
\begin{align*}
     \prod_{\alpha \mid |G_t|}\prod_{\substack{v\in \Omega_k\\q_v \equiv 1 \bmod{\alpha} }}\left(1+ \frac{r(\alpha)\phi(\alpha)}{q_v^{s}}\right) & \approx_\frac{1}{2} \prod_{\alpha \mid |G_t|} \prod_{\substack{v\in\Omega_k\\ \text{ splits completely }}}\left(1+\frac{[k(\mu_\alpha):k]}{q_v^s}\right)^{d(k,\alpha)}\\& \approx_{\frac{1}{2}} \prod_{\alpha \mid |G_t|} \zeta_{k(\mu_\alpha)}(s)^{d(k,\alpha)}.\qquad\qquad\qquad\qquad\qquad\qquad\qquad\qed
\end{align*} 
Let $x = (x_t,x_w) \in \mathcal{O}_S^* \otimes (G_t\times G_w)^{\wedge}$ be a non-trivial character, we analyze its global Fourier transform $\widehat{f}_{W,G}(x,s)$ using the theory built in \S2.  The following results will help us to determine the shape of the Euler product of $\widehat{f}_{W,G}(x,s)$.

\begin{lemma}\label{lem: generalized-ABdul-Pth-valuation}
Let $x \in k^{*p^{t}}$ with $x \notin k^{*p^{t+1}}$. Then the $v$-adic expansion of $x$ in $k_v$ has the form
\[
x = a_{0,v}+a_{p^t,v}\pi_v^{p^t} +\cdots,
\]
where $a_{p^t,v}$ is non-zero for all but finitely many places $v\in \Omega_k$.
\end{lemma}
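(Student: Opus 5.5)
Write $x = y^{p^t}$ with $y \in k^*$, so that $y \notin k^{*p}$. The plan is to reduce to the case $t=0$: if the $v$-adic expansion of $y$ is $y = b_{0,v} + b_{1,v}\pi_v + \cdots$, then in characteristic $p$ Frobenius is additive, so
\[
x = y^{p^t} = b_{0,v}^{p^t} + b_{1,v}^{p^t}\pi_v^{p^t} + \cdots .
\]
Here we use that the coefficients may be taken in the residue field, identified with the constant subfield $\F_{q_v}\subset \mathcal{O}_v$ via Teichmüller lifts, which is stable under Frobenius. Hence $a_{p^t,v} = b_{1,v}^{p^t}$, and it suffices to show $b_{1,v}\neq 0$ for all but finitely many $v$. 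After discarding the finitely many places where $v(y)\neq 0$, $y$ is a unit, and we may work in $\mathcal{O}_v$.

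The key step: $b_{1,v} = 0$ means $y \equiv b_{0,v} \pmod{\mathfrak m_v^2}$ with $b_{0,v}\in\F_{q_v}$, that is, $y - c$ vanishes to order $\ge 2$ at $v$ for the constant $c$ reducing to $y(v)$. Equivalently, the differential $dy$ vanishes at $v$. Indeed, writing $y = \sum b_{i,v}\pi_v^i$ with constant coefficients $b_{i,v}$ (constants have zero differential over the perfect field $\F_q$), we get $dy = (b_{1,v} + 2b_{2,v}\pi_v + \cdots)\, d\pi_v$. Since $d\pi_v$ is a local generator of the differentials at $v$, the local coefficient at $v$ is $b_{1,v}$ up to higher-order terms. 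So $b_{1,v}=0$ iff $v$ is a zero of $dy$ (away from the finitely many places where $y$ is not regular).

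Now $y \notin k^{*p}$ and $k$ is separably generated over the perfect field $\F_q$ of transcendence degree one, so $k^p$ is exactly the kernel of $d: k \to \Omega_{k/\F_q}$. It follows that $dy \neq 0$ is a nonzero meromorphic differential on the curve, which has only finitely many zeros and poles. Therefore $b_{1,v}\neq 0$, and hence $a_{p^t,v}\neq 0$, for all but finitely many $v$.

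The main obstacle is making rigorous the identification of the coefficient $b_{1,v}$ with the value of $dy/d\pi_v$. This requires being careful about the choice of coefficient system: the expansion must use Teichmüller or constant-field representatives, for which $d$ kills coefficients. Proposition \ref{quotientofhigherunitgroup} implicitly uses the same convention. It also requires handling residue fields larger than $\F_q$, which is fine because $\F_{q_v}$ is perfect, so its elements are $p$-th powers and have zero differential. I would also double-check the fact $\ker d = k^p$ via a separating transcendental element $T$: writing $k = k^p(T)$ with basis $1,T,\dots,T^{p-1}$ over $k^p$ gives $d(\sum c_iT^i) = \sum i c_i T^{i-1}dT$, which is zero only if $c_i = 0$ for all $i \geq 1$.
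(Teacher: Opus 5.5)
Your proof is correct. The reduction is the same as the paper's: write $x=y^{p^t}$ with $y\notin k^{*p}$ and pass to the case $t=0$. The paper leaves the identity $a_{p^t,v}=b_{1,v}^{p^t}$ implicit. Your Frobenius computation with Teichm\"uller (constant-field) coefficients is exactly what justifies it, and it also shows why the coefficients of $\pi_v^j$ with $p^t\nmid j$ vanish.

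The difference is the case $t=0$. The paper does not prove it but cites \cite[Coro.~6.15]{alfaraj2025maninsconjectureequivariantcompactifications}. You prove it directly: at places where $y$ is integral, $b_{1,v}=0$ exactly when $v$ is a zero of $dy$, and $dy\neq 0$ because $\ker d=k^p$. The citation gives brevity. Your argument is self-contained and gives an explicit exceptional set, contained in the supports of $\operatorname{div}(y)$ and $\operatorname{div}(dy)$.

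The step you flag as the main obstacle closes cleanly:
\begin{itemize}
\item Take $\pi_v\in k$. This is harmless, because whether the first Teichm\"uller coefficient vanishes does not depend on the choice of uniformizer (replacing $\pi_v$ by $u\pi_v$ multiplies it by a unit of $\F_{q_v}$).
\item Termwise differentiation on $\F_{q_v}((\pi_v))$ is a derivation that kills $\F_{q_v}$ and sends $\pi_v\mapsto 1$.
\item $\pi_v\notin k^p$, since $v(\pi_v)=1$, so $\pi_v$ is separating. Derivations extend uniquely along the finite separable extension $k/\F_q(\pi_v)$, so the restriction of termwise differentiation to $k$ equals $d/d\pi_v$.
\item Hence $v(dy)=v\bigl(dy/d\pi_v\bigr)$ is read off from $b_{1,v}+2b_{2,v}\pi_v+\cdots$, as you claimed.
\end{itemize}
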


\begin{proof}
The case $t=0$ is \cite[Coro.~6.15]{alfaraj2025maninsconjectureequivariantcompactifications}. For $t \geq 1$, write $x = y^{p^t}$ with $y \notin k^{*p}$. Applying \cite[Coro.~6.15]{alfaraj2025maninsconjectureequivariantcompactifications} to $y$ gives the desired non-vanishing for all but finitely many places.
\end{proof}
\begin{proposition}\label{Prop:hom_is_frob}
For $p \nmid |G_t|$, the function
\begin{equation}\label{thehomfrob}
  \rho_x(v) =  \begin{cases}
    |\Hom(\F_{q_v}^*,G_t)|,& x_v \in k_v^{*|G_t|}, \\0,&\text{otherwise}.
    \end{cases}
\end{equation}
is $S$-frobenian.
\end{proposition}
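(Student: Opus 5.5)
We take $x$ to be (a representative in $k^*$ of) the tame component $x_t \in \mathcal{O}_S^*\otimes G_t^{\wedge}$. Set $n:=|G_t|$, so $p\nmid n$, and let $L:=k(\mu_n, x^{1/n})$. We will show that $\rho_x$ is $S$-frobenian with respect to $\Gamma=\Gal(L/k)$. Since $p\nmid n$, the polynomial $X^n-x$ is separable, so $L/k$ is a finite Galois (Kummer-type) extension. After enlarging $S$ by finitely many places, we may assume $S$ contains every place ramified in $L/k$. Such places lie only among those where $x$ is not a unit, and $x\in\mathcal{O}_S^*$ guarantees there are finitely many. Note that $k(\mu_n)/k$ is constant, hence unramified everywhere, so this causes no difficulty.

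Next we translate each condition into Frobenius data for $v\notin S$. First, $|\Hom(\F_{q_v}^*,G_t)|=\sum_{\alpha\mid n}r(\alpha)\phi(\alpha)\mathbf 1_{\{q_v\equiv 1 \bmod \alpha\}}$, as computed in the proof of Proposition~\ref{prop_trivial_char_decomp}. The condition $q_v\equiv1\bmod\alpha$ says exactly that $\Frob_v$ acts trivially on $\mu_\alpha$, which depends only on the image of $\Frob_v$ in $\Gal(k(\mu_n)/k)$. Second, $x$ is a unit at $v\notin S$, and $p\nmid n$, so by Hensel's lemma (as in the proof of Lemma~\ref{trivialintersection}, where $\mathcal O_{v,1}^*$ is $n$-divisible) we have $x_v\in k_v^{*n}$ if and only if the reduction $\bar x_v$ is an $n$-th power in $\F_v^*$. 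Equivalently, $X^n-x$ has a root in $k_v$, which holds if and only if $\Frob_v$ fixes some root of $X^n-x$ in $L$. This is the argument of Example~\ref{ex:frobenian-polynomial}.

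Finally we assemble the class function. Define $\varphi:\Gamma\to\CC$ by
\[
\varphi(\gamma)=\Bigl(\sum_{\alpha\mid n} r(\alpha)\phi(\alpha)\,\mathbf 1_{\{\gamma|_{\mu_\alpha}=\mathrm{id}\}}\Bigr)\cdot \mathbf 1_{\{\gamma \text{ fixes a root of } X^n-x\}}.
\]
Each factor is invariant under conjugation. The first depends only on the image of $\gamma$ in the abelian quotient $\Gal(k(\mu_n)/k)$. The second is preserved because conjugation permutes the roots. Hence $\varphi$ is a class function, and $\rho_x(v)=\varphi(\Frob_v)$ for every $v\notin S$, which proves $S$-frobenian. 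The case where $x$ has a nontrivial wild component is not affected, since $\rho_x$ as defined only involves the $|G_t|$-th power condition.

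The main obstacle is making precise what "$x_v\in k_v^{*|G_t|}$" means for $x\in\mathcal O_S^*\otimes G_t^{\wedge}$ rather than for an element of $k^*$. We would use a splitting $G_t^{\wedge}\cong\prod_i \Z/n_i$, which reduces the statement to finitely many conditions $x_i\in k_v^{*n_i}$. Each of these is frobenian for the field $k(\mu_{n_i},x_i^{1/n_i})$ by the argument above. We then take $L$ to be the compositum of these fields. A product of frobenian indicator functions is frobenian on the compositum, since each factor pulls back to a class function on $\Gal(L/k)$.
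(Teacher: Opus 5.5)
Your proof is correct. The paper gives no argument of its own beyond citing the number-field result of Frei--Loughran--Newton (their Lemma~3.13 with $\mathcal{A}=\{1\}$), and your construction carries out that Kummer-theoretic argument directly over $k$: you take the Kummer extension $L=k(\mu_n,x^{1/n})$, or the compositum over a cyclic decomposition of $G_t^{\wedge}$, and write down an explicit class function on $\Gal(L/k)$. Over $k$ the ramification bookkeeping is simpler, because $p\nmid n$ and $k(\mu_n)/k$ is a constant extension.

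One small correction: do not enlarge $S$, since the claim is $S$-frobenian for the given $S$. You also have no need to, because $x\in\mathcal{O}_S^*$ and $p\nmid n$ already force every place ramified in $L/k$ to lie in $S$.
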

\begin{proof}
As we are in the tame case, the proof is analogous to the number field case, which is a special case in \cite[Lem.~3.13]{Frei_2022} by taking $\mathcal{A}= \{1\}$. 
\end{proof}

\begin{lemma}\label{non-trivial_char_ana_cont}
     For $x \in \mathcal{O}_S^*\otimes G^{\wedge}$, there exists a constant extension $M_x/k$ such that $\widehat{f}_{W,G}(x,s) $ equals
     \begin{equation}\label{Decomp_of_non_tri_char}
          P(x,s)\zeta_k(s)^{-1}\zeta_{M,x}(s)^{m(\rho_x)}\times \prod^{h}_{\substack{j=2\\v_p(j-1)\leq e-1}} \zeta_{M,x}\left({js- \sum_{m=1}^er_m(G)\left(j- 1- \left\lfloor\frac{j-1}{p^m}\right\rfloor\right)}\right)^{m(\rho_x)} ,
     \end{equation}
     where $2 \leq h \leq p^e$, and $P(x,s)$  admits analytic continuation for $\Real s \geq 1$, except for possible algebraic singularities along $\Real (s) =1 $.
Moreover, the real part of the order of these singularities of $P(x,s)$   on $\Real (s) =1 $ is strictly less than $m(\rho)$.
\end{lemma}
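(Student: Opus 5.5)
We will work with the Euler product $\widehat{f}_{W,G}(x;s)=\prod_v \widehat{f}_{W_v,G}(x_v;s)$ from \eqref{globalproductlocal}. The finitely many factors with $v\in S$ are entire in $s$ and absolutely convergent by Proposition~\ref{absconvergence:delta}, so they are absorbed into $P(x,s)$. For $v\notin S$ we use Lemma~\ref{LocalFT}. Write $x=(x_t,x_w)$. At each $v\notin S$ the behaviour of $\widehat f_v(x_v;s)$ is governed by two data.

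\begin{enumerate}
\item The \emph{tame datum}: whether $x_{t,v}$ is trivial in $\OO\otimes G_t^{\wedge}$, i.e.\ whether $x_v\in k_v^{*|G_t|}$. This accounts for the factor $|\Hom(\F_{q_v}^*,G_t)|$ versus $0$ in the coefficient of $q_v^{-s}$, as in \eqref{purelytameFT}.
\item The \emph{wild level}: the index $h_v$ with $x_{w,v}\in\mathcal O_{v,h_v}^*\otimes G^\wedge$ but $x_{w,v}\notin\mathcal O_{v,h_v+1}^*\otimes G^\wedge$. This determines where the truncated sum in \eqref{computelocalFourierfordifferentchar} stops.
\end{enumerate}

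The first key step is to show that $h_v$ is constant, equal to some $h$, for all but finitely many $v$. Since $x\in\mathcal O_S^*\otimes G^\wedge$ is a finite combination of global $S$-units, we write the $p$-part $x_w$ through elements $u\in\mathcal O_S^*$ with $u\in k^{*p^t}\setminus k^{*p^{t+1}}$. By Lemma~\ref{lem: generalized-ABdul-Pth-valuation}, for almost all $v$ the expansion of $u/a_{0,v}$ has first nonconstant term exactly at $\pi_v^{p^t}$ with nonzero coefficient. Proposition~\ref{quotientofhigherunitgroup} then identifies the level of its image in $\OOone/(\OOone)^{p^m}$, and this level is independent of $v$. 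Taking the minimum over the finitely many generators, and using Lemma~\ref{trivialintersection} to handle the case where $x_w$ is trivial (then the level is effectively $\infty$, and we truncate at $p^e$ using Lemma~\ref{asympoticofTriviallocalft}), we obtain a uniform $h$ with $2\le h\le p^e$. The case where $x_v\notin\OOone\otimes G^\wedge$ gives factor $1-q_v^{-s}$, which is $\zeta_k^{-1}$-like and contributes to $m(\rho_x)=0$ in that case. We expect this uniformity step to be the main obstacle, because it requires carefully matching the pairing $\langle\psi_v,x_v\rangle$ with the filtration from Proposition~\ref{quotientofhigherunitgroup}.

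Next, for almost all $v$ we rewrite each local factor as
\[
\bigl(1-q_v^{-s}\bigr)\Bigl(1+\rho_x(v)q_v^{-s}\Bigr)\prod_{j}\Bigl(1+\rho_x(v)q_v^{-(js-\sigma_j)}\Bigr)\cdot(1+E_v(s)),
\]
where $\sigma_j=\sum_m r_m(j-1-\lfloor (j-1)/p^m\rfloor)$ and $\rho_x$ is the function of Proposition~\ref{Prop:hom_is_frob}, which is $S$-frobenian. The error terms $E_v$ come from three sources: the $y_j$-exponents (the ceiling terms), the final negative term in \eqref{computelocalFourierfordifferentchar}, and cross terms. We expect these to be $O(q_v^{-1-\eta})$ for $\Real s\ge1$ on the relevant shifted lines, by the comparisons in Lemma~\ref{LocationofPoleComputation}(1)--(3), though this has not been checked. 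Their product $\prod_v(1+E_v(s))$ is therefore absolutely convergent and holomorphic, and it goes into $P(x,s)$. The product $\prod_v(1-q_v^{-s})$ is $\zeta_k(s)^{-1}$ up to a finite correction.

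Finally, we apply Proposition~\ref{Prop:Frob_func_into_zeta} to $\rho_x$. Since $\rho_x\ge 0$ is real, and $|\rho_x(v)|\le|G_t|<q_v$ after enlarging $S$, we get $\prod_{v\notin S}(1+\rho_x(v)q_v^{-s})=\zeta_{M_x}(s)^{m(\rho_x)}P_x(s)$ for a constant extension $M_x/k$. Here $P_x$ is continuable to $\Real s>1/2$ except for algebraic singularities on $\Real s=1$ whose orders have real part $<m(\rho_x)$. Substituting $s\mapsto js-\sigma_j$ gives the same decomposition for each $j\le h$, with the same $M_x$. The resulting $P_x(js-\sigma_j)$ factors are absorbed into $P(x,s)$; their singularities on $\Real s=1$ inherit the order bound. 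Collecting all the pieces yields \eqref{Decomp_of_non_tri_char}.
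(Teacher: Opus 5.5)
Your proposal is essentially the paper's proof. Both obtain a uniform wild level $h$ from Lemma~\ref{lem: generalized-ABdul-Pth-valuation}, rewrite the local transforms as $\zeta_k(s)^{-1}$ times Euler products attached to the frobenian function $\rho_x$ in the shifted variables $js-\sigma_j$, and then apply Proposition~\ref{Prop:Frob_func_into_zeta} followed by a change of variables. Your version is more careful than the paper's display: setting $\rho_x(v)=0$ where $x_{t,v}$ is non-trivial treats all places uniformly. The error-term bound you left unchecked does hold on $\Real(s)\geq a(G)-\epsilon$, which is the right region rather than $\Real(s)\geq 1$ when $G_w$ is not cyclic, because $y_j(G)<a(G)$ for $2\leq j\leq p^e+1$ by Lemma~\ref{LocationofPoleComputation} and Lemma~\ref{lagemanncalcualtion}.
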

\begin{proof}
Let $x = (x_t,x_w) \in \mathcal{O}_S^* \otimes (G_t\times G_w)^{\wedge}$. By Lemma~\ref{lem: generalized-ABdul-Pth-valuation}, there exist $2\leq h \leq p^e$ such that
\begin{align*}
 \widehat{f}(x;s) &\approx_{a(G)}\prod_{v:x_{t,v} \in k_v^{*|G_t|}}\left(1-\frac{1}{q_v^s}\right)\prod_{v:x_{t,v} \notin k_v^{*|G_t|}}\left(1+ \frac{|\Hom (\F_{q_v}^*, G_t)|  }{q_v^{s}}\right)\prod_{v:x_{t,v} \in k_v^{*|G_t|}}\left(1- \frac{ 1 }{q_v^{s}}\right)\\&\times \prod^{h}_{\substack{j=2\\v_p(j-1)\leq e-1}} \prod_{v:x_{t,v} \in k_v^{*|G_t|}}\left(1+ \frac{|\Hom (\F_{q_v}^*, G_t)|  }{q_v^{js- \sum_{m=1}^er_m(G)\left(j- 1- \left\lfloor\frac{j-1}{p^m}\right\rfloor\right)}}\right)
 \\&\approx_{a(G)}\zeta_k(s)^{-1}\prod_{v:x_{t,v} \in k_v^{*|G_t|}}\left(1+ \frac{|\Hom (\F_{q_v}^*, G_t)|  }{q_v^{s}}\right)\\&\times \prod^{h}_{\substack{j=2\\v_p(j-1)\leq e-1}} \prod_{v:x_{t,v} \in k_v^{*|G_t|}}\left(1+ \frac{|\Hom (\F_{q_v}^*, G_t)|  }{q_v^{js- \sum_{m=1}^er_m(G)\left(j- 1- \left\lfloor\frac{j-1}{p^m}\right\rfloor\right)}}\right).
\end{align*}

Applying Propsition~\ref{Prop:Frob_func_into_zeta} yields
\begin{equation}\label{one-time}
     \prod_{v:x_{t,v} \in k_v^{*|G_t|}}\left(1+ \frac{|\Hom (\F_{q_v}^*, G_t)|  }{q_v^{s}}\right) = P(x,s)\zeta_{M_x}(s)^{m(\rho_x)},
\end{equation}
as the product above is associated to the frobenian function \eqref{thehomfrob}. 
The assertion now follows by applying the change of variables to \eqref{one-time}.
\end{proof}
\subsection{The asymptotic formula}
So far, we have related the conductor series of $G-$extensions to global Fourier transforms via Proposition \ref{PossionS:the prop}. We are going to apply Tauberian theorem to our Dirichlet series, therefore we need to investigate which terms in \eqref{removesubgroups} are contribute towards the leading term in the asymptotic formula.

\begin{proposition}\label{which_cha_contri}
Let $x \in \mathcal{O}_{S}^*\otimes G^{\wedge}$. Denote by $\omega(k,G,x)$ the order of the leading singularity of $\widehat{f}_{W,G}(x,s)$ at $\Real(s) = a(G)$, then 
\begin{enumerate}
    \item $\omega(k,G,x) < \omega (k,G,\mathbf{1})$ for $x$ not trivial in $ \mathcal{O}_{S}^* \otimes G^{\wedge}.$
     \item Let $H\lneq G$ be a subgroup. Then either $a(H)<a(G)$, or $a(H)=a(G)$ and every singularity of $\widetilde F_{H,W}(s)$ on the line $\Real(s)=a(G)$ has order strictly smaller than $b(k,G)$.
\end{enumerate}
\end{proposition}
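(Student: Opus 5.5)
We compare the factorisations of Proposition~\ref{prop_trivial_char_decomp} and Lemma~\ref{non-trivial_char_ana_cont} on the line $\Real(s)=a(G)$, reading off the order of the leading singularity from the zeta factors whose arguments reach the pole line $\Real=1$ exactly when $\Real(s)=a(G)$.

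\textbf{Computing $\omega(k,G,\mathbf 1)$.} In $\Theta(k,G,W;s)$, the factor $\zeta_{k(\mu_\alpha)}(js-\sigma_j)$ with $\sigma_j=\sum_m r_m(j-1-\lfloor (j-1)/p^m\rfloor)$ has its poles on $\Real(s)=z_j(G)$. By Lemma~\ref{LocationofPoleComputation}\textit{(3)}, only $j=p^e$ gives $z_j=a(G)$; all $j>p^e$ lie strictly to the left. When $G_w$ is cyclic, part \textit{(4)} gives $a(G)=1$, so the factors $\zeta_{k(\mu_\alpha)}(s)^{d(k,\alpha)}$ and $\zeta_k(s)^{-1}$ also contribute. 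The denominators $\zeta(js-\sigma'_j)^{-1}$ (with ceilings) lie on $\Real(s)=y_j\le z_j$; by part \textit{(2)} they are strictly to the left whenever $p^f\mid j$. For the remaining $j$, their zeros are cancelled by the corresponding numerators, or the possible coincident points can be handled directly. At a rightmost point of highest order, this gives the pole order $b(k,G)$, with the two cases as in \eqref{bG}: in the cyclic case, $p^e\sum_\alpha d(k,\alpha)-1$, counting the $j=p^e$ factors, the $s$-factors, and $\zeta_k^{-1}$.

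\textbf{Part (1).} For nontrivial $x$, Lemma~\ref{non-trivial_char_ana_cont} replaces every $\prod_\alpha\zeta_{k(\mu_\alpha)}^{d(k,\alpha)}$ by $\zeta_{M_x}^{m(\rho_x)}$ times $P(x,s)$, whose singularities have real order $<m(\rho_x)$. Moreover, the product over $j$ stops at some $h\le p^e$. There are two mechanisms:

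\begin{itemize}
\item[(a)] If $x_w\neq1$, then Lemma~\ref{lem: generalized-ABdul-Pth-valuation} gives $x_v\notin\mathcal O_{v,h+1}^*\otimes G^\wedge$ for almost all $v$ with $h<p^e$. This truncation removes the $j=p^e$ factor, which is the only one reaching $\Real(s)=a(G)$ in the non-cyclic case, so the order drops strictly.
\item[(b)] If $x_w=1$ but $x_t\neq1$, then $\rho_x$ is supported on $\{v: x_{t,v}\in k_v^{*|G_t|}\}$. This set has density $<1$ on each Frobenius class, by Chebotarev together with the Grunwald--Wang-type fact that a non-$n$-th power is not locally an $n$-th power almost everywhere. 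Hence $m(\rho_x)<m(\rho_{\mathbf 1})=\sum_\alpha d(k,\alpha)$, and at each pole of $\zeta_{M_x}$ the order $m(\rho_x)$ times the number of factors is strictly smaller.
\end{itemize}

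The main obstacle is making this comparison uniform over all singular points on the line $\Real(s)=a(G)$ inside $D$, and not only at $s=a(G)$. Since $\zeta_{M_x}$ and the $\zeta_{k(\mu_\alpha)}$ have poles at different roots of unity, I would bound the order at each such point by $m(\rho_x)\cdot(\text{number of factors})$. Lemma~\ref{lem_mean_frob_real_nonneg}\textit{(1)} bounds the secondary orders by $m(\rho_x)$, and I would compare this against $b(k,G)$.

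\textbf{Part (2).} Apply the same analysis with $H$ in place of $G$, via Proposition~\ref{PossionS:the prop} for $\widetilde F_{H,W}$, so the leading order is at most $b(k,H)$ at $\Real(s)=a(H)$.

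\begin{itemize}
\item If $H_w\subsetneq G_w$, then Lemma~\ref{LocationofPoleComputation}\textit{(5)} gives $a(H)<a(G)$, unless both are cyclic with $a=1$.
\item If $H_w=G_w$, then $a(H)=a(G)$, and $H_t\subsetneq G_t$ forces $r_H(\alpha)\le r_G(\alpha)$, with strict inequality for some $\alpha$. Hence $\sum_\alpha d_H(k,\alpha)<\sum_\alpha d_G(k,\alpha)$, and so $b(k,H)<b(k,G)$.
\item In the exceptional cyclic case, $H_w\subsetneq G_w$ with both cyclic, we have $a(H)=a(G)=1$. Here $p^{e_H}<p^{e}$ gives the strict drop in the coefficient of $b$.
\end{itemize}

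Checking that these pole orders are maximal simultaneously at every point of $D$ again reduces to the per-point bound from part (1).
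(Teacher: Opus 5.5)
\textbf{Part (2) has a genuine gap.} Your first bullet asserts that $H_w\subsetneq G_w$ gives $a(H)<a(G)$ via Lemma~\ref{LocationofPoleComputation}\textit{(5)}, except when both are cyclic. This fails when $G_w$ is non-cyclic.

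Read with $e=e(G)$ fixed, that lemma concerns $z_{p^{e(G)}}(H)$, not $a(H)=z_{p^{e(H)}}(H)$. Read as $a(H)<a(G)$, it is false. When $e(H)<e(G)$ the term $p^{-e(H)}$ in $a(H)$ grows and can exactly compensate. For instance, $G_w=C_{p^2}\times C_p$ and $H_w=C_p\times C_p$ both give $a=2-1/p$. So your case split misses exactly the situation $G_w$ non-cyclic, $H_w\subsetneq G_w$, $a(H)=a(G)$. None of your three bullets bounds $b(k,H)$ there.

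The missing idea, which is how the paper argues, has three steps:
\begin{itemize}
\item Write $A_w\cong\prod_{i=1}^{R_A}C_{p^{e_i(A)}}$ with $e_1\ge e_2\ge\cdots$, so that $a(A)=R_A-\sum_{i\ge2}p^{-e_i(A)}$.
\item Prove $e_i(H)\le e_i(G)$ by comparing $\dim_{\F_p}(p^{t-1}H_w)[p]\le\dim_{\F_p}(p^{t-1}G_w)[p]$. Then $a(G)-a(H)$ is a sum of non-negative terms. So $a(H)=a(G)$ forces $R_H=R_G$ and $e_i(H)=e_i(G)$ for $i\ge2$, but still allows $e_1(H)<e_1(G)$.
\item Observe that for non-cyclic $A_w$ the invariant $f$ equals $e_2(A)$, so $b(k,A)=p^{e_1(A)-e_2(A)}\sum_\alpha d_A(k,\alpha)$.
\end{itemize}
Then $e_1(H)<e_1(G)$ gives $b(k,H)<b(k,G)$ directly. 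If instead $e_1(H)=e_1(G)$, then $H_w=G_w$, which reduces to your tame comparison.

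\textbf{A secondary error affects your computation of $\omega(k,G,\mathbf 1)$ and case (a) of Part (1).} It is not true that only $j=p^e$ reaches $\Real(s)=a(G)$. Lemma~\ref{LocationofPoleComputation}\textit{(3)} only treats $j>p^e$. By Lemma~\ref{lagemanncalcualtion}, $z_j(G)=a(G)$ for every $2\le j\le p^e$ with $p^f\mid j$. That is $p^{e-f}$ layers in the non-cyclic case, and all $j\le p^e$ in the cyclic case. This is exactly where the factors $p^{e-f}$ and $p^e$ in \eqref{bG} come from.

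Counting as you describe (the $j=p^e$ factors, the $s$-factors and $\zeta_k^{-1}$) gives $2\sum_\alpha d(k,\alpha)-1$ in the cyclic case, not $b(k,G)$. The conclusion of (a) still holds, because truncating at $h<p^e$ removes at least the contributing layer $j=p^e$. The count itself should be taken from Lemma~\ref{structure of Lambda}.

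Apart from this, your case (b) matches the paper's argument for $x_t\neq1$: frobenian mean plus Grunwald--Wang. Your truncation argument for $x_w\neq1$ usefully covers a case the paper's proof of \textit{(1)} leaves implicit.
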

\begin{proof}
According to the definition of the frobenian function \eqref{thehomfrob}, we have $m(\rho_{x_t}) \leq m(\rho_{\mathbf{1}_t})$, with equality if and only if $x_{t,v}$ is a $|G_t|$-th power for all but finitely many places. By Grunwald-Wang \cite[Thm.~9.1.11]{CohoNF}, this happens if and only if $x_t = \mathbf{1}_t.$ Therefore, $\omega(k,G,\mathbf{1}) > \omega(k,G,x) $, for $x_t$ non-trivial. 

We now prove \textit{(2)}. For $H=1$ the series $\widetilde F_{1,W}(s)$ is constant and there is nothing to prove, so assume $H\neq 1$. Write $H=H_w\times H_t$ with $H_w\leq G_w$ and $H_t\leq G_t$. For a finite abelian $p$-group $A_w$, write $A_w\cong C_{p^{e_1(A)}}\times\cdots\times C_{p^{e_{R_A}}(A)}$ with $e_1(A)\geq\cdots\geq e_{R_A}(A)\geq 1$, so that $e_1(A)$ equals the invariant $e$ of $A$; when $A_w$ is trivial we set $R_A=0$ and $e_1(A)=0$. Set $R=R_G$, $r=R_H$ and $e_i(H)=0$ for $r<i\leq R$. For every $t\geq 1$, the number of indices $i$ with $e_i(H)\geq t$ equals $\dim_{\F_p}\left(p^{t-1}H_w\right)[p]$, and $p^{t-1}H_w\subseteq p^{t-1}G_w$ gives $\dim_{\F_p}\left(p^{t-1}H_w\right)[p]\leq\dim_{\F_p}\left(p^{t-1}G_w\right)[p]$; hence $e_i(H)\leq e_i(G)$ for all $i$.

From the definition of $a(\cdot)$, if $A_w$ is non-trivial, $a(A)=p^{-e_1(A)}+\sum_{i=1}^{R_A}\left(1-p^{-e_i(A)}\right)=R_A-\sum_{i=2}^{R_A}p^{-e_i(A)}$, while $a(A)=1$ when $A_w$ is trivial. If $G_w$ and $H_w$ are non-trivial, then
\begin{equation*}
a(G)-a(H)=\sum_{i=r+1}^{R}\left(1-p^{-e_i(G)}\right)+\sum_{i=2}^{r}\left(p^{-e_i(H)}-p^{-e_i(G)}\right),
\end{equation*}
and every summand is non-negative, since $e_i(G)\geq 1$ in the first sum and $e_i(H)\leq e_i(G)$ in the second; as the summands of the first sum are positive, the equality $a(H)=a(G)$ forces $r=R$ and $e_i(H)=e_i(G)$ for $2\leq i\leq R$. If $G_w$ is cyclic, then $H_w$ is cyclic and $a(H)=a(G)=1$. If $G_w$ is not cyclic and $H_w$ is cyclic or trivial, then $a(H)=1<a(G)$ by Lemma~\ref{LocationofPoleComputation} \textit{(4)}, and \textit{(2)} holds in this case.

Suppose from now on that $a(H)=a(G)$. If $\psi_v\notin\Hom(\Gamma_{k_v},H)$ for some $v\in S$, then $\widehat f_{W,H}(\mathbf 1,s)$ vanishes identically and there is nothing to prove; assume otherwise. We show $b(k,H)<b(k,G)$; by Lemma~\ref{structure of Lambda} applied to the finite abelian group $H$, this gives $\omega(k,H,\mathbf 1)\leq b(k,H)<b(k,G)=\omega(k,G,\mathbf 1)$, as required. We use two observations. First, $\sum_{\alpha\mid|A_t|}d_A(k,\alpha)=\sum_{y\in A_t}[k(\mu_{\operatorname{ord}(y)}):k]^{-1}$, where $d_A(k,\alpha)$ denotes the quantity $d(k,\alpha)$ formed with $A_t$ in place of $G_t$; this sum is at least $1$, and it increases strictly when $A_t$ is replaced by a group properly containing it. Second, for non-cyclic $A_w$ and $f'\geq 0$, the group $p^{f'}A_w$ is cyclic if and only if at most one of the $e_i(A)$ exceeds $f'$, if and only if $f'\geq e_2(A)$; hence the invariant $f$ of $A$ equals $e_2(A)$, and the difference $e-f$ for $A$ equals $e_1(A)-e_2(A)$.

If $G_w$ is cyclic, then $b(k,G)=p^{e_1(G)}\left(\sum_{\alpha}d(k,\alpha)\right)-1$ and $b(k,H)=p^{e_1(H)}\left(\sum_{\alpha}d_H(k,\alpha)\right)-1$ by \eqref{bG} applied to $G$ and to $H$. Either $e_1(H)<e_1(G)$, and then $p^{e_1(H)}\sum_{\alpha}d_H(k,\alpha)\leq p^{e_1(G)-1}\sum_{\alpha}d(k,\alpha)<p^{e_1(G)}\sum_{\alpha}d(k,\alpha)$; or $e_1(H)=e_1(G)$, and then $H_w=G_w$, as $H_w$ is a subgroup of the cyclic group $G_w$ of the same order, so $H_t\lneq G_t$ and the first observation gives $\sum_{\alpha}d_H(k,\alpha)<\sum_{\alpha}d(k,\alpha)$. In both cases $b(k,H)<b(k,G)$.

If $G_w$ is not cyclic, then $a(H)=a(G)>1$ forces $H_w$ to be non-cyclic by Lemma~\ref{LocationofPoleComputation} \textit{(4)} applied to $H$, and the equality case above gives $r=R$, $e_i(H)=e_i(G)$ for $i\geq 2$ and $e_1(H)\leq e_1(G)$. By the second observation, $b(k,G)=p^{e_1(G)-e_2(G)}\sum_{\alpha}d(k,\alpha)$ and $b(k,H)=p^{e_1(H)-e_2(H)}\sum_{\alpha}d_H(k,\alpha)$, with $e_2(H)=e_2(G)$. If $e_1(H)<e_1(G)$, then $b(k,H)\leq p^{e_1(G)-e_2(G)-1}\sum_{\alpha}d(k,\alpha)<b(k,G)$. If $e_1(H)=e_1(G)$, then $e_i(H)=e_i(G)$ for all $i$, so $H_w=G_w$, being a subgroup of the same order; hence $H_t\lneq G_t$ and the first observation gives $b(k,H)<b(k,G)$.
\end{proof}

In the asymptotic formula, the order of the singularity appears in the exponent of the logarithmic factor, but only the real part of the order matters for the growth rate. Therefore, by Proposition \ref{which_cha_contri}, only the trivial character will contribute towards the leading term. Henceforth, we make a detailed analysis for the trivial character here. We require the following result from \cite[Prop.~6.4]{Lagemann_2015}.
\begin{lemma}\label{lagemanncalcualtion}
    For any integer $2\leq j \leq p^e$, let
    \begin{equation*}
        \epsilon(j) = a(G) - \frac{1+ \sum_{m=1}^er_m(G)\left(j-1-\left\lfloor\frac{j-1}{p^m}\right\rfloor\right)}{j},
    \end{equation*}
    and let $0 \leq f \leq e$ be the minimal integer such that $p^fG_w$ is cyclic. Then we have $\epsilon(j) \geq 0$ with $\epsilon(j) = 0$ if and only if $p^f \mid j$.
\end{lemma}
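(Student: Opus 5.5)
The plan is to reduce everything to an exact closed form for $j\,\epsilon(j)$ and then read off both the sign and the equality case. Throughout I write $G_w\cong C_{p^{e_1}}\times\cdots\times C_{p^{e_R}}$ with $e=e_1\geq e_2\geq\cdots\geq e_R\geq 1$, as in the proof of Proposition~\ref{which_cha_contri}. Then $\sum_m r_m(G)\,g(p^m)=\sum_{i=1}^R g(p^{e_i})$ for any function $g$. If $G_w$ is trivial, the range $2\leq j\leq p^e$ is empty and there is nothing to prove, so I assume $R\geq 1$.

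\emph{Step 1: closed form.} Write $a(G)=p^{-e}+\sum_{i=1}^R(1-p^{-e_i})$ and multiply $\epsilon(j)$ by $j$:
\begin{equation*}
j\,\epsilon(j)=\Bigl(\frac{j}{p^e}-1\Bigr)+\sum_{i=1}^R\Bigl(j-\frac{j}{p^{e_i}}-(j-1)+\Bigl\lfloor\frac{j-1}{p^{e_i}}\Bigr\rfloor\Bigr).
\end{equation*}
For integers $j\geq 1$ and $N\geq 1$ we have $1+\lfloor (j-1)/N\rfloor=\lceil j/N\rceil$. Using this, the $i$-th summand becomes $\lceil j/p^{e_i}\rceil-j/p^{e_i}$.

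\emph{Step 2: cancellation of the top factor.} Take the factor $i=1$, where $e_1=e$. Since $2\leq j\leq p^e$, we have $\lceil j/p^e\rceil=1$, so this summand equals $1-j/p^e$. It cancels the first bracket exactly, leaving
\begin{equation*}
j\,\epsilon(j)=\sum_{i=2}^{R}\Bigl(\Bigl\lceil\frac{j}{p^{e_i}}\Bigr\rceil-\frac{j}{p^{e_i}}\Bigr).
\end{equation*}
This cancellation is the only delicate point of the argument. Without isolating the largest cyclic factor, the negative term $j/p^e-1$ makes the sign look unclear; that is where the hypothesis $j\leq p^e$ is used.

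\emph{Step 3: sign and equality.} Each remaining summand is non-negative, and it vanishes if and only if $p^{e_i}\mid j$. Hence $\epsilon(j)\geq 0$. Moreover, $\epsilon(j)=0$ if and only if $p^{e_i}\mid j$ for all $i\geq 2$, which (since $e_2\geq e_i$ for $i\geq 2$) holds if and only if $p^{e_2}\mid j$ when $R\geq 2$, and holds always when $R=1$.

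It remains to identify this condition with $p^f\mid j$. As observed in the proof of Proposition~\ref{which_cha_contri}, $p^{f'}G_w$ is cyclic exactly when at most one $e_i$ exceeds $f'$, that is, when $f'\geq e_2$. So $f=e_2$ if $R\geq 2$, and $f=0$ if $G_w$ is cyclic. In the cyclic case $p^0\mid j$ holds trivially, matching the empty sum. This gives the stated equivalence.
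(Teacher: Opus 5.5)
Your proof is correct, and it takes a different route from the paper. The paper gives no direct argument. It quotes \cite[Prop.~6.4]{Lagemann_2015} and only checks that Lagemann's formulation, written with the $p^m$-ranks $R_m(G)=\log_p(p^{m-1}G_w:p^mG_w)$, coincides with the one here. That check is a summation-by-parts identity turning $\sum_m r_m(G)\bigl(j-1-\lfloor\frac{j-1}{p^m}\rfloor\bigr)$ into $\sum_m\bigl(\lfloor\frac{j-1}{p^{m-1}}\rfloor-\lfloor\frac{j-1}{p^m}\rfloor\bigr)R_m(G)$.

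You instead work directly with the invariant-factor exponents $e_1\geq\cdots\geq e_R$ and use $1+\lfloor\frac{j-1}{N}\rfloor=\lceil\frac{j}{N}\rceil$. You then observe that for $j\leq p^e$ the $i=1$ term exactly cancels $\frac{j}{p^e}-1$, which leaves $j\,\epsilon(j)=\sum_{i\geq 2}\bigl(\lceil\frac{j}{p^{e_i}}\rceil-\frac{j}{p^{e_i}}\bigr)$.

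What your route buys:
\begin{itemize}
\item It is self-contained and gives an exact formula rather than just a sign. Non-negativity and the equality criterion $p^{e_2}\mid j$ are read off term by term.
\item The identification $f=e_2$ is the same elementary observation the paper makes in the proof of Proposition~\ref{which_cha_contri}. So the lemma no longer depends on an external computation or on the $r_m$/$R_m$ translation.
\item The unreduced identity $j\,(a(G)-z_j(G))=\bigl(\frac{j}{p^e}-1\bigr)+\sum_{i=1}^R\bigl(\lceil\frac{j}{p^{e_i}}\rceil-\frac{j}{p^{e_i}}\bigr)$ holds for every $j\geq 2$. For $j>p^e$ it gives $a(G)-z_j(G)\geq p^{-e}-j^{-1}\geq \frac{1}{p^e(p^e+1)}$. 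This is Lemma~\ref{LocationofPoleComputation}\textit{(3)} with an explicit uniform gap.
\end{itemize}

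The paper's route is shorter on the page, but it leaves the actual inequality to Lagemann.
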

\begin{proof}
    The exact statement from Lagemann's paper looks slightly different from here, as he was working with the $p^m-$rank of $G_w$, defined as  $R_m(G):= \log_p(p^{m-1}G_w:p^{m}G_w)$. These two statements are identical by observing that
\begin{align*}
&\sum_{m=1}^e\left(R_m(G)-R_{m+1}(G)\right)\left(j-1-\left\lfloor\frac{j-1}{p^m}\right\rfloor\right)
 =\sum_{m=1}^e\left( \left\lfloor\frac{j-1}{p^{m-1}}\right\rfloor - \left\lfloor\frac{j-1}{p^{m}}\right\rfloor\right)R_m(G),
\end{align*}
since $R_{e+1}(G) = 0$.
\end{proof}
\begin{lemma}\label{structure of Lambda}
Recall from Theorem~\ref{easyED} $a(G) = \frac{1+ \sum_{m=1}^er_m(G)(p^e- p^{e-m})}{p^e} = z_{p^e}(G)$, and the definition of $z_j(G)$ and $y_j(G)$ from Lemma \ref{LocationofPoleComputation}. Then $\Theta(k,G,W;s)$ from Proposition~\ref{prop_trivial_char_decomp} is absolutely convergent for $\Real(s) > a(G)$, and there exists $\epsilon > 0$ such that $\Theta(k,G,W;s)$ extends to a meromorphic function on $\Real(s) > a(G) - \epsilon$ with the following properties:
    \begin{enumerate}
        \item There exist n$ \in$ $\Z_{\geq 0}$ and real
    numbers  $0=\theta_0<\theta_1<\ldots<\theta_n<1$ such that, for $h \in\{0, \ldots, n\}$ and $m\in\Z$, the function $\Theta(k,G,W;s)$ has a pole of order $b_h\leq b(k,G)$ at $s_h+\frac{2 \pi i m}{\log q}$, where $s_h = a(G)+\frac{2 \pi i \theta_h}{\log q}$ and $b(k,G)$ is as in Theorem \ref{maintheorem}.
        \item $\Theta(k,G,W;s)$ is holomorphic for $$\Real(s)>a(G)-\epsilon, s \notin\left\{s_h+\frac{2 \pi i m}{\log q}: h \in\{0, \ldots, n\}, m \in \Z\right\}.$$
        \item Recall $D$ as defined in (\ref{fundamentalRegionD(shift this)}). The set $B(k,G):=\{s_h \in D: b_h = b(k,G)\}$ contains $s_0 =a(G)$ and is evenly spread: there exists $\gamma \in \Z_{\geq 1}$ such that $B(k,G) = \{a(G)+\frac{2 t \pi i }{\gamma \log q}: t\in\{0,\ldots,\gamma-1\}\}$.
        \end{enumerate}
\end{lemma}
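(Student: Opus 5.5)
We analyse $\Theta(k,G,W;s)$ factor by factor, using Lemma~\ref{lemma:value_zeta_function} for each $\zeta_{k(\mu_\alpha)}$ and the computations of Lemmas~\ref{LocationofPoleComputation} and~\ref{lagemanncalcualtion}. Write $q_\alpha=q^{[k(\mu_\alpha):k]}$ for the constant field size of $k(\mu_\alpha)$. Then $\zeta_{k(\mu_\alpha)}(w)$ is meromorphic on $\Real(w)>0$ and zero-free on $\Real(w)>\tfrac12$. Its only poles there are simple, at $w=1+2\pi i n/([k(\mu_\alpha):k]\log q)$.

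A factor $\zeta_{k(\mu_\alpha)}(js-c_j)$ with $c_j=\sum_m r_m(j-1-\lfloor\cdot\rfloor)$ has its poles on the line $\Real(s)=z_j(G)$. The factors with $\lceil\cdot\rceil$ have theirs on $\Real(s)=y_j(G)$. By Lemma~\ref{LocationofPoleComputation}\textit{(3)}, for $j>p^e$ we have $z_j(G)\le a(G)-\varepsilon_j$. Lemma~\ref{asympoticofTriviallocalft} gives a uniform margin $\xi$ for the tail, so the infinite products over $j>p^e$, after expanding $\log\zeta$, converge absolutely and are holomorphic and non-vanishing on $\Real(s)>a(G)-\epsilon$. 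This step is where absolute convergence for $\Real(s)>a(G)$ is checked, by comparing exponents as in Proposition~\ref{absconvergence:delta}.

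For $2\le j\le p^e$, Lemma~\ref{lagemanncalcualtion} shows that $z_j(G)=a(G)$ exactly when $p^f\mid j$, and otherwise $z_j(G)<a(G)$. Lemma~\ref{LocationofPoleComputation}\textit{(1),(2)} gives $y_j(G)<a(G)$ in the relevant range. Note also that $v_p(j-1)\le e-1$ automatically when $p^f\mid j$ with $j\ge 2$. For the first factor $\zeta_k(s)^{-1}\prod_\alpha\zeta_{k(\mu_\alpha)}(s)^{d(k,\alpha)}$, Lemma~\ref{LocationofPoleComputation}\textit{(4)} gives poles on $\Real(s)=1$, which equals $a(G)$ exactly when $G_w$ is cyclic. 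If $G_w$ is non-cyclic this line lies strictly left of $a(G)$.

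Choose $\epsilon$ smaller than all the gaps $a(G)-z_j$ and $a(G)-y_j$ over the finitely many $j\le p^e$ with $p^f\nmid j$, and smaller than $a(G)-1$ in the non-cyclic case. On $\Real(s)>a(G)-\epsilon$ the only singularities then come from the factors $\zeta_{k(\mu_\alpha)}(js-c_j)^{d(k,\alpha)}$ with $p^f\mid j$ (together with the first factor in the cyclic case). A factor with $p^f\mid j$ has poles at $s=a(G)+2\pi i n/(j[k(\mu_\alpha):k]\log q)$. Since the exponents $d(k,\alpha)$ need not be integers, some of these singularities are algebraic. I will treat them as poles of order $d(k,\alpha)$ in the sense of Definition~\ref{def:algebraic-singularity}, and I expect the paper intends the same.

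All singularities lie at $a(G)+2\pi i\theta/\log q$ with $\theta\in\tfrac1N\Z$, where $N=\operatorname{lcm}$ of the $j[k(\mu_\alpha):k]$. This gives the finite list $\theta_0=0<\dots<\theta_n<1$ and proves (1) and (2), modulo the orders. The order at $\theta$ is the sum of $d(k,\alpha)$ over the pairs $(j,\alpha)$ with $j[k(\mu_\alpha):k]\theta\in\Z$, with the $-1$ from $\zeta_k^{-1}$ added when $\theta=0$ in the cyclic case.

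At $\theta=0$ every pair contributes. The total is $\#\{j\le p^e: p^f\mid j\}\cdot\sum_\alpha d(k,\alpha)$, which equals $p^{e-f}\sum_\alpha d$ in the non-cyclic case. In the cyclic case ($f=0$) the $j=1$ term and the $\zeta_k^{-1}$ factor together give $p^e\sum_\alpha d-1$. So $b_0=b(k,G)$, and every other $b_h\le b_0$ since fewer pairs contribute.

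The main obstacle is (3): identifying the set of $\theta$ where all pairs contribute. Full order at $\theta$ requires $j[k(\mu_\alpha):k]\theta\in\Z$ for all $p^f\mid j\le p^e$ and all $\alpha$ with $d(k,\alpha)\ne0$. In the cyclic case the $-1$ must also be retained, and one checks it survives exactly when $\theta\in\Z$ via $\zeta_k^{-1}$, except when $k(\mu_\alpha)$ contributes $\zeta_k$-type poles. This condition says $\theta\in\tfrac1\gamma\Z$, where $\gamma$ is the gcd of all the relevant integers $j[k(\mu_\alpha):k]$. Such a set is a subgroup of $\tfrac1N\Z/\Z$, hence evenly spread.

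The last step is to compute $\gamma$ explicitly and match it with \eqref{Eq:formulaforB}. In the cyclic case $\gamma=\gcd(\{2,\dots,p^e\}\cup\{[k(\mu_\ell):k]\})$, where $j=1$ is excluded because of the cancellation with $\zeta_k^{-1}$. In the non-cyclic case the $j$ range over multiples of $p^f$ and the $\alpha=1$ term gives $[k:k]=1$, so $\gamma=p^f$. I expect the bookkeeping of which $\alpha$ truly enter, and the reduction to primes $\ell$, to be the delicate part.
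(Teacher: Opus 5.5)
Your proposal is correct and follows essentially the paper's own route. You locate the poles of each zeta factor on $\Real(s)=z_j(G)$ or $y_j(G)$, keep only the layers $2\le j\le p^e$ with $p^f\mid j$ (plus the tame factors when $G_w$ is cyclic) via Lemmas~\ref{LocationofPoleComputation} and~\ref{lagemanncalcualtion}, write the order at $s_\theta$ as the sum of $d(k,\alpha)$ over the pairs with $j[k(\mu_\alpha):k]\theta\in\Z$, and take $\gamma$ to be the gcd of these integers. The explicit value of $\gamma$ and the reduction to primes $\ell$ are only needed later, in Corollary~\ref{infoaboutpoles}, not for this lemma.

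Two clean-ups are worth making. First, $d(k,\alpha)$ is always an integer: $[k(\mu_\alpha):k]$ is the order of $q$ in $(\Z/\alpha\Z)^*$ and so divides $\phi(\alpha)$. Hence these are genuine poles, and your algebraic-singularity hedge is unnecessary. Second, handle $\zeta_k(s)^{-1}$ as the paper does, by cancelling it against $\zeta_k(s)^{d(k,1)}$ with $d(k,1)=1$ at the outset. Then exactly the pair $(1,1)$ disappears, not all of $j=1$, since the pairs $(1,\alpha)$ with $\alpha\neq 1$ stay in the gcd. Every remaining multiplicity is positive, so ``order $=b(k,G)$ iff every remaining pair contributes'' is immediate, replacing your unclear sentence about the $-1$ ``surviving''.
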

\begin{proof}
Let $j\geq 2$ and let $\alpha\mid|G_t|$ with $r(\alpha)>0$. By Lemma~\ref{lemma:value_zeta_function}, the poles from the Dedekind zeta function $\zeta_{k(\mu_\alpha)}\left(js-\sum_{m=1}^er_m(G)\left(j-1-\left\lfloor\frac{j-1}{p^m}\right\rfloor\right)\right)$ are located at $z_j(G)+\frac{2\pi i n}{j[k(\mu_\alpha):k]\log q}$ for $n\in\Z$, and are simple; its zeros are located on the vertical line of real part $z_j(G)-\frac{1}{2j}$, by the Riemann hypothesis for function fields. Now we observe that by Lemma~\ref{LocationofPoleComputation} \textit{(3)}, we only need to consider these zeta functions with $j\leq p^e$; note that every $2\leq j\leq p^e$ satisfies $v_p(j-1)\leq e-1$, so all these zeta functions occur in $\Theta(k,G,W;s)$.

Furthermore, we have that the poles from $\zeta_{k(\mu_\alpha)}\left(js-\sum_{m=1}^er_m(G)\left(j-1-\left\lceil\frac{j-1}{p^m}\right\rceil\right)\right)$ are located at $y_j(G)+\frac{2\pi i n}{j[k(\mu_\alpha):k]\log q}$ for $n\in\Z$, with zeros on the vertical line of real part $y_j(G)-\frac{1}{2j}$. As $y_j(G)\leq z_j(G)$ by Lemma~\ref{LocationofPoleComputation} \textit{(1)}, we can focus on these zeta functions with $j\leq p^e$ as well. Since $d(k,1)=1$, the factor $\zeta_k(s)^{d(k,1)}$ in the first product of $\Theta(k,G,W;s)$ cancels the factor $\zeta_k(s)^{-1}$; the remaining tame factors $\zeta_{k(\mu_\alpha)}(s)^{d(k,\alpha)}$, $\alpha\neq 1$, have simple poles at $1+\frac{2\pi in}{[k(\mu_\alpha):k]\log q}$ and zeros on the vertical line of real part $\frac{1}{2}$.

If $G_w$ is cyclic, we have that the pole with the largest real part among these zeta factors is located at $s = a(G)=1$, by Lemma~\ref{LocationofPoleComputation} \textit{(2)} and \textit{(4)}. Note that in this case, the poles from the tame factors $\zeta_{k(\mu_\alpha)}(s)$ are also located along $\Real(s)=a(G)$. If $G_w$ is not cyclic, we have $a(G)>1$ by Lemma~\ref{LocationofPoleComputation} \textit{(4)}, so the tame factors do not reach the line $\Real(s)=a(G)$.

To figure out which zeta functions have their poles with the largest real part, we apply Lemma~\ref{lagemanncalcualtion}. We conclude that the factors that contribute towards the order of the poles with the real part $a(G)$ are $\zeta_{k(\mu_\alpha)}\left(js-\sum_{m=1}^er_m(G)\left(j-1-\left\lfloor\frac{j-1}{p^m}\right\rfloor\right)\right)$ with $p^f\mid j$, together with, when $G_w$ is cyclic, the tame factors $\zeta_{k(\mu_\alpha)}(s)$ with $\alpha\neq 1$, and there is no cancellation between zeros and poles: by Lemma~\ref{LocationofPoleComputation} \textit{(2)}, the poles of the factors with exponent $-d(k,\alpha)$ satisfy $y_j(G)<z_j(G)=a(G)$ for $p^f\mid j$ and $y_j(G)\leq z_j(G)<a(G)$ otherwise, and the zeros of all factors lie on vertical lines of real part strictly smaller than $a(G)$ by the locations recorded above. Parts \textit{(1)} and \textit{(2)} follow for $\epsilon$ small enough.

We now prove \textit{(3)}. Regard the tame factor $\zeta_{k(\mu_\alpha)}(s)^{d(k,\alpha)}$ as the case $j=1$ of the contributing factors, so that these are indexed by the pairs $(j,\alpha)$ with $2\leq j\leq p^e$ and $p^f\mid j$, together with the pairs $(1,\alpha)$ with $\alpha\neq 1$ when $G_w$ is cyclic. Write $s_\theta=a(G)+\frac{2\pi i\theta}{\log q}$ with $\theta\in[0,1)$. The factor indexed by $(j,\alpha)$ has a pole at $s_\theta$ if and only if $j[k(\mu_\alpha):k]\,\theta\in\Z$, so the order of the pole of $\Theta(k,G,W;s)$ at $s_\theta$ equals the sum of $d(k,\alpha)$ over the pairs with $j[k(\mu_\alpha):k]\,\theta\in\Z$. For $\theta=0$ every pair contributes, and the order equals $\sum_{\alpha\neq 1}d(k,\alpha)+(p^e-1)\sum_{\alpha}d(k,\alpha)=p^e\sum_{\alpha}d(k,\alpha)-1=b(k,G)$ when $G_w$ is cyclic, using $d(k,1)=1$, and $p^{e-f}\sum_{\alpha}d(k,\alpha)=b(k,G)$ otherwise; since the summands are positive, the order at any $s_\theta$ is at most $b(k,G)$, with equality if and only if $j[k(\mu_\alpha):k]\,\theta\in\Z$ for every pair. Let $\gamma$ be the greatest common divisor of the integers $j[k(\mu_\alpha):k]$ over all pairs; then the latter holds if and only if $\gamma\theta\in\Z$, since $\gamma$ divides each $j[k(\mu_\alpha):k]$ and is an integral combination of them. This proves \textit{(3)}, and the bound $b_h\leq b(k,G)$ in \textit{(1)}.
\end{proof}
Recall that $B(k,G)$ denotes the set of poles of maximal order $b(k,G)$ in the region $D$. The preceding calculation yields the following results.
\begin{corollary}\label{infoaboutpoles}
According to the proof of Lemma~\ref{structure of Lambda}, we conclude that $\Theta(k,G,W;s)$ has one of the following properties:
\begin{enumerate}
    \item If $G_w$ is cyclic, there exists a function $D(k,G,W;s)$ and $\epsilon>0$ such that $D(k,G,W;s)$ is holomorphic for $\Real(s)>a(G)-\epsilon$ with
    \begin{equation*}
    \Theta(k,G,W;s) = D(k,G,W;s)\zeta_k(s)^{-1}\prod_{\alpha \mid |G_t|}\zeta_{k(\mu_\alpha)}(s)^{d(k,\alpha)}\prod^{p^e}_{j=2}\prod_{\alpha\,\mid \, |G_t|}\zeta_{k(\mu_\alpha)}\left(js-(j-1)\right)^{d(k,\alpha)}.
    \end{equation*}
 
    Furthermore, let $\gamma = \gcd\bigl(\{j : 2\leq j \leq p^e\}\cup\{[k(\mu_\alpha):k] : 1\neq\alpha \mid |G_t|,\ r(\alpha)>0\}\bigr)$; we have $B(k,G) = \{a(G)+\frac{2t\pi i}{\gamma\log q} : t\in\{0,\ldots,\gamma-1\}\}$. In particular, $|B(k,G)| = 1$ whenever $p^e >2$.
    \item If $G_w$ is not cyclic, let $1\leq f\leq e$ be the smallest integer such that $p^fG_w$ is cyclic, then there exists a function $D(k,G,W;s)$ and $\epsilon>0$, such that $D(k,G,W;s)$ is holomorphic for $\Real(s)>a(G)-\epsilon$ with
    \begin{equation*}
    \Theta(k,G,W;s) = D(k,G,W;s)\prod^{p^e}_{\substack{j=2\\p^f\mid j}}\prod_{\alpha\,\mid \, |G_t|}\zeta_{k(\mu_\alpha)}\left(js-\sum_{m=1}^er_m(G)\left(j-1-\left\lfloor\frac{j-1}{p^m}\right\rfloor\right)\right)^{d(k,\alpha)}.
    \end{equation*}
    Furthermore, let $\gamma = \gcd(j[k(\mu_\alpha):k] : 2\leq j\leq p^e,\ p^f\mid j,\ \alpha\mid|G_t|,\ r(\alpha)>0) = p^f$. We have $B(k,G) = \{a(G)+\frac{2t\pi i}{\gamma\log q} : t\in\{0,\ldots,\gamma-1\}\}$ with $|B(k,G)|=p^f$; for $f=e$ this value equals $p^e$.
\end{enumerate}
 This proves formula \eqref{Eq:formulaforB}, concerning the size of $B(k,G)$.
\end{corollary}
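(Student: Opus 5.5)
The plan is to read off both parts directly from the proof of Lemma~\ref{structure of Lambda}. Starting from the definition of $\Theta(k,G,W;s)$ in Proposition~\ref{prop_trivial_char_decomp}, I split the factors into \emph{contributing} factors, namely those with a pole on the line $\Real(s)=a(G)$, and the rest. The rest are either zeta factors whose poles lie at real part $z_j(G)<a(G)$ or $y_j(G)<a(G)$, or inverse zeta factors $\zeta_{k(\mu_\alpha)}(\cdots)^{-d(k,\alpha)}$. By Lemma~\ref{LocationofPoleComputation} and the Riemann hypothesis (Lemma~\ref{lemma:value_zeta_function}), each non-contributing factor has poles and zeros on vertical lines strictly left of $a(G)$. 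For the infinitely many factors with $j>p^e$, I use Lemma~\ref{LocationofPoleComputation}\textit{(3)} together with the absolute convergence argument of Lemma~\ref{asympoticofTriviallocalft}. This lets the product of all non-contributing factors be absorbed into a function $D(k,G,W;s)$, holomorphic and non-vanishing on $\Real(s)>a(G)-\epsilon$ for small $\epsilon$. By Lemma~\ref{lagemanncalcualtion}, the contributing factors are exactly the $\zeta_{k(\mu_\alpha)}$ terms with $z_j(G)=a(G)$, i.e.\ $2\le j\le p^e$ with $p^f\mid j$, together with the tame factors $\zeta_{k(\mu_\alpha)}(s)$ when $G_w$ is cyclic.

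In the cyclic case, $f=0$ and $r_m=0$ except $r_e=1$, so for $j\le p^e$ we have $\lfloor (j-1)/p^e\rfloor=0$ and the argument simplifies to $js-(j-1)$. Then $a(G)=1$, and the tame factors $\zeta_{k(\mu_\alpha)}(s)^{d(k,\alpha)}$ together with the $\zeta_k(s)^{-1}$ are retained, giving the displayed product. In the non-cyclic case, $a(G)>1$ by Lemma~\ref{LocationofPoleComputation}\textit{(4)}. The tame factors and $\zeta_k^{-1}$ then have poles or zeros only at real part $1$ or $\tfrac12$, both strictly less than $a(G)$, so they are absorbed into $D$. Only the $j$ with $p^f\mid j$ remain.

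For the description of $B(k,G)$, I use part \textit{(3)} of Lemma~\ref{structure of Lambda}: $B(k,G)$ consists of the points $a(G)+2\pi i t/(\gamma\log q)$, where $\gamma$ is the gcd of $j[k(\mu_\alpha):k]$ over all contributing pairs $(j,\alpha)$ with $r(\alpha)>0$, and tame pairs are entered with $j=1$.

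\textbf{Cyclic case.} The pairs $(j,1)$ for $2\le j\le p^e$ contribute the integers $j$. The tame pairs $(1,\alpha)$ with $\alpha\ne1$ contribute $[k(\mu_\alpha):k]$. Any $j[k(\mu_\alpha):k]$ with $j\ge2$ is a multiple of $j$ and so adds nothing to the gcd, which gives the stated $\gamma$. If $p^e>2$, then both $2$ and $3$ lie in $\{2,\dots,p^e\}$, so $\gamma=1$.

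\textbf{Non-cyclic case.} Every contributing $j$ is divisible by $p^f$, so $p^f\mid\gamma$. Conversely, the pair $(j,\alpha)=(p^f,1)$ is allowed since $r(1)=1$ and $[k:k]=1$; this requires $p^f\ge2$, i.e.\ $f\ge1$, which holds because $G_w$ is non-cyclic. That pair contributes exactly $p^f$, so $\gamma=p^f$ and $|B(k,G)|=p^f$.

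The final step reconciles $\gamma$ with \eqref{Eq:formulaforB}, which indexes over primes $\ell\mid|G_t|$ rather than all divisors $\alpha$ with $r(\alpha)>0$. Every $\alpha\mid|G_t|$ with $r(\alpha)>0$ is a product of primes dividing $|G_t|$, each of which has $r(\ell)>0$. Moreover $[k(\mu_\ell):k]\mid[k(\mu_\alpha):k]$ for $\ell\mid\alpha$, so the gcd over all such $\alpha$ equals the gcd over primes $\ell$. I expect this to be the only delicate point.

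The main obstacle is justifying the absorption into $D$ of the infinite products over $j>p^e$ and of the inverse zeta factors. The inverse factors are the subtler part: I must check that their zeros, at real part $y_j-\tfrac1{2j}$, and their poles, at real part $y_j<a(G)$ by Lemma~\ref{LocationofPoleComputation}\textit{(2)}, stay uniformly left of $a(G)$. This is what makes a single $\epsilon>0$ work.
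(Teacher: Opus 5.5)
Your proposal is correct and follows essentially the same route as the paper. In both, the factorizations are read off from the definition of $\Theta(k,G,W;s)$, with the non-contributing factors collected into $D(k,G,W;s)$ by the proof of Lemma~\ref{structure of Lambda}; $B(k,G)$ is then obtained from Lemma~\ref{structure of Lambda}\textit{(3)} by the same gcd arguments (the pairs $(j,1)$ and $(1,\alpha)$ in the cyclic case, the pair $(p^f,1)$ in the non-cyclic case) and the same reduction from divisors $\alpha$ to primes $\ell$.
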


\begin{proof}
The factorizations restate the definition of $\Theta(k,G,W;s)$ from Proposition~\ref{prop_trivial_char_decomp}: when $G_w$ is cyclic the shift of the layer $j$ equals $j-1-\left\lfloor\frac{j-1}{p^e}\right\rfloor=j-1$ for $2\leq j\leq p^e$, and in both cases $D(k,G,W;s)$ collects the remaining factors, which is holomorphic for $\Real(s)>a(G)-\epsilon$ since, by the proof of Lemma \ref{structure of Lambda}, the poles of the remaining factors lie on vertical lines with real parts bounded away from $a(G)$. The description of $B(k,G)$ is Lemma \ref{structure of Lambda} \textit{(3)}, where $\gamma$ is the greatest common divisor of the integers $j[k(\mu_\alpha):k]$ over the contributing pairs. In case \textit{(1)}, this equals the displayed value: the pairs with $\alpha=1$ contribute the integers $j$, the pairs with $j=1$ contribute the integers $[k(\mu_\alpha):k]$ with $\alpha\neq 1$, and every remaining product $j[k(\mu_\alpha):k]$ is divisible by the greatest common divisor of these; when $p^e>2$ the first set contains $2$ and $3$, whence $|B(k,G)|=1$. In case \textit{(2)}, the pair $j=p^f$, $\alpha=1$ contributes $p^f$, and $p^f$ divides $j[k(\mu_\alpha):k]$ for every pair, whence $\gamma=p^f$. Finally, every $1\neq\alpha$ with $r(\alpha)>0$ has a prime divisor $\ell$ with $[k(\mu_\ell):k]$ dividing $[k(\mu_\alpha):k]$, and every prime $\ell$ dividing $|G_t|$ is the order of an element of $G_t$; hence in case \textit{(1)} the greatest common divisor is unchanged when the second set is replaced by $\{[k(\mu_\ell):k] : \ell \text{ prime},\ \ell\mid|G_t|\}$, and \eqref{Eq:formulaforB} follows.
\end{proof}

\section{Calculation of the leading constant}

Now we are in a position to apply a Tauberian theorem to  $F_G(s)$, the conductor series of $G$-extensions
\begin{equation}\label{Dirichlet_series_F}
	F_G(s) = \sum_{M=0}^\infty \frac{ N(k,G,W,q^M)}{q^{Ms}},
\end{equation}
where $ N(k,G,W,q^M) := {\#\left\{\varphi \in  G-\operatorname{ext}(k) : 
    \varphi \in W,  
     \Phi(\varphi) = q^{M}
    \right\}}. $

The Tauberian theorem for function fields can be derived from the Tauberian theorem for power series in \cite[Thm.~11.2]{Odlyzko1995}.  By \eqref{removesubgroups} and Proposition~\ref{which_cha_contri}\textit{(2)}, the terms with $H\lneq G$ are holomorphic in a neighbourhood of the line $\Real(s)=a(G)$ or have singularities there of order strictly smaller than $b(k,G)$; hence the leading Laurent coefficients of $F_{G,W}(s)$ at the points of $B(k,G)$ coincide with those of $\widetilde F_{G,W}(s)$, and we compute with the latter. Applying the Tauberian theorem to $F_G(s),$ we have
\begin{align*}
N(k,G,W,q^M) \sim \frac{\log (q)^b}{(b-1)!} \left(\sum_{k=0}^{|B(k,G)|-1} e^{2 \pi i k M / |B(k,G)|} r_k \right)M^{b-1}q^{a M}, \\ \text { as } M \rightarrow \infty, M \in \mathbb{Z}, 
\end{align*}
with
$
r_h:= \lim _{s \rightarrow s_h}\left(s-s_h\right)^b F_G(s)
$. Note that $s_h \in B(k,G)$ are evenly spread by Corollary \ref{infoaboutpoles}. 
Note that $\sum_{k=0}^{|B(k,G)|-1} e^{2 \pi i k M / |B(k,G)|} r_h $ could be zero when multiple poles are involved, since cancellation can occur. For example, if we apply the Tauberian theorem to $\zeta_{\F_3(t)(\mu_2)}(s)$, all the terms corresponding to odd exponents will vanish. Therefore, it only makes sense to talk about the asymptotic behavior for those even exponents. One way to take this into account is by observing this cancellation will not happen over all arithmetic progressions modulo $|B(k,G)|$, otherwise, this would contradict the fact that there is a pole located at $s = a(G)$. In this way,  we may deduce that there exists some $N_0$, with
\begin{align*}
    N(k,G,W,q^M) \sim\frac{\log (q)^b}{(b-1)!}\left(\sum_{k=0}^{|B(k,G)|-1} e^{2 \pi i k N_0 / |B(k,G)|} r_h \right)M^{b-1} q^{a M},\\ \text { as } M \rightarrow \infty, \quad M \equiv N_0(\bmod N).
\end{align*}

This method of handling the potential cancellation was used by Lagemann in his paper \cite[Thm.~1.2]{Lagemann_2012} and \cite[Thm.~1.2]{Lagemann_2015}. As there are multiple poles to consider, it is hard to calculate the leading constant explicitly, and Lagemann did not perform this calculation in his later paper \cite[\S1]{Lagemann_2015},  when treating the general purely wild case.

Another method, proposed by Wright in \cite[Thm.~1.3]{10.1112/plms/s3-58.1.17}, is to consider a weighted average of $N(k,G,W,q^M)$:
\begin{equation*}
    N_{av}(k,G,W,q^M):=\frac{1}{|B(k,G)|}\sum_{j = 0}^{|B(k,G)|-1} \frac{N\left(k, G, W,q^{M+j}\right)}{q^{a j}}.
\end{equation*}
By doing this, we can cancel the contribution of poles from $B(k,G)$ that are not located at $a(G)$: 
$$
N_{av}(k,G,W,q^M)\sim\left(\frac{\log (q)^{b(k,G)}}{(b(k,G)-1)!} (r_0)\right)  q^{ a(G)M}M^{b(k,G)-1}, \quad \text { as } M \rightarrow \infty \text { and } M \in\Z,
$$
where $r_0 = \lim_{s \to a}(s-a)^{b(k,G)}F_G(s).$

Now we are ready to calculate the leading constant. We first focus on the case where $G_w$ is not cyclic. By Corollary \ref{infoaboutpoles}, we have 
\begin{equation*}
            \Theta(k,G,W;s) = D(k,G,W;s)\prod^{p^e}_{\substack{j=2\\p^f \mid j}}\prod_{\alpha\,\mid \, |G_t|}\zeta_{k(\mu_\alpha)}\left(js-\sum_{m=1}^e\left(j-1-\left\lfloor\frac{j-1}{p^m}\right\rfloor\right)\right)^{d(k,\alpha)} .
        \end{equation*}
 Therefore, together with Proposition~\ref{PossionS:the prop}, we have that 
\begin{align*}
    \frac{|G|}{|\F_q^{^*}\otimes G|}D(k,G,W;a) &= \lim_{s \to a}F_G(s)\prod^{p^e}_{\substack{j=2\\v_p(j-1)\leq e-1\\p^f\mid j}}\prod_{\alpha\,\mid \, |G_t|}\zeta_{k(\mu_\alpha)}\left(js-\sum_{m=1}^e\left(j-1-\left\lfloor\frac{j-1}{p^m}\right\rfloor\right)\right)^{-d(k,\alpha)} 
    \\&=\lim_{s \to a}(s-a)^bF_G(s) \\ &\times\prod^{p^e}_{\substack{j=2\\v_p(j-1)\leq e-1\\p^f \mid j}}\prod_{\alpha\,\mid \, |G_t|}\frac{\zeta_{k(\mu_\alpha)}\left(js-\sum_{m=1}^e\left(j-1-\left\lfloor\frac{j-1}{p^m}\right\rfloor\right)\right)^{-d(k,\alpha)} }{(s-a)^{d(k,\alpha)}}
    \\&= \lim_{s \to a}(s-a)^bF_G(s) \\ &\times\prod^{p^e}_{\substack{j=2\\v_p(j-1)\leq e-1\\p^f\mid j}}\prod_{\alpha\,\mid \, |G_t|}\left(\operatorname{Res}_{s = a}\zeta_{k(\mu_\alpha)}\left(js-\sum_{m=1}^e\left(j-1-\left\lfloor\frac{j-1}{p^m}\right\rfloor\right)\right)\right)^{-d(k,\alpha)} .
\end{align*}
Hence, 
\begin{align*}
    r_0 &=\frac{|G|}{|\F_q^{^*}\otimes G|} D(k,G,W;a)\prod^{p^e}_{\substack{j=2\\p^f\mid j}}\prod_{\alpha\,\mid \, |G_t|}\left(\operatorname{Res}_{s = a}\zeta_{k(\mu_\alpha)}\left(js-\sum_{m=1}^e\left(j-1-\left\lfloor\frac{j-1}{p^m}\right\rfloor\right)\right)\right)^{d(k,\alpha)}
\\&= \frac{|G|}{|\F_q^{^*}\otimes G|}D(k,G,W;a)\left(\prod_{\substack{1\leq j \leq p^e \\ p^f\mid j }}\frac{1}{j}\right)^{\left( \sum_{\alpha\,\mid \, |G_t|}d(k,\alpha)\right)}
\left(\prod_{\alpha\,\mid \, |G_t|}\left(\operatorname{Res}_{s = 1}\zeta_{k(\mu_\alpha)}\left(s\right)\right)^{d(k,\alpha)}\right)^{p^{e-f}}.
\end{align*}
Here the second equality is due to  the fact that $\operatorname{Res}_{s = a}\zeta_{k(\mu_\alpha)}\left(js-\sum_{m=1}^e\left(j-1-\left\lfloor\frac{j-1}{p^m}\right\rfloor\right)\right)$ $=\frac{1}{j}\operatorname{Res}_{s = 1}\zeta_{k(\mu_\alpha)}\left(s\right)$.
By \cite[Chapter VIII, \S 3]{cassels1967algebraic}, we have $\zeta_{L}(s) = \mathrm{L}(\operatorname{Ind}_{k}^L(\CC),s)$. Therefore, we may rewrite our expression of the leading constant using Artin L-functions in Theorem \ref{maintheorem}.

By Corollary \ref{infoaboutpoles}, $D(k,G,W;s)$ is holomorphic for $\Real(s) \geq a(G)- \epsilon$.  Therefore, we may calculate its value at $s=a(G)$, according to Proposition~\ref{prop_trivial_char_decomp}, after decomposing $\Theta(k,G,W;s)$ into an Euler product, we have
\begin{align*}
D(k,G, W;a) &= \prod_{v \in S}\frac{\lambda_v^{-1}}{|G|\Phi_v(\psi_v)^a}\prod_{v \notin S}\left( \frac{1}{|G|}\sum_{\chi_v \in \operatorname{Hom}(k_v^*,G)} \frac{1}{\Phi(\chi_v)^a}\right)\lambda_v^{-1},
\end{align*}
where $\lambda_v$ is defined in (\ref{GaloisModule}). 

For the cyclic wild case, we follow the same procedure. A similar calculation shows that we may use the formula obtained above by taking $f = 0.$ 
Together with Proposition \ref{PossionS:the prop}, we complete the calculation of the leading constant appearing in Theorem~\ref{maintheorem}. Hence the proof of Theorem~\ref{maintheorem}.

\section{Equidistribution for $BG$ } 
This section provides the geometric background for Theorem~\ref{ED}. We define the Tamagawa measure and prove Theorem~\ref{ED}.

\subsection{The groupoid $BG(k)$ and field extensions}
Let $G$ be a constant abelian group scheme over a global field $k$.  According to \cite[Lemma~2.2]{loughran2025mallesconjecturebrauergroups}, the groupoid $B G(k)$ can be viewed as homomorphisms $\Gamma_k \rightarrow G$, with isomorphisms given by conjugations in $G$.

Let $W_v \subseteq BG[k_v],$recall our definition of $a(G)$ in Theorem~\ref{easyED} and definition of $b(k,G)$ in Theorem~\ref{maintheorem}. We define the local Tamagawa measure modelled on \cite[Def.~8.7]{loughran2025mallesconjecturebrauergroups}
 $$\tau_{\Phi, v}(W_v):= \sum_{\psi_v \in [W_v]} \frac{1}{|\operatorname{Aut}(\varphi_v)|\Phi(\varphi_v)^a} = \frac{1}{|G|}\sum_{\psi_v \in [W_v]} \frac{1}{\Phi(\varphi_v)^a} ,$$
 where the last equality is due to the fact that the automorphisms are given by conjugations since  $G$ is abelian. In the setting of \cite{santens2026leadingconstantmallesconjecture}, the sum over $[W_v]$ was finite, so convergence was clear. For us, the convergence is not immediatte and follows from Proposition~\ref{absconvergence:delta}. Recall our definition of $\mathrm{L}(M_G, s)$ from (\ref{GaloisModule}) and
$
\lambda_v=\mathrm{L}_v(M_G, 1).
$ 
Then we can define the global Tamagawa measure
$$
\tau_\Phi=\mathrm{L}^*(M_G, 1) \prod_v \lambda_v^{-1} \tau_{\Phi, v}.
$$
Here $\mathrm{L}^*(M_G, 1):=\lim _{s \rightarrow 1}(s-1)^{b(k, G)} \mathrm{L}(M_G, s)$, which is non-zero as $b(k, G)$ is the order of pole at $s=1$. Write $\hat{G} := \operatorname{Hom}(G,\mathbb{G}_m)$ and by tensor-hom adjunction formula, we have that $|\hat{G(k)}| = \left|\F_q^* \otimes G^{\wedge}\right| $. Now, we can rewrite our asymptotic formula in Theorem \ref{maintheorem} with the leading constant as 
\begin{equation}\label{finalresultofcounting}
   \frac{1}{|G|}N_{av}(k,G,W,q^M) \sim \left(\prod_{\substack{1\leq j \leq p^e \\ p^f\mid j }}\frac{1}{j}\right)^{\left( \sum_{\alpha\,\mid \, |G_t|}d(k,\alpha)\right)}\frac{\tau_\Phi(\prod_vBG[k_v])\log(q)^b}{(b-1)!  |\hat{G}(k)|}q^{aM}M^{b-1}.
\end{equation}
\subsubsection{Comparison of the result}
In \cite[Conjecture~9.1]{loughran2025mallesconjecturebrauergroups}, the authors applied Peyre's formalism for the leading constant in Manin’s conjecture \cite{Peyre} to obtain a prediction for the leading constant in Malle’s conjecture. They predict that if $G$ is tame, then (\ref{finalresultofcounting}) should be
$$
\frac{(\log q)^{b(k,G)} \cdot \left|\operatorname{Br}_{un} B G / \operatorname{Br} k\right| \cdot \tau_\Phi\left(\prod_vBG[k_v]\right)}{\# \widehat{G}(k)(b(k, G)-1)!},
$$
when counting by conductor. The $(\log q)^{b(G)}$ factor comes from the Tauberian theorem we have used. As for $\operatorname{Br}_{un}(BG)$, see \cite[\S 6]{loughran2025mallesconjecturebrauergroups}. Here, we notice that $\operatorname{Br}_{un}BG/\operatorname{Br}k \cong \Sh_{w}^{1}(k,\hat{G})$, by \cite[Lemma~10.21]{loughran2025mallesconjecturebrauergroups}.
However, by Grunwald-Wang \cite[Thm.~9.1.11]{CohoNF}, when $G$ is purely tame, $\Sh_{w}^{1}(k,\hat{G}) = 0$. Therefore, we observe that our result agrees with their conjecture.

When $G$ has a non-trivial wild part, there is no prediction due to the absence of the effective cone and $\operatorname{Br}_{un}BG$, which requires $G$ to be tame. Therefore $\left(\prod_{\substack{1\leq j \leq p^e \\ p^f\mid j }}\frac{1}{j}\right)^{\left( \sum_{\alpha\,\mid \, |G_t|}d(k,\alpha)\right)}$  is a rational number that suggests the shape of the effective cone constant in the wild case.
\subsection{Proof of Theorem \ref{easyED}} For $ \psi_v \in \Hom (\Gamma_{k_v},G)$ with $v \in S$, let $W = \prod_{v \in S}\{\psi_v\} \times \prod_{v \notin S}\Hom(\Gamma_{k_v},G)$. Applying Theorem \ref{maintheorem} yields
\begin{align*}
        \lim_{M \to \infty}\frac{N_{av}(k,G,W,q^M)}{N_{av}(k,G,q^M)} &= \frac{\prod_{v \in S}\frac{\lambda_v^{-1}}{|G|\Phi_v(\chi_v)^a}\prod_{v \notin \Omega_k}\left( \frac{1}{|G|}\sum_{\chi_v \in \operatorname{Hom}(k_v ^*,G)} \frac{1}{\Phi(\chi_v)^a}\right)\lambda_v^{-1}}{\prod_{v \in \Omega_k}\left(\frac{1}{|G|} \sum_{\chi_v \in \operatorname{Hom}(k_v^*,G)} \frac{1}{\Phi(\chi_v)^a}\right)\lambda_v^{-1}}\\ &=\frac{\tau_\Phi(W)}{\tau_\Phi(\prod_vBG[k_v])}. 
\end{align*}
Here the last equality relies on $\Hom(k_v^*,G)\cong \Hom(\Gamma_{k_v},G)$ via the local Artin map.   \quad \quad \qed
\subsection{Proof of Theorem \ref{ED}}The following proof is adapted from the proof of \cite[Prop.~9.11]{loughran2025mallesconjecturebrauergroups}.
We start with the lower bound. As $W$ and its interior has the same measure, we are free to replace $W$ by its interior, so we may assume that $W$ is open. 
By \cite[Proposition 3.5]{Cesnavicius2015}, the topology on $H^1(k_v,G)$ is discrete, and hence $BG[k_v]$ is discrete by the correspondence given in \cite[Lemma 2.2]{loughran2025mallesconjecturebrauergroups}.

Since the topology here is the product topology, we observe that $W$ can be written as a union of basic open sets of the form $\prod_{v \in S}\left\{\psi_v\right\} \times \prod_{v \notin S} B G\left[k_v\right]$ for some varying finite sets $S \subseteq \Omega_k$ and some $\psi_v. $ We claim that actually $W$ can be written as a disjoint union of basic open sets.

Indeed, pick $(\varphi_v)_v \in W$. As $W$ is open, there is a basic open set $W_{\varphi_v} \subseteq W$ that contains $(\varphi_v)_v \in W$. As the topology on $B G\left[k_v\right]$ is discrete,  the basic open sets are also closed. Now, we repeat the same procedure with the open set $W-W_{\varphi_v},$ which proves the claim.

The total measure of $W$ is finite, and we may write $W$ as the disjoint union of sets of basic open sets. Therefore, for any $\epsilon > 0, $ there exists finitely many  $W_j$ with $j \in J_\epsilon$ such that $\sum_{j \in J_\epsilon}\tau_\Phi(W_j)  \geq \tau_\Phi(W) - \epsilon $.
Therefore,
\begin{align*}
     \liminf_{M \to \infty} \frac{N_{av}(k,G,W,q^M)}{N_{av}(k,G,q^M)} &\geq \sum_{j \in J_\epsilon}\liminf_{M \to \infty} \frac{N_{av}(k,G,W_j,q^M)}{N_{av}(k,G,q^M)} 
     \\& =\sum_{j \in J_\epsilon} \frac{\tau_\Phi(W_j) }{\tau_\Phi(\prod_vBG[k_v])}
 \geq \frac{\tau_\Phi(W) - \epsilon}{\tau_\Phi(\prod_vBG[k_v])},
\end{align*}
where the equality on the second line is due to Theorem \ref{easyED}.
The required lower bound is obtained by taking $\epsilon \to 0.$ As the complement of $W$ is also a continuity set, we have the same upper bound by applying the lower bound to the complement of $W$.\quad \quad\quad\quad\quad\quad\qed 
\bibliographystyle{amsalpha}{}
\bibliography{Bibfile}
\end{document}